\DeclareMathOperator{\Gal}{Gal}
\DeclareMathOperator{\rank}{Rank}
\DeclareMathOperator{\sing}{Sing}
\DeclareMathOperator{\Swan}{Swan}
\DeclareMathOperator{\geom}{geom}
\DeclareMathOperator{\Fr}{Fr}
\DeclareMathOperator{\FT}{FT}
\DeclareMathOperator{\Tr}{Tr}
\DeclareMathOperator{\Kl}{Kl}
\theoremstyle{plain}
\newtheorem{thm}{Theorem}
\newtheorem{cor}[thm]{Corollary}
\newtheorem{lem}[thm]{Lemma}
\newtheorem{prop}[thm]{Proposition}
\theoremstyle{definition}
\newtheorem{defn}{Definition}
\newtheorem{notat}{Notation}
\theoremstyle{remark}
\newtheorem{oss}{\textbf{Remark}}
\begin{document}

\title{A Polynomial Sieve and Sums of Deligne Type}
\author{Dante Bonolis%
  \thanks{correspondence address: \textit{dante.bonolis@math.ethz.ch}.}}
\date{}
\maketitle

\begin{abstract}
Let $f\in\mathbb{Z}[T]$ be any polynomial of degree $d>1$ and $F\in\mathbb{Z}[X_{0},...,X_{n}]$ an irreducible homogeneous polynomial of degree $e>1$ such that the projective hypersurface $V(F)$ is smooth. In this paper we present a new bound for $N(f,F,B):=|\{\textbf{x}\in\mathbb{Z}^{n+1}:\max_{0\leq i\leq n}|x_{i}|\leq B,\exists t\in\mathbb{Z}\text{ such that }f(t)=F(\textbf{x})\}|.$
To do this, we introduce a generalization of the power sieve (\cite{HB84}, \cite{Mun09}) and we extend two results by Deligne and Katz on estimates for additive and multiplicative characters in many variables.
\end{abstract}

\section{Introduction}
A fundamental role in Analytic Number Theory is played by the combination of sieve methods and bounds for sums involving algebraic functions as the additive and multiplicative characters. In this paper we present a blend of this type, introducing a generalization of the following results
\begin{itemize}
\item[$(i)$] The square sieve, or the more general power sieve of Heath-Brown and Munshi (\cite{HB84}, \cite{Mun09}),
\item[$(ii)$] Deligne's and Katz's estimates for additive and multiplicative characters in many variables (\cite{Del74}, \cite{Kat99} and \cite{Kat02a}),
\end{itemize}
and combining them to establish the following:
\begin{thm}
Let $f\in\mathbb{Z}[T]$ be any polynomial of degree $d>1$ and $F\in\mathbb{Z}[X_{0},...,X_{n}]$ an irreducible homogeneous polynomial of degree $e>1$ such that the projective hypersurface $V(F)$ is smooth. We denote
\[
N(f,F,B):=|\{\textbf{x}\in\mathbb{Z}^{n+1}:\max_{0\leq i\leq n}|x_{i}|\leq B,\exists t\in\mathbb{Z}\text{ such that }f(t)=F(\textbf{x})\}|.
\]
Then we have
\[
N(f,F,B)\ll_{d,e,n,\|f\|,\|F\|} B^{n+\frac{1}{n+2}}\log B^{\frac{n+1}{n+2}}.
\]
Where $\|f\|$, $\|F\|$ are the heights of $f$ and $F$ respectively, i.e. if $f=a_{n}T^{n}+...+a_{0}$, $\|f\|:=\max\{|a_{n}|,..., |a_{n}|\}$ and similarly for $\|F\|$.
\label{thm : box}
\end{thm}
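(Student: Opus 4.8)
The plan is to marry the polynomial sieve with square‑root cancellation for a new family of multi‑variable exponential sums attached to the pair $(f,F)$. First I replace the sharp box by a smooth majorant $w$ of $\mathbf 1_{[-B,B]^{n+1}}$, supported in $[-2B,2B]^{n+1}$ with $\partial^{\alpha}w\ll_{\alpha}B^{-|\alpha|}$, so that $N(f,F,B)\le\sum_{\mathbf x\in\mathbb Z^{n+1}}w(\mathbf x)\,\mathbf 1[F(\mathbf x)\in f(\mathbb Z)]$; the passage back to the box is routine. The $\mathbf x$ which are degenerate — those on $V(F)$, or with $F(\mathbf x)$ equal to one of the $O_{d}(1)$ critical values of $f$ — contribute $\ll_{n,e,d}B^{n}$, far below the target, and may be discarded. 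Fix a parameter $Q\ge 2$ and let $\mathcal P$ be the set of primes $p\in(Q,2Q]$ not dividing a fixed integer $\Delta=\Delta(f,F)$ (a multiple of $e$, of $\mathrm{disc}(f)$, and of the leading coefficients), so $|\mathcal P|\asymp Q/\log Q$. The polynomial sieve we develop — whose detecting step uses $\nu_{f}(F(\mathbf x);p)\ge 1$ for every admissible $\mathbf x$ together with a lower bound $\sum_{p\in\mathcal P}\bigl(\nu_{f}(F(\mathbf x);p)-1\bigr)\gg_{d}|\mathcal P|$ uniform in $\mathbf x$ (which, through the auxiliary polynomials $\bigl(f(T)-f(t_{0})\bigr)/(T-t_{0})$, ultimately rests on $\deg f=d>1$) — then gives
\[
N(f,F,B)\ \ll_{d}\ \frac{B^{n+1}}{|\mathcal P|}\ +\ \frac{1}{|\mathcal P|^{2}}\sum_{\substack{p,q\in\mathcal P\\ p\ne q}}\bigl|T(p,q)\bigr|,\qquad
T(p,q):=\sum_{\mathbf x\in\mathbb Z^{n+1}}w(\mathbf x)\,G_{p}(\mathbf x)\,G_{q}(\mathbf x),
\]
where $G_{p}(\mathbf x):=\nu_{f}\bigl(F(\mathbf x);p\bigr)-1$ depends only on $\mathbf x\bmod p$ and has mean zero modulo $p$.

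To treat the bilinear sum I would apply Poisson summation to $w$ relative to the modulus $pq$. Since $G_{p}G_{q}$ is $pq$‑periodic,
\[
T(p,q)=\frac{1}{(pq)^{n+1}}\sum_{\mathbf h\bmod pq}\widetilde w(\mathbf h)\,\widehat{G_{p}}(\overline q\,\mathbf h)\,\widehat{G_{q}}(\overline p\,\mathbf h),
\qquad
\widehat{G_{p}}(\mathbf k):=\sum_{\mathbf r\bmod p}\bigl(\nu_{f}(F(\mathbf r);p)-1\bigr)e_{p}(-\mathbf k\cdot\mathbf r),
\]
with $\overline q\equiv q^{-1}\bmod p$, $\overline p\equiv p^{-1}\bmod q$, and $\widetilde w(\mathbf h)=\sum_{\mathbf x}w(\mathbf x)e_{pq}(\mathbf h\cdot\mathbf x)$ decaying rapidly, so $\sum_{\mathbf h\bmod pq}|\widetilde w(\mathbf h)|\ll_{n}\max(B,pq)^{n+1}$. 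The decisive observation is that $\nu_{f}(F(\mathbf r);p)=\#\{t\bmod p:\ f(t)=F(\mathbf r)\}$ realises $\widehat{G_{p}}(\mathbf k)$, for $\mathbf k\not\equiv 0\bmod p$, as an exponential sum over the affine hypersurface $V:=\{(t,\mathbf r)\in\mathbb A^{1}\times\mathbb A^{n+1}:\ f(t)=F(\mathbf r)\}$, namely $\widehat{G_{p}}(\mathbf k)=\sum_{(t,\mathbf r)\in V(\mathbb F_{p})}e_{p}(-\mathbf k\cdot\mathbf r)$, while $\widehat{G_{p}}(0)=\#V(\mathbb F_{p})-p^{\,n+1}$. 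In effect the extra variable $t$ absorbs the additive average over $a\bmod p$ coming from $\nu_f$, which is precisely what lets one gain a factor $\sqrt p$ over a slicewise application of Deligne's bound to $\sum_{\mathbf r}e_p(-aF(\mathbf r)-\mathbf k\cdot\mathbf r)$.

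The heart of the matter — and where the announced extensions of the theorems of Deligne and Katz are needed — is the pair of bounds
\[
\bigl|\widehat{G_{p}}(0)\bigr|\ll_{d,e,n}p^{\,n+1/2},\qquad
\bigl|\widehat{G_{p}}(\mathbf k)\bigr|\ll_{d,e,n}p^{\,(n+1)/2}\quad(\mathbf k\not\equiv 0\bmod p),
\]
for all $p\nmid\Delta$. The first follows once one shows $V$ is a smooth, geometrically irreducible affine variety of dimension $n+1$: smoothness comes, via the Euler relation (here $p\nmid e$ enters), from the smoothness of $V(F)$ together with the separability of $f$ modulo $p$; and geometric irreducibility of $f(t)-F(\mathbf r)$ follows from $F$ being irreducible with $e>1$, which forces $F(\mathbf r)-c$ to be irreducible for generic $c$. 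The second bound is the delicate one: for $\mathbf k\ne 0$ the linear form $\mathbf r\mapsto\mathbf k\cdot\mathbf r$ is non‑constant on $V$ (the projection $V\to\mathbb A^{n+1}_{\mathbf r}$ being dominant, again using $e>1$), so the associated Artin–Schreier sheaf on $V$ is nontrivial and wildly ramified along the relevant divisor at infinity of a suitable compactification of $V$; this forces the compactly supported cohomology to be concentrated in degrees $\le n+1$, and Deligne's purity theorem then yields the square‑root cancellation, with an implied constant controlled by the (uniformly bounded) Betti numbers. Building the right compactification of $V$ and controlling the sheaf there so as to reach the exponent $(n+1)/2$ rather than the $(n+2)/2$ of a slicewise estimate is the principal obstacle; the smoothness hypothesis on $V(F)$ is exactly what makes this possible, and the sieve‑side requirement of uniformity in $\mathbf x$ in $\sum_{p}(\nu_{f}(F(\mathbf x);p)-1)\gg|\mathcal P|$ is a second, softer point, handled by an effective count of roots of polynomials modulo primes.

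It remains to assemble. For the optimal $Q$ one has $pq>B$, hence $\sum_{\mathbf h}|\widetilde w(\mathbf h)|\ll(pq)^{n+1}$; splitting the Poisson sum for $T(p,q)$ according to whether $\mathbf h$ vanishes modulo $p$, modulo $q$, or neither, and inserting the bounds above, the $\mathbf h\equiv0$ term contributes $\ll B^{n+1}(pq)^{-1/2}$, the mixed terms contribute less, and the generic term contributes $\ll(pq)^{(n+1)/2}$, so $|T(p,q)|\ll_{d,e,n}B^{n+1}Q^{-1}+Q^{\,n+1}$. Feeding this into the sieve inequality and using $|\mathcal P|\asymp Q/\log Q$,
\[
N(f,F,B)\ \ll_{d,e,n,\|f\|,\|F\|}\ \frac{B^{n+1}\log Q}{Q}\ +\ Q^{\,n+1}.
\]
Choosing $Q=(B^{n+1}\log B)^{1/(n+2)}$ equalises the two terms and gives
\[
N(f,F,B)\ \ll\ \bigl(B^{n+1}\log B\bigr)^{(n+1)/(n+2)}\ =\ B^{\,n+\frac{1}{n+2}}(\log B)^{\frac{n+1}{n+2}},
\]
which is the assertion; one checks that this $Q$ indeed lies in the range $B^{1/2}\ll Q\ll B$ where $pq>B$ and $q<B$, so the estimates used are valid.
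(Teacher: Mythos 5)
Your architecture is sound and closely parallels the paper's: smooth weight, polynomial sieve, Poisson summation mod $pq$, CRT to split into complete sums mod $p$ and mod $q$, then square-root cancellation from cohomology. Two of your choices genuinely differ from the paper's. First, your detector $G_p(\mathbf x)=\nu_f(F(\mathbf x);p)-1$ is closer to Browning's variant of the polynomial sieve than to the Fouvry--Kowalski--Michel trace-function decomposition of $1_{f(\mathbb F_p)}$ which the paper uses; the paper's decomposition gives a uniform lower bound $\sum_{i\ge 2}c_{i,p}t_{i,p}(z)\ge 1/d+O(p^{-1/2})$ for $z\in f(\mathbb F_p)\setminus S_{f,p}$ directly from $|f(\mathbb F_p)|\le p-(p-1)/d$, whereas your claimed lower bound $\sum_p(\nu_f(F(\mathbf x);p)-1)\gg_d|\mathcal P|$ requires an effective Chebotarev estimate for the polynomials $(f(T)-f(t_0))/(T-t_0)$ that is uniform in $t_0$; this is nontrivial and you do not supply it. Second, your recasting of the bilinear exponential sum as a sum over the affine variety $V:f(t)=F(\mathbf r)$ is conceptually appealing but creates a variety whose leading form $c_dt^d$ or $-F_e(\mathbf r)$ is \emph{not} smooth, so it is not "of Deligne type," and the cohomological control you claim does not follow automatically.

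The decisive gap, however, is the claimed bound
\[
\bigl|\widehat{G_p}(\mathbf k)\bigr|\ll_{d,e,n}p^{(n+1)/2}\quad\text{for every }\mathbf k\not\equiv 0\bmod p.
\]
This is false. For $\mathbf k$ lying (mod $p$) on the dual hypersurface $V(F)^*$ — equivalently, when the hyperplane $\langle\mathbf X,\mathbf k\rangle=0$ is tangent to $V_p(F)$ — the relevant cohomology of the incidence variety acquires an extra term and one only gets $p^{(n+2)/2}$, exactly as in the paper's Lemma \ref{lem : bad}. This is not a technicality: the paper's Theorem \ref{cor : mix} explicitly carries the hypothesis that $V(\langle\mathbf X,\mathbf u\rangle)$ be \emph{not tangent} to $V(F)$, and the introductory remark records that this very gap is present in Munshi's original power-sieve paper, with the fix (an averaging argument that bounds the total contribution of "bad" $\mathbf u$ by counting integral points on $V(F)^*$ via Pila and Heath--Brown) being precisely Lemma \ref{lem : avesieve}. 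Without distinguishing good and bad $\mathbf k$ and controlling the density of bad $\mathbf k$, your estimate $|T(p,q)|\ll B^{n+1}Q^{-1}+Q^{n+1}$ does not hold, and the final assembly does not close. You gesture at this ("building the right compactification... is the principal obstacle; the smoothness hypothesis on $V(F)$ is exactly what makes this possible"), but smoothness of $V(F)$ does not prevent hyperplanes from being tangent to it, so the obstacle is not dissolved. To repair the argument you would need to import both the good/bad dichotomy and the averaged treatment of the bad locus, which is the substantive new content of the paper's proof relative to Munshi's.
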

\begin{oss}
For $f=T^{d}$ we obtain the same result as in \cite{Mun09}[Theorem $1.1$]. However, there is an issue in the proof of \cite{Mun09}[Theorem $1.1$]. Munshi suggested the work around that we implement in Lemma $\ref{lem : avesieve}$. Moreover, the bound in \cite{Mun09}[Theorem $1.1$] has been improved by Pierce and Heath-Brown for $n\geq 8$ (\cite{HP12}[Theorem $2$]).
\end{oss}

\begin{oss}
Using the large sieve and a result by Cohen (\cite{Coh81}), Serre has shown that (\cite[Theorem $2$, Chapter $13$]{Ser97}):
\[
N(f,F,B)\ll_{d,e,n,\|f\|,\|F\|} B^{n+\frac{1}{2}}.
\]
in particular, we improve Serre's bound as soon as $n\geq 2$. However, Serre's conjecture predicts $N(f,F,B)\ll B^{n}(\log B)^{m}$ for some $m$.
\end{oss}

As we already mentioned, Theorem $\ref{thm : box}$ is a combination of the polynomial sieve (which we do not state here since it is a bit technical) with bounds of the following type:
\begin{thm}
Let $p$ be a prime number and $F\in\mathbb{F}_{p}[X_{1},...,X_{n}]$ an irreducible homogeneous polynomial of degree $d\geq 1$ such that the projective hypersurface $V(F)\subset \mathbb{P}_{\mathbb{F}_{p}}^{n-1}$ is smooth. Then
\[
\sum_{\mathbf{x}\in\mathbb{F}_{p}^{n}}\Kl_{m}(F(\mathbf{x});p)\ll_{d,n}p^{\frac{n}{2}},
\]
where $\Kl_{m}(a;p)$ denotes the $m$-th Hyper-Kloosterman sum of parameter $(a;p)$:
\[
\Kl_{m}(a;p):=\frac{(-1)^{m-1}}{p^{(m-1)/2}}\sum_{\substack{y_{1},...,y_{m}\in\mathbb{F}_{p}^{\times}\\y_{1}\cdot...\cdot y_{m}=a}}e\Big(\frac{y_{1}+\cdots +y_{m}}{p}\Big).
\] 
\label{thm : esemp}
\end{thm}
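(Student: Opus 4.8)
The plan is to realise the sum cohomologically through the Kloosterman sheaf and the Grothendieck--Lefschetz trace formula, and then to pin down the relevant $\ell$-adic cohomology using the geometry of the map $\mathbf{x}\mapsto F(\mathbf{x})$. First I would set $U:=\mathbb{A}^{n}_{\mathbb{F}_{p}}\setminus V(F)$ and note that $\Kl_{m}(0;p)=0$ (its defining sum is empty), so the sum over $\mathbb{F}_{p}^{n}$ equals the sum over $U(\mathbb{F}_{p})$; on $U$ one has $\Kl_{m}(F(\mathbf{x});p)=\Tr\bigl(\Fr_{\mathbf{x}}\mid(F^{*}\mathcal{Kl}_{m})_{\mathbf{x}}\bigr)$, where $\mathcal{Kl}_{m}$ is the Deligne--Katz Kloosterman sheaf on $\mathbb{G}_{m}$: lisse of rank $m$, pointwise pure of weight $0$, geometrically irreducible, tame with a single unipotent Jordan block at $0$, and totally wild at $\infty$ with $\Swan_{\infty}=1$. (I would assume $p\nmid d$, which excludes only finitely many primes and is harmless for the sieve application.) Grothendieck--Lefschetz then gives
\[
\sum_{\mathbf{x}\in\mathbb{F}_{p}^{n}}\Kl_{m}(F(\mathbf{x});p)=\sum_{i}(-1)^{i}\Tr\bigl(\Fr_{p}\mid H^{i}_{c}(U_{\overline{\mathbb{F}}_{p}},F^{*}\mathcal{Kl}_{m})\bigr),
\]
so everything comes down to locating these groups and bounding their weights.

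The main tool would be the Leray spectral sequence for $f:=F|_{U}\colon U\to\mathbb{G}_{m}$, which by the projection formula reads $E_{2}^{a,b}=H^{a}_{c}(\mathbb{G}_{m},\mathcal{Kl}_{m}\otimes R^{b}f_{!}\overline{\mathbb{Q}}_{\ell})\Rightarrow H^{a+b}_{c}(U,F^{*}\mathcal{Kl}_{m})$. Smoothness of $V(F)$ together with Euler's relation forces $\nabla F$ to be nowhere zero on $U$, so $f$ is smooth and every fibre $X_{t}=\{F=t\}$, $t\neq0$, is a smooth affine hypersurface of dimension $n-1$ whose projective closure in $\mathbb{P}^{n}$ is smooth and meets the hyperplane at infinity transversally along $V(F)$; hence $X_{t}$ is the global Milnor fibre of the isolated homogeneous singularity $F$, with the $\ell$-adic cohomology of a bouquet of $(d-1)^{n}$ spheres of dimension $n-1$. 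Consequently $R^{b}f_{!}\overline{\mathbb{Q}}_{\ell}=0$ for $b\notin\{n-1,2(n-1)\}$, $R^{2(n-1)}f_{!}\overline{\mathbb{Q}}_{\ell}=\overline{\mathbb{Q}}_{\ell}(-(n-1))$, and $\mathcal{M}:=R^{n-1}f_{!}\overline{\mathbb{Q}}_{\ell}$ is lisse on $\mathbb{G}_{m}$ of rank $(d-1)^{n}$ and of weight $\le n-1$. The decisive extra input is homogeneity: the isomorphisms $X_{\lambda^{d}}\cong X_{1}$ trivialise $R^{b}f_{!}\overline{\mathbb{Q}}_{\ell}$ after pullback along the $d$-th power map $[d]\colon\mathbb{G}_{m}\to\mathbb{G}_{m}$, so $\mathcal{M}$ has finite monodromy, and in particular is tame at $0$ and at $\infty$.

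It then remains to compute the two surviving rows of $E_{2}$. Any lisse sheaf on the affine curve $\mathbb{G}_{m}$ has $H^{0}_{c}=0$; moreover $H^{2}_{c}(\mathbb{G}_{m},\mathcal{Kl}_{m}\otimes\mathcal{M})=0$, since its dual is $\mathrm{Hom}_{\pi_{1}^{\mathrm{geom}}}(\mathcal{M},\mathcal{Kl}_{m}^{\vee})$, which vanishes because $\mathcal{M}$ is tame at $\infty$ whereas $\mathcal{Kl}_{m}^{\vee}$ is totally wild there and a tame representation admits no nonzero morphism to a totally wild one; similarly $H^{2}_{c}(\mathbb{G}_{m},\mathcal{Kl}_{m})=0$ as $\mathcal{Kl}_{m}$ is geometrically irreducible and nontrivial. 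So both rows sit in $a=1$, the spectral sequence degenerates, and $H^{i}_{c}(U,F^{*}\mathcal{Kl}_{m})$ vanishes except for $H^{n}_{c}=H^{1}_{c}(\mathbb{G}_{m},\mathcal{Kl}_{m}\otimes\mathcal{M})$ and $H^{2n-1}_{c}=H^{1}_{c}(\mathbb{G}_{m},\mathcal{Kl}_{m})(-(n-1))$. By Grothendieck--Ogg--Shafarevich, $\dim H^{1}_{c}(\mathbb{G}_{m},\mathcal{Kl}_{m}\otimes\mathcal{M})=\Swan_{0}+\Swan_{\infty}=(d-1)^{n}$ (the $\infty$-break is $1/m$, $\mathcal{M}$ being tame) with weight $\le n$, and $H^{1}_{c}(\mathbb{G}_{m},\mathcal{Kl}_{m})$ is one-dimensional with Frobenius eigenvalue $p^{-(m-1)/2}$ by the elementary evaluation $\sum_{a\in\mathbb{F}_{p}^{\times}}\Kl_{m}(a;p)=-p^{-(m-1)/2}$. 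Hence
\[
\sum_{\mathbf{x}\in\mathbb{F}_{p}^{n}}\Kl_{m}(F(\mathbf{x});p)=(-1)^{n}\Tr\bigl(\Fr_{p}\mid H^{1}_{c}(\mathbb{G}_{m},\mathcal{Kl}_{m}\otimes\mathcal{M})\bigr)-p^{\,n-1-(m-1)/2},
\]
where the first term is $\le(d-1)^{n}p^{n/2}$ by the dimension and weight bounds and the second is $\ll_{d,n}p^{n/2}$, which is the assertion. The degenerate case $n=1$, where $F$ is a monomial and the two rows of $E_{2}$ merge, is dispatched directly by the one-variable theory of $\mathcal{Kl}_{m}$.

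The hard part will be the second paragraph: proving that every nonzero fibre of $f$ is a smooth affine hypersurface with $\ell$-adic cohomology concentrated in the two degrees $n-1$ and $2(n-1)$ --- the affine-Milnor-fibre computation, which is exactly where the smoothness of $V(F)$ enters --- and, above all, showing that $\mathcal{M}$ is tame at $\infty$. That tameness is what annihilates $H^{2}_{c}(\mathbb{G}_{m},\mathcal{Kl}_{m}\otimes\mathcal{M})$, hence is the sole source of the square-root cancellation in the dominant group $H^{n}_{c}$; without it one recovers only the trivial bound.
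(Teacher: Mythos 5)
Your approach is genuinely different from the paper's. The paper compactifies: it works with the proper incidence variety $\tilde{X}=\{(\mathbf{x},\lambda)\in\overline{\mathbb{P}}^{n+1}\times\overline{\mathbb{A}}^{1}:F(\mathbf{x})-\lambda x_{0}^{d}=0\}$, studies $R^{i}\tilde{f}_{*}\overline{\mathbb{Q}}_{\ell}$ for the proper map $\tilde{f}$, and cites Katz for the tameness at $\infty$ (Proposition~\ref{prop : 1}.(iv)); it then has to carefully subtract off the contribution of the boundary $\tilde{Z}=Z\times\overline{\mathbb{A}}^{1}$. You instead work directly on the affine open $U=\mathbb{A}^{n}\setminus V(F)$ with the non-proper map $F\colon U\to\mathbb{G}_{m}$, use the Milnor-fibre description of $R^{b}f_{!}\overline{\mathbb{Q}}_{\ell}$, and obtain tameness (in fact finite monodromy) of $\mathcal{M}$ from the $\mathbb{G}_{m}$-equivariance $X_{\lambda^{d}}\cong X_{1}$. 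That mechanism is sound and is genuinely lighter in this homogeneous special case, though it does not extend to the inhomogeneous ``Deligne type'' polynomials the paper's Theorem~\ref{thm : 1} handles.

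However, there is a genuine gap at the very end. You correctly identify the two surviving groups and obtain
\[
\sum_{\mathbf{x}\in\mathbb{F}_{p}^{n}}\Kl_{m}(F(\mathbf{x});p)=(-1)^{n}\Tr\bigl(\Fr\mid H^{1}_{c}(\mathbb{G}_{m},\mathcal{Kl}_{m}\otimes\mathcal{M})\bigr)-p^{\,n-1-(m-1)/2},
\]
but then assert that the second term is $\ll_{d,n}p^{n/2}$. This is false whenever $m\leq n-2$: for instance $n=4$, $m=2$, $F=X_{1}X_{3}+X_{2}X_{4}$ gives, by the elementary count $\#\{F=t\}=p^{3}-p$ for $t\neq 0$ and $\sum_{t\neq0}\Kl_{2}(t)=-p^{-1/2}$, the exact value $-p^{5/2}+p^{1/2}$, which is not $O(p^{2})$. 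The term $p^{n-1-(m-1)/2}$ is precisely the paper's ``main term'' $q^{n-1}\sum_{x}t_{\mathcal{F}}(x)$ from Theorem~\ref{thm : 1}, and it only disappears when $t_{\mathcal{F}}$ is the trace function of the \emph{middle extension} $j_{*}\mathcal{K}\ell_{m}$, for which $t_{\mathcal{F}}(0)=p^{-(m-1)/2}$ and hence $\sum_{x\in\mathbb{F}_{p}}t_{\mathcal{F}}(x)=0$. Your convention $\Kl_{m}(0;p)=0$ (the empty sum) discards exactly the contribution $p^{-(m-1)/2}\cdot\#\{F(\mathbf{x})=0\}\approx p^{n-1-(m-1)/2}$ that cancels the $H^{2n-1}_{c}$ term. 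So either you must keep the points with $F(\mathbf{x})=0$ in the sum with the middle-extension value of $\Kl_{m}(0)$ and then check the cancellation, or you must observe that the result as you stated it is false for small $m$; as written, the last step is simply wrong. (Separately, you flag but do not prove the concentration of $R^{b}f_{!}\overline{\mathbb{Q}}_{\ell}$ in degrees $n-1,2(n-1)$, which does require the full smooth-projective-closure argument; this is acknowledged and is not the main issue.)
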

\begin{oss}
Actually, we show a much more general version of Theorem $\ref{thm : esemp}$: first of all, we prove this result for general trace functions, not only for $\Kl_{m}$. Moreover $F$ will be a polynomial of Deligne type (see Definition $\ref{def : delpol}$) of which the homogeneous polynomials as in Theorem $\ref{thm : esemp}$ are particular cases.
\end{oss}
\begin{thm}
Let $p$ be a prime number and $m\geq 2$, and $F\in\mathbb{F}_{p}[X_{1},...,X_{n}]$ an irreducible homogeneous polynomial of degree $d>1$ such that the projective hypersurface $V(F)\subset \mathbb{P}_{\mathbb{F}_{p}}^{n-1}$ is smooth. For any $\mathbf{u}\in\mathbb{F}_{p}^{n}$ such that $V(\langle\mathbf{X},\mathbf{u}\rangle)$ is not tangent to $V(F)$ (i.e. $V(F)\cap V(\langle\mathbf{X},\mathbf{u}\rangle)$ is smooth of codimension $2$ in $\mathbb{P}_{\mathbb{F}_{p}}^{n-1}$) one has
\[
\sum_{\mathbf{x}\in\mathbb{F}_{p}^{n}}\Kl_{m}(F(\mathbf{x});p)e\Big(\frac{\langle\mathbf{x},\mathbf{u}\rangle}{p}\Big)\ll_{d,n}p^{\frac{n}{2}},
\]
where $e(z):=e^{2\pi iz}$. 
\label{thm : esem}
\end{thm}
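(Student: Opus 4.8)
The plan is to express the sum in $\ell$-adic cohomology and to reduce the required bound to Theorem~\ref{thm : esemp} by means of a linear fibration. Fix an auxiliary prime $\lambda\neq p$; all sheaves below are $\overline{\mathbb{Q}}_\lambda$-sheaves. Let $\mathcal{K}_m$ be the hyper-Kloosterman sheaf on $\mathbb{G}_{m,\mathbb{F}_p}$: it is lisse of rank $m$, pointwise pure of weight $0$, geometrically irreducible (as $m\geq 2$), tame at $0$, totally wild at $\infty$ with all breaks $1/m$, and has trace function $a\mapsto\Kl_m(a;p)$. Let $\mathcal{L}_\psi$ be the Artin--Schreier sheaf of $\psi(x)=e(x/p)$; the hypothesis on $V(F)\cap V(\langle\mathbf{X},\mathbf{u}\rangle)$ forces $\mathbf{u}\neq 0$, so $\mathcal{L}_{\psi(\langle\mathbf{X},\mathbf{u}\rangle)}$ is geometrically nontrivial on $\mathbb{A}^n$. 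On the affine open $U:=\{\mathbf{x}:F(\mathbf{x})\neq 0\}=F^{-1}(\mathbb{G}_m)$ put $\mathcal{H}:=(F^{*}\mathcal{K}_m)\otimes\mathcal{L}_{\psi(\langle\mathbf{X},\mathbf{u}\rangle)}$ (the second factor restricted from $\mathbb{A}^n$), a lisse sheaf of rank $m$ pure of weight $0$. Since $\Kl_m(0;p)=0$ the locus $V(F)$ contributes nothing, so by the Grothendieck--Lefschetz trace formula the sum equals $\sum_i(-1)^i\Tr(\Fr_p\mid H^i_c(U_{\overline{\mathbb{F}}_p},\mathcal{H}))$, and by Deligne's theory of weights $H^i_c(U_{\overline{\mathbb{F}}_p},\mathcal{H})$ is mixed of weights $\leq i$. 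Hence it suffices to prove (A) $H^i_c(U_{\overline{\mathbb{F}}_p},\mathcal{H})=0$ for $i>n$, and (B) $\sum_i\dim H^i_c(U_{\overline{\mathbb{F}}_p},\mathcal{H})\ll_{d,n,m}1$.

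Assertion (B) is routine: $U$ is the complement of a degree-$d$ hypersurface in $\mathbb{A}^n$, and $\mathcal{H}$ has rank $m$ with wild ramification (along $V(F)$ and at infinity) bounded in terms of $\mathcal{K}_m$ and the linear form, so the uniform bounds on sums of Betti numbers of constructible sheaves (Bombieri, Katz, Adolphson--Sperber) apply. For (A) we fiber over $\mathbb{A}^1$. After an $\mathbb{F}_p$-linear change of variables (which preserves all hypotheses and the sum) assume $\langle\mathbf{X},\mathbf{u}\rangle=X_n$, let $\mathrm{pr}\colon\mathbb{A}^n\to\mathbb{A}^1$ be $\mathbf{x}\mapsto x_n$, so $\mathcal{L}_{\psi(\langle\mathbf{X},\mathbf{u}\rangle)}=\mathrm{pr}^{*}\mathcal{L}_\psi$, and let $j\colon U\hookrightarrow\mathbb{A}^n$ be the inclusion. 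The projection formula gives $R\Gamma_c(U_{\overline{\mathbb{F}}_p},\mathcal{H})\cong R\Gamma_c(\mathbb{A}^1_{\overline{\mathbb{F}}_p},\mathcal{L}_\psi\otimes R\mathrm{pr}_{!}(j_!F^{*}\mathcal{K}_m))$, and the stalk of $R\mathrm{pr}_{!}(j_!F^{*}\mathcal{K}_m)$ at $t$ is $R\Gamma_c(\mathbb{A}^{n-1}\setminus V(G_t),G_t^{*}\mathcal{K}_m)$, where $G_t:=F(X_1,\dots,X_{n-1},t)$ has degree $d>1$ and leading form equal to the restriction of $F$ to $V(X_n)=V(\langle\mathbf{X},\mathbf{u}\rangle)$. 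By the non-tangency hypothesis that restriction defines the \emph{smooth} hypersurface $V(F)\cap V(\langle\mathbf{X},\mathbf{u}\rangle)$ in $\mathbb{P}^{n-2}$; hence $G_t$ is a polynomial of Deligne type (Definition~\ref{def : delpol}) for \emph{every} $t$, its leading form being independent of $t$. By Theorem~\ref{thm : esemp} in the general cohomological form announced in the Remark, $R\Gamma_c(\mathbb{A}^{n-1}\setminus V(G_t),G_t^{*}\mathcal{K}_m)$ is concentrated in degree $n-1$ with total dimension $\ll_{d,n,m}1$; therefore $R\mathrm{pr}_{!}(j_!F^{*}\mathcal{K}_m)\cong\mathcal{N}[1-n]$ for a single constructible sheaf $\mathcal{N}$ on $\mathbb{A}^1$ of generic rank $\ll_{d,n,m}1$, and $H^i_c(U_{\overline{\mathbb{F}}_p},\mathcal{H})\cong H^{i-n+1}_c(\mathbb{A}^1_{\overline{\mathbb{F}}_p},\mathcal{L}_\psi\otimes\mathcal{N})$.

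As $\mathbb{A}^1$ is an affine curve this already gives $H^i_c(U_{\overline{\mathbb{F}}_p},\mathcal{H})=0$ for $i>n+1$ (and for $i<n$, since $U$ is affine of dimension $n$), so (A) reduces to the single vanishing
\[
H^{n+1}_c(U_{\overline{\mathbb{F}}_p},\mathcal{H})\cong H^{2}_c(\mathbb{A}^1_{\overline{\mathbb{F}}_p},\mathcal{L}_\psi\otimes\mathcal{N})=(\mathcal{L}_\psi\otimes\mathcal{N})_{\pi_1(\mathbb{A}^1_{\overline{\mathbb{F}}_p})}(-1)=0,
\]
i.e.\ $\mathcal{N}$ has no quotient geometrically isomorphic to $\mathcal{L}_{\overline\psi}$. \emph{This last vanishing is the crux and the step we expect to be the main obstacle.} It is attacked through the local monodromy of $\mathcal{N}$ at $t=\infty$: such a quotient would force a slope-$1$ wild character into the $I_\infty$-representation of $\mathcal{N}$, but a stationary-phase/local-monodromy analysis of the fibration $\mathrm{pr}$ near $t=\infty$, using the explicit structure of $\mathcal{K}_m$ at $\infty$ (Katz), shows that the breaks of $\mathcal{N}$ at $\infty$ are governed by $1/m$ and $d$ — a slope-$1$ constituent arising only in the borderline case $d=m$, and then of the shape $\mathcal{L}_{\psi(ct)}$ with $c$ a fixed nonzero constant times an $m$-th root of unity (and carrying the nontrivial tame twist inherited from $\mathcal{K}_m$) — so that $\mathcal{L}_\psi\otimes\mathcal{N}$ has no geometrically trivial constituent once $p$ exceeds a bound depending only on $d$ and $m$ (and then in particular $p\nmid d$, as was tacitly used above). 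For the finitely many smaller primes the trivial estimate $\bigl|\sum_{\mathbf{x}}\Kl_m(F(\mathbf{x});p)e(\langle\mathbf{x},\mathbf{u}\rangle/p)\bigr|\leq m\,p^n$ is $\ll_{d,m,n}1\ll p^{n/2}$. Combining (A), (B) and the weight bounds then yields $\bigl|\sum_{\mathbf{x}}\Kl_m(F(\mathbf{x});p)e(\langle\mathbf{x},\mathbf{u}\rangle/p)\bigr|\leq\sum_{i\leq n}\dim H^i_c(U_{\overline{\mathbb{F}}_p},\mathcal{H})\cdot p^{i/2}\ll_{d,n,m}p^{n/2}$. (Structurally, this is the case $\mathcal{F}=\mathcal{K}_m$ of the general form of Theorem~\ref{thm : esemp}: the homogeneity of $F$ makes its leading form $F$ itself, and the non-tangency hypothesis is exactly the requirement there that the leading form and the linear form cut out a smooth subvariety of $\mathbb{P}^{n-2}$.)
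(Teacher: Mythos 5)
Your route is genuinely different from the paper's, which derives Theorem~\ref{thm : esem} from Theorem~\ref{cor : mix} by exploiting the homogeneity of $F$ and of $\langle\mathbf{X},\mathbf{u}\rangle$: one rescales $\mathbf{x}\mapsto\eta\mathbf{x}$, parametrizes by $\mathbb{F}_q^{\times}/\mathbb{F}_q^{\times d}$, applies Theorem~\ref{thm : 1} to the resulting inner sums, and uses $\sum_b\psi(b)=0$ to kill the main term. You instead fiber directly over the linear coordinate and try to squeeze everything into a single constructible sheaf $\mathcal{N}$ on $\mathbb{A}^1$. The crux, however, has a genuine gap: you assert that $R\Gamma_c(\mathbb{A}^{n-1}\setminus V(G_t),G_t^{*}\mathcal{K}_m)$ is concentrated in degree $n-1$ with bounded dimension, so that $R\mathrm{pr}_{!}(j_{!}F^{*}\mathcal{K}_m)\cong\mathcal{N}[1-n]$, and you attribute this to a ``cohomological form'' of Theorem~\ref{thm : esemp}. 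Neither Theorem~\ref{thm : esemp} nor its general version Theorem~\ref{thm : 1} is a concentration statement: Theorem~\ref{thm : 1} has a nonzero main term proportional to $\sum_a t_{\mathcal{F}}(a)$, and the paper's proof traces that main term precisely to cohomology in degrees strictly above the middle one (the geometrically constant $R^i\tilde f_{*}\overline{\mathbb{Q}}_{\ell}$, $i\geq n$, of Corollary~\ref{cor : geomconst}).

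Moreover, the concentration claim is simply false once $n$ is large relative to $m$. Over $\mathbb{F}_{q^k}$ one has $\sum_a\Kl_m(a;q^k)=-q^{-k(m-1)/2}\neq 0$, so by $(\ref{eq : trivial1})$ (applied with $n-1$ variables) the alternating Frobenius trace on the fiber cohomology equals $-q^{k(n-2-(m-1)/2)}+O_{d,n,m}(q^{kn/2})$. For $n\geq m+4$ the first term dominates as $k\to\infty$ and grows strictly faster than any multiple of $q^{k(n-1)/2}$, which is incompatible with the fiber cohomology being confined to degree (hence weight) at most $n-1$. Hence $R\mathrm{pr}_{!}(j_{!}F^{*}\mathcal{K}_m)$ necessarily has nonzero cohomology sheaves in higher degrees, and your reduction to the single vanishing $H^{2}_c(\mathbb{A}^1_{\overline{\mathbb{F}}_p},\mathcal{L}_\psi\otimes\mathcal{N})=0$ omits their contribution. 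The idea is not beyond repair --- the Artin--Schreier twist in the base direction does ultimately annihilate the geometrically constant tail, which is precisely what $\sum_b\psi(b)=0$ accomplishes in the paper's proof --- but the argument must keep track of the higher-degree sheaves rather than claim they vanish. In addition, your local-monodromy analysis of $\mathcal{N}$ at $\infty$ (especially the borderline case $d=m$) is only a sketch, you flag it yourself as the expected main obstacle, and it would require a complete proof in its own right.
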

\begin{oss}
Also in this case, we prove Theorem $\ref{thm : esem}$ not only for $\Kl_{m}$ but for a more general class of trace functions. Moreover we consider two irreducible polynomials $F,G\in\mathbb{F}_{p}[X_{1},...,X_{n}]$ of degree $d,e>1$ such that $V(F),V(G),V(F)\cap V(G)\subset \mathbb{P}_{\mathbb{F}_{p}}^{n-1}$ are smooth. On the other hand, the assumption of $F$ and $G$ homogeneous will be crucial for our proof. 
\end{oss}
\subsubsection{Organization of the paper}
In the next section we introduce the Polynomial sieve. In section $\ref{sec : poldel}$ we prove the bounds for sum of trace functions over polynomials of Deligne type. Finally, in the last part of this paper we prove Theorem $\ref{thm : box}$.
\newline\newline

\section{The polynomial sieve}
\subsubsection{The power sieve}
Let $\mathcal{A}:=(a(n))_{n\in\mathbb{N}}$ be a sequence of non-negative numbers such that $\sum_{n}a(n)<\infty$, and assume one knows the distribution of the sequence $\mathcal{A}$ for certain moduli. Then, one can ask what is the contribution of the $d$-th power in the above sum, i.e. what is the size of
\[
\mathcal{V}_{T^{d}}(\mathcal{A}):=\sum_{n}a(n^{d}).
\]
This question was investigated by Heath-Brown (\cite{HB84}), in the case of squares, and by Munshi (\cite{Mun09}) for general powers. Their arguments rely on two observations:
\begin{itemize}
\item[$(i)$] if a natural number $n$ is a $d$-th power then $n\mod p$ is a $d$-th power for any prime $p$,
\item[$(ii)$] if $p\equiv 1\mod d$, one decomposes the
characteristic function of $d$-th powers modulo $p$ over $\mathbb{F}_{p}^{\times}$ using multiplicative characters of order $d$
\begin{equation}
1_{T^{d},p}=\frac{1}{d}+\frac{1}{d}\sum_{\chi_{p}^{d}=1,\chi_{p}\neq 1}\chi_{p}.
\end{equation}
\end{itemize}
These two remarks lead to the following
\begin{lem}[\cite{HB84}, \cite{Mun09}]
Let $\mathcal{P}$ be a finite set of prime numbers $p\equiv 1\mod d$ and $P:=|\mathcal{P}|$. Assume that $(a(n))_{n\in\mathbb{N}}$ is a sequence of non-negative numbers such that $a(n)=0$ if $n=0$ or $n\geq e^{P}$. Then
\begin{equation}
\mathcal{V}_{T^{d}}(\mathcal{A})\ll_{d} P^{-1}\sum_{n}a(n)+P^{-2}\sum_{p\neq q\in\mathcal{P}}\sum_{\substack{\chi_{p}\neq 1, \chi_{p}^{d}=1 \\\chi_{q}\neq 1, \chi_{q}^{d}=1}}\Big|\sum_{n}a(n)\chi_{p}(n)\overline{\chi}_{q}(n)\Big|.
\label{eq : sheu}
\end{equation}
\label{lem : s}
\end{lem}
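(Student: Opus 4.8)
The plan is to run the power sieve of Heath-Brown and Munshi in its usual ``square out the character sum'' shape. For $p\in\mathcal{P}$ and $n\in\mathbb{Z}$ I set $\delta_{p}(n):=\sum_{\chi_{p}\neq 1,\,\chi_{p}^{d}=1}\chi_{p}(n)$. By observation $(ii)$ and orthogonality of the $d$ characters of $\mathbb{F}_{p}^{\times}$ of order dividing $d$ (there are exactly $d$ of them, since $p\equiv 1\bmod d$), one has $\delta_{p}(n)=d-1$ when $p\nmid n$ and $n$ is a $d$-th power modulo $p$, $\delta_{p}(n)=-1$ when $p\nmid n$ and $n$ is not, and $\delta_{p}(n)=0$ when $p\mid n$; in particular $\delta_{p}(n)\in\mathbb{R}$ and $|\delta_{p}(n)|\leq d-1$.

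First I would isolate the basic dichotomy. If $n$ is a perfect $d$-th power with $a(n)\neq 0$ — so $1\leq n<e^{P}$ — then $\delta_{p}(n)=d-1$ for every $p\in\mathcal{P}$ with $p\nmid n$, hence $\sum_{p\in\mathcal{P}}\delta_{p}(n)=(d-1)\bigl(P-\omega_{\mathcal{P}}(n)\bigr)$ with $\omega_{\mathcal{P}}(n):=\#\{p\in\mathcal{P}:p\mid n\}$. Since $p\equiv 1\bmod d$ with $d\geq 2$ forces $p\geq 3>e$, distinctness of those primes gives $3^{\omega_{\mathcal{P}}(n)}\leq n<e^{P}$, so $\omega_{\mathcal{P}}(n)<P/\log 3$ and $1/\log 3<1$. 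Thus $\sum_{p\in\mathcal{P}}\delta_{p}(n)\gg_{d}P$, whence $\bigl(\sum_{p}\delta_{p}(n)\bigr)^{2}\gg_{d}P^{2}$, while trivially $\sum_{p}\delta_{p}(n)^{2}\leq(d-1)^{2}P$. Consequently, provided $P$ exceeds a constant depending only on $d$ (below which the Lemma is trivial because $\mathcal{V}_{T^{d}}(\mathcal{A})\leq\sum_{n}a(n)$),
\[
\Bigl(\sum_{p\in\mathcal{P}}\delta_{p}(n)\Bigr)^{2}-\sum_{p\in\mathcal{P}}\delta_{p}(n)^{2}\ \gg_{d}\ P^{2}
\]
for every perfect $d$-th power $n$ with $a(n)\neq 0$.

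Next I would multiply by $a(n)\geq 0$ and sum over perfect $d$-th powers $n$. As $\mathcal{V}_{T^{d}}(\mathcal{A})=\sum_{n}a(n^{d})=\sum_{m}a(m)$, the latter over perfect $d$-th powers $m$, the left side of the resulting inequality is $\gg_{d}P^{2}\,\mathcal{V}_{T^{d}}(\mathcal{A})$. On the right side I discard the nonpositive term $-\sum_{p}\delta_{p}(n)^{2}$ and then, since $a(n)\bigl(\sum_{p}\delta_{p}(n)\bigr)^{2}\geq 0$ for \emph{every} $n$, extend the summation back to all $n\geq 1$, so that
\[
P^{2}\,\mathcal{V}_{T^{d}}(\mathcal{A})\ \ll_{d}\ \sum_{n\geq 1}a(n)\Bigl(\sum_{p\in\mathcal{P}}\delta_{p}(n)\Bigr)^{2}.
\]
Expanding the square as $\sum_{p,q\in\mathcal{P}}\delta_{p}(n)\delta_{q}(n)$ and splitting off the diagonal $p=q$, whose contribution is $\sum_{p}\delta_{p}(n)^{2}\leq(d-1)^{2}P$ pointwise, the off-diagonal equals $\sum_{p\neq q}\delta_{p}(n)\delta_{q}(n)=\sum_{p\neq q}\sum_{\chi_{p}\neq 1,\chi_{p}^{d}=1}\sum_{\chi_{q}\neq 1,\chi_{q}^{d}=1}\chi_{p}(n)\overline{\chi}_{q}(n)$, using that $\delta_{q}(n)$ is real. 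Swapping the order of summation and inserting absolute values yields
\[
P^{2}\,\mathcal{V}_{T^{d}}(\mathcal{A})\ \ll_{d}\ P\sum_{n}a(n)+\sum_{p\neq q\in\mathcal{P}}\ \sum_{\substack{\chi_{p}\neq 1,\ \chi_{p}^{d}=1\\ \chi_{q}\neq 1,\ \chi_{q}^{d}=1}}\Bigl|\sum_{n}a(n)\chi_{p}(n)\overline{\chi}_{q}(n)\Bigr|,
\]
which is (\ref{eq : sheu}) after division by $P^{2}$.

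The one point needing genuine care is the lower bound in the middle display: one must force $\omega_{\mathcal{P}}(n)$ to be a fixed proper fraction of $P$ for \emph{all} $n$ in the support of $\mathcal{A}$. This is exactly where the two hypotheses cooperate — $p\equiv 1\bmod d$ (hence $p\geq 3>e$) together with the support condition $a(n)=0$ for $n\geq e^{P}$ give $\omega_{\mathcal{P}}(n)<P/\log 3$ with $1/\log 3<1$, which is what makes the sieve nontrivial. The remaining ingredients are orthogonality of Dirichlet characters, the identity $\bigl(\sum_{i}x_{i}\bigr)^{2}-\sum_{i}x_{i}^{2}=\sum_{i\neq j}x_{i}x_{j}$, and rearrangements of finite sums.
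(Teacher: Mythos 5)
Your proof is correct. This is the standard power-sieve argument from Heath-Brown and Munshi (which the paper cites rather than reproves), and it mirrors the strategy the paper uses for its generalization in Lemma~\ref{lem : sieve}: form the weighted square of $\sum_{p}\delta_p(n)$, use the support restriction $1\le n<e^{P}$ to force $\omega_{\mathcal{P}}(n)$ to be a bounded-away-from-$1$ fraction of $P$ so that the inner sum is $\gg_d P$ at every $d$-th power in the support of $\mathcal{A}$, and expand the square into diagonal and off-diagonal contributions.
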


\subsubsection{A polynomial sieve}
Our goal is to generalize the power sieve to any polynomial with coefficients over $\mathbb{Z}$: let $h\in\mathbb{Z}[T]$ with $\deg h>1$, we denote $\mathcal{P}_{h}:=\{p\text { prime: } h(\mathbb{F}_{p})\neq\mathbb{F}_{p} \}$. We want to provide an upper bound for
\[
\mathcal{V}_{h}(\mathcal{A}):=\sum_{n\in h(\mathbb{Z})}a(n),
\]
where $(a(n))_{n\in\mathbb{N}}$ is as before. Also in this case, one has that if $n\in h(\mathbb{Z})$ then $n\in h(\mathbb{F}_{p})$ for all prime numbers $p$. Then, as in the case of the $d$-th powers, to generalize the power sieve we need a decomposition of the characteristic function $1_{h(\mathbb{F}_{p})}$ for any $p\in\mathcal{P}_{h}$. This is done in \cite[Proposition $6.7$]{FKM14a}, indeed Fouvry, Kowalski and Michel showed the following:
\begin{prop}
Let $p,\ell$ be two distinct primes and $h\in\mathbb{F}_{p}[T]$ a non trivial polynomial of degree $\deg h < p$. There exists an integer $k_{p}\geq 1$ and a finite number of trace functions $t_{i,p}$ associated to middle-extension $\ell$-adic sheaves $\mathcal{F}_{i,p}$, $1\leq i\leq k_{p}$ which are pointwise pure of weight $0$ and algebraic numbers $c_{i,p}\in\overline{\mathbb{Q}}$ such that
\begin{equation}
1_{h(\mathbb{F}_{p})}(x)=\sum_{i=1}^{k_{p}}c_{i,p}t_{i,p}(x)
\end{equation}
for all $x\in\mathbb{F}_{p}\setminus S_{h,p}$, where $S_{h,p}:=\{h(x):x\in\overline{\mathbb{F}}_{p}, h'(x)=0\}\cap\mathbb{F}_{p}$ and with the following properties:
\begin{itemize}
\item[$(i)$] the constants $k_{p}, |c_{i,p}|$ and the conductor of $\mathcal{F}_{i,p}$ (see Definition $\ref{defn : cond}$), $ c(\mathcal{F}_{i,p})$, are bounded only in terms of $\deg h$,
\item[$(ii)$] the sheaf $\mathcal{F}_{1,p}$ is trivial and none of $\mathcal{F}_{i,p}$ for $i\neq 1$ is geometrically trivial, and moreover
\[ 
c_{1,p}=\frac{|h(\mathbb{F}_{p})|}{p}+O_{\deg h}(p^{-\frac{1}{2}})
\]
\item[$(iii)$] all $\mathcal{F}_{i,p}$ are tame.
\end{itemize}
\label{thm : cite}
\end{prop}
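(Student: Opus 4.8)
The plan is to produce $1_{h(\mathbb{F}_p)}$ directly as a $\overline{\mathbb{Q}}$-linear combination of trace functions attached to the fibres of $h$, by an inclusion–exclusion, and then to isolate the geometrically trivial part. Put $d:=\deg h$, let $B\subset\mathbb{A}^1$ be the (geometric) branch locus of $h$, that is, the finite set of its critical values, so that $B(\mathbb{F}_p)=S_{h,p}$, and set $U:=\mathbb{A}^1\setminus B$; over $U$ the map $h$ is finite étale of degree $d$. Since $d<p$, all ramification indices of $h$, both over $B$ and at $\infty$, are $<p$, so every cover of $U$ built below from $h$ is tame — which will give property $(iii)$.

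For $1\le k\le d$ I would introduce the finite étale cover $\pi_{k}\colon V^{(k)}\to U$ parametrising ordered $k$-tuples of pairwise distinct points in a fibre of $h$: concretely $V^{(k)}$ is the open and closed subscheme of the $k$-fold fibre power $h^{-1}(U)\times_U\cdots\times_U h^{-1}(U)$ obtained by deleting the locus where two coordinates coincide (open and closed because $h^{-1}(U)\to U$ is unramified and separated), so $\pi_k$ is finite étale of degree $d(d-1)\cdots(d-k+1)$. Set $\mathcal{G}_k:=(\pi_k)_*\overline{\mathbb{Q}}_\ell$, a lisse sheaf on $U$, pointwise pure of weight $0$, tame, of rank $d!/(d-k)!$. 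For $y\in U(\mathbb{F}_p)$, the stalk of $\mathcal{G}_k$ at $y$ has a canonical basis indexed by the geometric points of $V^{(k)}$ above $y$, which $\mathrm{Frob}_y$ permutes, and an ordered tuple $(x_1,\dots,x_k)$ is $\mathrm{Frob}_y$-fixed exactly when every $x_i$ lies in $h^{-1}(y)(\mathbb{F}_p)$; hence, writing $N(y):=|h^{-1}(y)(\mathbb{F}_p)|$,
\[
t_{\mathcal{G}_k}(y)=N(y)(N(y)-1)\cdots(N(y)-k+1)=k!\binom{N(y)}{k}.
\]
Since $N(y)\le d$ and $1_{N\ge 1}=\sum_{k\ge 1}(-1)^{k+1}\binom{N}{k}$, this gives, for all $y\in\mathbb{F}_p\setminus S_{h,p}$,
\[
1_{h(\mathbb{F}_p)}(y)=\sum_{k=1}^{d}\frac{(-1)^{k+1}}{k!}\,t_{\mathcal{G}_k}(y).
\]

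Next I would bring the right-hand side into the prescribed form. Decompose each $\mathcal{G}_k|_U$ into irreducible lisse constituents (still pointwise pure of weight $0$), replace each constituent by its middle extension to $\mathbb{A}^1$ — which leaves every trace value on $U(\mathbb{F}_p)=\mathbb{F}_p\setminus S_{h,p}$ unchanged — and regroup: let $\mathcal{F}_{1,p}:=\overline{\mathbb{Q}}_\ell$ carry all the geometrically trivial constituents (each being geometrically trivial, hence of constant trace function, which we fold into $c_{1,p}$), and let $\mathcal{F}_{2,p},\dots,\mathcal{F}_{k_p,p}$ be the geometrically non-trivial constituents, with the $c_{i,p}$ the coefficients that result. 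Tameness of the $\mathcal{F}_{i,p}$ is inherited from the $\mathcal{G}_k$, which is $(iii)$. For $(i)$, $k_p$ is bounded by $1+\sum_{k=1}^{d}d!/(d-k)!\ll_{d}1$; the $c_{i,p}$ are rational combinations with bounded denominators of integer multiplicities and of Frobenius traces on geometrically constant pieces of bounded rank, so $|c_{i,p}|\ll_{d}1$; and $\mathcal{G}_k$ has rank $d!/(d-k)!$, at most $|B|+1\le d$ singularities and zero Swan conductors, so $c(\mathcal{G}_k)\ll_{d}1$, whence $c(\mathcal{F}_{i,p})\ll_{d}1$ by the standard behaviour of the conductor (Definition \ref{defn : cond}) under subquotients, direct images and middle extensions.

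Finally, for property $(ii)$ I would pin down $c_{1,p}$ by summing the identity above over $y\in\mathbb{F}_p$; it can fail only on $S_{h,p}$, a set of size $\le d-1$ on which each trace function occurring is $\ll_{d}1$ (middle extensions of bounded-rank sheaves mixed of weights $\le 0$), so that
\[
|h(\mathbb{F}_p)|=c_{1,p}\,p+\sum_{i=2}^{k_p}c_{i,p}\sum_{y\in\mathbb{F}_p}t_{i,p}(y)+O_{d}(1).
\]
For $i\ge 2$, $\mathcal{F}_{i,p}$ is an irreducible, geometrically non-trivial middle-extension sheaf on $\mathbb{A}^1$, so $H^{0}_{c}=H^{2}_{c}=0$ while $\dim H^{1}_{c}(\mathbb{A}^{1}_{\overline{\mathbb{F}}_p},\mathcal{F}_{i,p})\ll_{d}1$ by the Euler–Poincaré formula, and as $\mathcal{F}_{i,p}$ is mixed of weights $\le 0$ the Grothendieck–Lefschetz trace formula together with Deligne's Riemann Hypothesis (\cite{Del74}) give $\sum_{y}t_{i,p}(y)\ll_{d}p^{1/2}$. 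Hence $|h(\mathbb{F}_p)|=c_{1,p}p+O_{d}(p^{1/2})$, i.e. $c_{1,p}=|h(\mathbb{F}_p)|/p+O_{d}(p^{-1/2})$. The one point that needs genuine care is the uniformity claimed in $(i)$: one must check that passing from the concrete sheaves $\mathcal{G}_k$ to their irreducible middle-extension constituents leaves the number of sheaves, the size of the coefficients and the conductors bounded solely in terms of $\deg h$, uniformly in $p$ — and this is precisely where the conductor formalism, together with the hypothesis $\deg h<p$ that forces all ramification to be tame (so Swan conductors never enter), is essential.
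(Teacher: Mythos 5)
The paper does not prove Proposition~\ref{thm : cite} itself: it cites \cite{FKM14a}[Proposition~6.7] and uses the decomposition as a black box. So the useful thing to say is whether your construction is correct, and it is; moreover it is the natural proof and is in the spirit of the one in \cite{FKM14a}. The key points are exactly the ones you isolate: $h\colon h^{-1}(U)\to U$ is finite \'etale of degree $d$ over the complement $U$ of the critical values; the open--closed subscheme $V^{(k)}$ of the $k$-fold fibre power parametrising ordered $k$-tuples of distinct preimages gives, via $(\pi_k)_*\overline{\mathbb{Q}}_\ell$, a lisse sheaf on $U$ whose trace at $y$ is the falling factorial $N(y)(N(y)-1)\cdots(N(y)-k+1)$ with $N(y)=|h^{-1}(y)(\mathbb{F}_p)|$; and since $N(y)\le d$, the inclusion--exclusion identity $1_{N\ge1}=\sum_{k\ge1}(-1)^{k+1}\binom{N}{k}$ terminates at $k=d$ and yields the desired linear combination on $U(\mathbb{F}_p)=\mathbb{F}_p\setminus S_{h,p}$. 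The hypothesis $\deg h<p$ enters precisely where you use it: all ramification indices of $h$ over $B\cup\{\infty\}$ are $\le d<p$, so $h_*\overline{\mathbb{Q}}_\ell$ is tame, $\mathcal{G}_k$ is a direct summand of its $k$-th tensor power, hence tame, and all Swan terms vanish; this is what makes the conductor bound in $(i)$ purely combinatorial (rank $d!/(d-k)!$, at most $d$ singular points).

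Two small points are worth flagging, though they do not invalidate the argument. First, the assertion that the Jordan--H\"older constituents of $\mathcal{G}_k$ are still pointwise pure of weight $0$ is not completely formal: it relies on the fact that a lisse pointwise pure sheaf on a smooth curve over a finite field is geometrically semisimple (Deligne, \cite{Del80}), so that arithmetic subquotients are again pointwise pure on $U$. Second, when you isolate the ``geometrically trivial part'' and fold it into a single constant $c_{1,p}$, what you are really collecting are rank-one arithmetically irreducible, geometrically trivial constituents $\alpha_j^{\deg}$ with $|\alpha_j|=1$ (by purity), each contributing a constant trace $\alpha_j$; $c_{1,p}$ is then a bounded algebraic combination of these with the inclusion--exclusion weights and multiplicities, which is exactly what gives $|c_{1,p}|\ll_{d}1$. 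With those two remarks made explicit, the proof of $(ii)$ by summing over $y\in\mathbb{F}_p$, controlling the $\le d-1$ exceptional points by the rank bound, and applying Grothendieck--Lefschetz plus Deligne's bound (\cite{Del74}, \cite{Del80}) to the geometrically non-trivial constituents, is correct.
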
 
Using this we can prove
\begin{lem}
Let $h$ be a polynomial in $\mathbb{Z}[T]$ with $\deg h=d\geq 2$. Let $\mathcal{P}\subset\mathcal{P}_{h}$ be a finite subset of $\mathcal{P}_{h}$ and denote $P:=|\mathcal{P}|$. Then for any sequence $(a(n))_{n\in\mathbb{N}}$ of non-negative numbers such that $a(n)=0$ if $|\{p\in\mathcal{P}:n\in S_{h,p}\mod p)\}|\geq \frac{P}{2d}$ we have
\begin{equation}
\mathcal{V}_{h}(\mathcal{A})\ll_{d}P^{-1}\sum_{n}a(n)+ P^{-2}\sum_{p\neq q\in\mathcal{P}}\sum_{\substack{i,j\\i\neq 1, j\neq 1}}\Big|\sum_{n}a(n)t_{i,p}(n)\overline{t}_{j,q}(n)\Big|,
\label{eq : polsiv}
\end{equation}
where the functions $t_{i,p}(\cdot)$ and the sets $S_{h,p}$ are the ones in the decomposition of $1_{h(\mathbb{F}_{p})}$ in Proposition $\ref{thm : cite}$.
\label{lem : sieve}
\end{lem}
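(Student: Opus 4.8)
The plan is to mimic the classical power–sieve argument of Heath-Brown and Munshi, but feeding in the Fouvry–Kowalski–Michel decomposition (Proposition \ref{thm : cite}) in place of the elementary expansion \eqref{eq : sheu} of the characteristic function of $d$-th powers. Fix $\mathcal{P}\subset\mathcal{P}_h$ with $|\mathcal{P}|=P$. The starting point is that every $n$ with $a(n)\neq 0$ that lies in $h(\mathbb{Z})$ satisfies $n\bmod p\in h(\mathbb{F}_p)$ for every $p\in\mathcal{P}$. For each such $n$, let $\mathcal{P}(n):=\{p\in\mathcal{P}: n\bmod p\notin S_{h,p}\}$; by the hypothesis on the support of $(a(n))$, whenever $a(n)\neq 0$ we have $|\mathcal{P}\setminus\mathcal{P}(n)|<P/(2d)$, so $|\mathcal{P}(n)|>P(1-1/(2d))\geq P/2$. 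First I would write, for each $p\in\mathcal{P}(n)$ with $n\in h(\mathbb{F}_p)$,
\[
1=1_{h(\mathbb{F}_p)}(n)=\sum_{i=1}^{k_p}c_{i,p}\,t_{i,p}(n),
\]
using Proposition \ref{thm : cite}; this is legitimate precisely because $n\bmod p\notin S_{h,p}$.

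Next I would sum a suitable positive weight over the good primes. Concretely, for $n$ with $a(n)\neq 0$ and $n\in h(\mathbb{Z})$ we have $|\mathcal{P}(n)|\geq P/2$, hence
\[
\frac{2}{P}\sum_{p\in\mathcal{P}(n)}\sum_{i=1}^{k_p}c_{i,p}\,t_{i,p}(n)=\frac{2}{P}|\mathcal{P}(n)|\geq 1.
\]
Multiplying by $a(n)\geq 0$ and summing over $n$ gives
\[
\mathcal{V}_h(\mathcal{A})=\sum_{n\in h(\mathbb{Z})}a(n)\leq \frac{2}{P}\sum_{n}a(n)\sum_{p\in\mathcal{P}(n)}\sum_{i=1}^{k_p}c_{i,p}\,t_{i,p}(n).
\]
The usual Cauchy–Schwarz step then squares this: bounding $\sum_{n}a(n)=\|a\|_1$ and using $a(n)=a(n)^{1/2}\cdot a(n)^{1/2}$, one gets
\[
\mathcal{V}_h(\mathcal{A})^2\ll_d \frac{\|a\|_1}{P^2}\sum_{n}a(n)\Big|\sum_{p\in\mathcal{P}(n)}\sum_{i=1}^{k_p}c_{i,p}\,t_{i,p}(n)\Big|^2,
\]
and opening the square produces a diagonal contribution from pairs $(p,i)=(q,j)$ and an off-diagonal contribution from $p\neq q$ (the terms with $p=q$, $i\neq j$ being absorbed into the diagonal up to constants depending on $d$ via the bound on $k_p$ and $|c_{i,p}|$). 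The diagonal is $\ll_d P^{-2}\cdot\|a\|_1\cdot\sum_n a(n)\cdot P=P^{-1}\|a\|_1^2$ after using $|t_{i,p}(n)|\ll 1$ (pointwise purity of weight $0$) and $|\mathcal{P}(n)|\leq P$; this yields the first term $P^{-1}\sum_n a(n)$ in \eqref{eq : polsiv} once one takes square roots (using $\sqrt{x+y}\leq\sqrt{x}+\sqrt{y}$).

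For the off-diagonal part, one is left with
\[
P^{-2}\sum_{p\neq q\in\mathcal{P}}\sum_{i,j}c_{i,p}\overline{c}_{j,q}\sum_{n}a(n)t_{i,p}(n)\overline{t}_{j,q}(n),
\]
and here I would isolate the $i=1$ or $j=1$ contributions. Since $\mathcal{F}_{1,p}$ is the trivial sheaf, $t_{1,p}\equiv 1$, so the terms with $i=1$ and $j=1$ give $c_{1,p}\overline{c}_{1,q}\sum_n a(n)$; using part $(ii)$, $c_{1,p}=|h(\mathbb{F}_p)|/p+O(p^{-1/2})=O_d(1)$, this contributes $\ll_d P^{-2}\cdot P^2\cdot\|a\|_1=\|a\|_1$, which is dominated by $P^{-1}\|a\|_1^2$ for any reasonable range (or, more carefully, one keeps only the main term $c_{1,p}$ and shows the cross terms $i=1$, $j\neq 1$ telescope into the sum over $n$ of $a(n)$ times a single trace function, again $O_d(\|a\|_1)$ up to the $P^{-1}$ factor). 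After discarding these, only pairs with $i\neq 1$ and $j\neq 1$ survive, and bounding $|c_{i,p}|,|c_{j,q}|\ll_d 1$ and $k_p\ll_d 1$ gives exactly the second term of \eqref{eq : polsiv}. The main subtlety — and the step I expect to require the most care — is the bookkeeping of which $n$ are ``admissible'' at which primes: one must restrict to $\mathcal{P}(n)$ rather than all of $\mathcal{P}$ so that the decomposition in Proposition \ref{thm : cite} is valid at $n$, and then check that replacing $\mathcal{P}(n)$ by $\mathcal{P}$ in the final off-diagonal sum (where one no longer has control of whether $n\notin S_{h,p}$) only enlarges the right-hand side, which is fine because one can simply extend the sum over $n$ to all $n$ and over $p\neq q$ to all of $\mathcal{P}\times\mathcal{P}$ at the cost of constants — this is exactly the ``work around'' attributed to Munshi and carried out in Lemma \ref{lem : avesieve}, which handles the $n$ lying in some $S_{h,p}$ via the support hypothesis $a(n)=0$ when $|\{p\in\mathcal{P}:n\in S_{h,p}\bmod p\}|\geq P/(2d)$.
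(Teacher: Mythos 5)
Your proposal misses the crucial structural idea and, as a result, arrives at a strictly weaker estimate than Lemma \ref{lem : sieve} claims. Two points.

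\textbf{The Cauchy–Schwarz step loses a factor $\sqrt{P}$.} You pass from
\[
\mathcal{V}_h(\mathcal{A})\leq \frac{2}{P}\sum_{n}a(n)X(n),\qquad X(n):=\sum_{p\in\mathcal{P}(n)}\sum_{i}c_{i,p}t_{i,p}(n),
\]
to $\mathcal{V}_h(\mathcal{A})^2\ll \frac{\|a\|_1}{P^2}\sum_n a(n)|X(n)|^2$ via Cauchy–Schwarz, and then estimate the diagonal of the inner sum by $P\|a\|_1$, concluding a contribution $P^{-1}\|a\|_1^2$ to $\mathcal{V}_h^2$. But the square root of that is $P^{-1/2}\|a\|_1$, \emph{not} $P^{-1}\|a\|_1$. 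The correct argument (as in the paper) does not use Cauchy–Schwarz at all: it uses the \emph{direct positivity} inequality. If one can show that a quantity $Y(n)$ satisfies $|Y(n)|\gg_d P$ for every $n\in h(\mathbb{Z})$ with $a(n)\neq 0$, then
\[
\Sigma:=\sum_n a(n)|Y(n)|^2 \geq \sum_{n\in h(\mathbb{Z})}a(n)|Y(n)|^2 \gg_d P^2\,\mathcal{V}_h(\mathcal{A}),
\]
so $\mathcal{V}_h\ll_d P^{-2}\Sigma$ without squaring $\mathcal{V}_h$. Opening $\Sigma$ then gives $P^{-1}\sum a(n)$ from the diagonal and $P^{-2}$ times the off-diagonal, which is exactly (\ref{eq : polsiv}). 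Your intermediate linear bound $\mathcal{V}_h\leq \tfrac{2}{P}\sum_n a(n)X(n)$ throws information away; by the time you re-square with Cauchy–Schwarz you have lost the factor you need.

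\textbf{Keeping the trivial term $i=1$ inside the squared quantity destroys the cancellation.} The point of the sieve is that the squared quantity should be large for $n\in h(\mathbb{Z})$ but exhibit cancellation for generic $n$. Your $X(n)$ is essentially $\sum_{p}1_{h(\mathbb{F}_p)}(n)$, which is about $(1-1/d)P$ for \emph{every} $n$, not just elements of $h(\mathbb{Z})$; its square is of order $P^2$ for generic $n$ and so $\Sigma'=\sum_n a(n)|X(n)|^2 \asymp P^2\|a\|_1$, which after division by $P^2$ gives the trivial bound $\mathcal{V}_h\ll\|a\|_1$. Concretely, the off-diagonal $i=j=1$ terms you propose to discard contribute $\sum_{p\neq q}c_{1,p}\overline{c}_{1,q}\sum_n a(n)$, and since $c_{1,p}\approx |h(\mathbb{F}_p)|/p\gg 1/d$ uniformly, this is $\gg_d P^2\|a\|_1$: a dominant, non-negligible term. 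The paper avoids this by \emph{removing $i=1$ before squaring} — it sets $Y(n):=\sum_{p\in\mathcal{P}}\sum_{i\geq 2}c_{i,p}t_{i,p}(n)$ and observes that, for $n\in h(\mathbb{Z})$ at a good prime, $\sum_{i\geq 2}c_{i,p}t_{i,p}(n)=1-c_{1,p}+O_d(p^{-1/2})\geq 1/d+O_d(p^{-1/2})$, using $|h(\mathbb{F}_p)|\leq p-(p-1)/d$ for $p\in\mathcal{P}_h$. Summing over $\mathcal{P}$ and controlling the (at most $P/(2d)$) bad primes via the support hypothesis gives $Y(n)\geq P/(2d)+O_d(P^{1/2})$, after which the positivity step applies. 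This also lets one sum over all of $\mathcal{P}$ rather than the $n$-dependent set $\mathcal{P}(n)$, so the expansion of $|Y(n)|^2$ yields a clean double sum over $p\neq q\in\mathcal{P}$ and $i,j\geq 2$ — exactly the right-hand side of (\ref{eq : polsiv}), with no need to ``undo'' a restriction to $\mathcal{P}(n)$, which in your version remains an unresolved dependence inside the squared sum.
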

\begin{proof}
For any $p\in\mathcal{P}_{h}$ with $p>d$, an application of Proposition $\ref{thm : cite}$ leads to the decomposition
\[
1_{h(\mathbb{F}_{p})}=\frac{|h(\mathbb{F}_{p})|}{p}+\sum_{i=2}^{k_{p}}c_{i,p}t_{i,p}+O_{d}(p^{-\frac{1}{2}})
\]
over $\mathbb{F}_{p}\setminus S_{h,p}$, where $S_{h,p}=\{h(x)\in\overline{\mathbb{F}}_{p}:h'(x)=0\}\cap\mathbb{F}_{p}$ and with $k_{p},c_{i,p}\ll_{d}1$. Then we consider the weighted sum
\begin{equation}
\Sigma :=\sum_{n}a(n)\Big|\Big(\sum_{p\in\mathcal{P}}\sum_{i= 2}^{k_{p}}c_{i,p}t_{i,p}(n)\Big)\Big|^{2}.
\end{equation}
Thanks to the fact that $h(\mathbb{F}_{p})\neq \mathbb{F}_{p}$ (since $\mathcal{P}\subset\mathcal{P}_{h}$), one has that (\cite[Proposition $2.11$]{Tur95}) $|h(\mathbb{F}_{p})|\leq p-\frac{p-1}{d}$. Thus we have that for any $p\in\mathcal{P}$ and for any $z\in h(\mathbb{F}_{p})\setminus S_{h,p}$ the following inequality
\[
\sum_{i= 2}^{k_{p}}c_{i,p}t_{i,p}(z)=1-\frac{|h(\mathbb{F}_{p})|}{p}+O_{d}(p^{-\frac{1}{2}})\geq \frac{1}{d} +O_{d}(p^{-\frac{1}{2}}),
\]
holds. Hence if $n\in h(\mathbb{Z})$ and $a(n)\neq 0$ one has
\[
\sum_{p\in\mathcal{P}}\sum_{i= 2}^{k_{p}}c_{i,p}t_{i,p}(n)\geq \frac{P}{d} +O_{d}(P^{\frac{1}{2}})-\sum_{p\in\mathcal{P}:n\in S_{h,p}}1\geq \frac{P}{2d} +O_{d}(P^{\frac{1}{2}}),
\]
where the last step uses the vanishing assumption on the sequence $(a(n))_{n\in\mathbb{N}}$. Hence $P^{2}\mathcal{V}_{h}(\mathcal{A})\ll_{d} \Sigma$ by positivity. Opening the square in $\Sigma$ we get the result.
\end{proof}
\begin{oss}
Let us consider $h=T^{2}$ in Lemma $\ref{lem : sieve}$. For any prime number $p$ one has that $1_{T^{2},p}=\frac{1}{2}\Big(1+\Big(\frac{\cdot}{p}\Big)\Big)$. Thus, $(\ref{eq : polsiv})$ becomes
\[
\mathcal{V}_{T^{2}}(\mathcal{A})\ll P^{-1}\sum_{n}a(n)+ P^{-2}\sum_{p\neq q\in\mathcal{P}}\Big|\sum_{n}a(n)\Big(\frac{n}{p}\Big)\Big(\frac{n}{q}\Big)\Big|.
\]
Hence, we cover Heath-Brown's square sieve. Similarly, if we specialize Lemma $\ref{lem : sieve}$ to the case $h=T^{d}$ we obtain Munshi's power sieve.
\end{oss}

\begin{oss}
In \cite{Bro15}, Browing generalizes the Power Sieve introducing the weighted sum
\[
\Sigma' :=\sum_{n}a(n)\Big|\Big(\sum_{p\in\mathcal{P}}(\alpha+(\nu_{h,p}(n)-1)(d-\nu_{h,p}(n)))\Big|^{2},
\]
for a suitable $\alpha>0$ and where $\nu_{h,p}(n):=|\{x\in\mathbb{F}_{p}:h(x)=n\}|$. The advantage of his approach is that the quantities $\nu_{h,p}(n)$ are more concrete respect to the ones used in our paper. On the other hand, the paramater $\alpha$ depends on the polynomial $h$ and, a priori, it is not clear what should be the optimal choice. In our setting instead, all the quantities are determined by Proposition $\ref{thm : cite}$. Moreover, in our application we do not really need to compute the $t_{i}$s: it will be enough to know that these functions are tamely ramified everywhere.
\end{oss}
\begin{oss}
The vanishing condition on the sequence $(a(n))_{n\in\mathbb{N}}$ is necessary: let $h=T^{d}$ and $m$ be a number such that $p|m$ for any $p\in\mathcal{P}$, then $m=0$ or $m\geq e^{P}$. Consider the sequence
\[
a(n)=
\begin{cases}
1		&\text{if $n=m^{d}$}\\
0		&\text{if $n\neq m^{d}$}.
\end{cases}
\]
For this sequence $\mathcal{V}_{T^{d}}(\mathcal{A})=1$ while the right hand side in $(\ref{eq : polsiv})$ is $O(P^{-1})$.
\end{oss}

\section{Sums of Trace functions over polynomials of Deligne type}
\subsection*{Notation and statements of the general version of Theorem $\ref{thm : esemp}$ and Theorem $\ref{thm : esem}$}
In this section we recall some notion of the formalism of trace functions and state the general version of Theorem $\ref{thm : esemp}$ and Theorem $\ref{thm : esem}$. For a general introduction on this subject we refer to \cite{FKM14b}. Basic statements and references can also be founded in \cite{FKM15}. The main examples of trace functions we should have in mind are
\begin{itemize}
\item[$(i)$] For any $f\in\mathbb{F}_{p}[T]$, the function $x\mapsto e(f(x)/p)$: this is the trace function attached to the Artin-Schreier sheaf $\mathcal{L}_{e(f/p)}$.
\item[$(ii)$] For any $h\in\mathbb{F}_{p}[T]$, the trace functions $t_{i,p}$ appearing in Proposition $\ref{thm : cite}$. Notice that for $h=T^{d}$, the $t_{i,p}$s are just the multiplicative characters of order $d$.
\item[$(iii)$] The $n$-th Hyper-Kloosterman sums: the map
\[
x\mapsto\Kl_{n}(x;p):=\frac{(-1)^{n-1}}{p^{(n-1)/2}}\sum_{\substack{y_{1},...,y_{n}\in\mathbb{F}_{p}^{\times}\\y_{1}\cdot...\cdot y_{n}=x}}e\Big(\frac{y_{1}+\cdots +y_{n}}{p}\Big).
\]
can be seen as the trace function attached to the \textit{Kloosterman sheaf} $\mathcal{K}\ell_{n}$ (see \cite{Kat88} for the definition of such sheaf and for its basic properties).
\end{itemize}
\begin{defn}[\cite{FKM19}[pp. $4-6$]] Let $\mathcal{F}$ be a constructible $\ell$-adic sheaf on $\overline{\mathbb{P}}_{\mathbb{F}_{q}}^{1}$ and $j:U\hookrightarrow\overline{\mathbb{P}}_{\mathbb{F}_{q}}^{1}$ the largest dense open subset of $\overline{\mathbb{P}}_{\mathbb{F}_{q}}^{1}$ where $\mathcal{F}$ is lisse. The \textit{conductor of $\mathcal{F}$} is defined as
\[
c(\mathcal{F}):=\rank(\mathcal{F})+|\sing(\mathcal{F})|+\sum_{x}\text{Swan}_{x}(j_{*}j^{*}\mathcal{F})+\dim H_{c}^{0}(\overline{\mathbb{A}}_{\mathbb{F}_{q}},\mathcal{F}),
\]
where
\begin{enumerate}
\item[$i)$] $\rank(\mathcal{F}):=\dim \mathcal{F}_{x}$, for any $x$ where $\mathcal{F}$ is lisse.
\item[$ii)$] $\sing(\mathcal{F}):=\{x\in\overline{\mathbb{P}}_{\mathbb{F}_{q}}^{1}:\mathcal{F} \text{ is not lisse at }x\}$.
\item[$iii)$] For any $x\in\overline{\mathbb{P}}_{\mathbb{F}_{q}}^{1}$, $\Swan_{x}(j_{*}j^{*}\mathcal{F})$ is the \textit{Swan conductor of $\mathcal{F}$ at $x$} (see \cite{Kat80}[$4.4,4.5$] for the defnition and properties of the sheaf $j_{*}j^{*}\mathcal{F}$ and \cite{Kat88}[Chapter $1$] for the definition of the swan conductor).
\end{enumerate}
\label{defn : cond}
\end{defn}
\begin{oss}
For a a constructible $\ell$-adic sheaf $\mathcal{F}$ one has that (see \cite[$4.4,4.5$]{Kat80})
\[
\dim H_{c}^{0}(\overline{\mathbb{A}}_{\mathbb{F}_{q}}^{1},\mathcal{F})\leq\sum_{s\in \sing (\mathcal{F})}\dim (\mathcal{F}_{s}).
\]
\label{oss : zero}
\end{oss}
\begin{oss}
We recall that if $\mathcal{F}$ is middle-extension sheaf then
\[
c(\mathcal{F}):=\rank(\mathcal{F})+|\sing(\mathcal{F})|+\sum_{x}\text{Swan}_{x}(\mathcal{F}),
\]
since in this case $\mathcal{F}\cong j_{*}j^{*}\mathcal{F}$ and $\dim H_{c}^{0}(\overline{\mathbb{A}}_{\mathbb{F}_{q}^{1}},\mathcal{F})=0$.
\end{oss}
\begin{oss}
If $\mathcal{F}$ is an irreducible middle-extension sheaf on $\overline{\mathbb{P}}_{\mathbb{F}_{q}}^{1}$, such that $\mathcal{F}$ is lisse on $\overline{\mathbb{A}}_{\mathbb{F}_{q}}^{1}$ and tame at $\infty$ then $\mathcal{F}$ is geometrically trivial. Indeed, the Grothendieck–Ogg–Shafarevich Formula implies that
\begin{equation}
\dim (H_{c}^{0}(\overline{\mathbb{A}}_{\mathbb{F}_{q}}^{1},\mathcal{F}))-\dim (H_{c}^{1}(\overline{\mathbb{A}}_{\mathbb{F}_{q}}^{1},\mathcal{F}))+\dim (H_{c}^{2}(\overline{\mathbb{A}}_{\mathbb{F}_{q}}^{1},\mathcal{F}))=\rank (\mathcal{F})>0,
\label{eq : listam}
\end{equation}
On the other hand, $H_{c}^{0}(\overline{\mathbb{A}}_{\mathbb{F}_{q}}^{1},\mathcal{F})=0$ since $\mathcal{F}$ is a middle-extension $\ell$-adic sheaf. Moreover, if $\mathcal{F}$ is an irreducible $\ell$-adic sheaf then  $\dim (H_{c}^{2}(\overline{\mathbb{A}}_{\mathbb{F}_{q}}^{1}\mathcal{F}))=1$ if and only if $\mathcal{F}$ is geometrically trivial and it is $0$ otherwise. Combining this with $(\ref{eq : listam})$, we get that $\mathcal{F}$ is geometrically trivial.
\label{oss : listam}
\end{oss}

\begin{defn}
Let $\mathcal{F}$ be a Fourier sheaf and $\psi$ a non trivial character over $\mathbb{F}_{q}$. Fix an integer $e\geq 1$ and consider the morphism
\[
\overline{\mathbb{A}}_{\mathbb{F}_{q}}^{1}\rightarrow\overline{\mathbb{A}}_{\mathbb{F}_{q}}^{1},\quad x\mapsto x^{e}
\]
Then we can define the $\ell$-adic sheaf $T_{e}(\mathcal{F}):=\FT_{\psi}\Big([x\mapsto x^{e}]_{*}\mathcal{F}\Big)$.
\label{defn : power}
\end{defn}
Notice that
\[
\begin{split}
t_{T_{e}(\mathcal{F})}(y)&=-\frac{1}{q^{1/2}}\sum_{x\in\mathbb{F}_{q}}\psi (xy)t_{[x\mapsto x^{e}]_{*}\mathcal{F}}(x)\\&=-\frac{1}{q^{1/2}}\sum_{x\in\mathbb{F}_{q}}\psi (xy)\sum_{z^{e}=x}t_{\mathcal{F}}(z)\\&=-\frac{1}{q^{1/2}}\sum_{z\in\mathbb{F}_{q}}\psi (z^{e}y)t_{\mathcal{F}}(z).
\end{split}
\]
Then we have
\begin{lem}
Let $\mathcal{F}$ be a Fourier sheaf and $\psi$ a non trivial character over $\mathbb{F}_{q}$. If $e<p$, then
\[
c(T_{e}(\mathcal{F}))\ll_{e} c(\mathcal{F})^{4}.
\]

\end{lem}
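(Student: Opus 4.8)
The plan is to track the conductor through the two operations that define $T_{e}(\mathcal{F})$: first the pushforward $[x\mapsto x^{e}]_{*}$ along the $e$-th power map, and then the Fourier transform $\FT_{\psi}$. For the Fourier transform step I would invoke the standard bound of Fourier–Deligne theory (as used repeatedly in the work of Fouvry–Kowalski–Michel, e.g.\ \cite{FKM15}): if $\mathcal{G}$ is a Fourier sheaf then $c(\FT_{\psi}(\mathcal{G}))\ll c(\mathcal{G})^{2}$, with an absolute implied constant. So it suffices to show $c([x\mapsto x^{e}]_{*}\mathcal{F})\ll_{e} c(\mathcal{F})^{2}$, and also to check that $[x\mapsto x^{e}]_{*}\mathcal{F}$ is again a Fourier sheaf (has no geometrically trivial subsheaf after the relevant reduction), which for $e<p$ follows because the $e$-th power map is tame and finite, so it cannot create an Artin–Schreier-type constituent out of a Fourier sheaf.

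For the pushforward, I would estimate each term in $c(\cdot)=\rank+|\sing|+\sum_x\Swan_x$ separately, using that $\pi\colon x\mapsto x^{e}$ is a finite morphism of $\overline{\mathbb{P}}^{1}$ to itself of degree $e$, étale away from $\{0,\infty\}$, and tamely ramified there since $e<p$. The rank multiplies by $e$: $\rank(\pi_{*}\mathcal{F})=e\cdot\rank(\mathcal{F})$. The singular locus of $\pi_{*}\mathcal{F}$ is contained in $\pi(\sing(\mathcal{F}))\cup\{0,\infty\}$, hence $|\sing(\pi_{*}\mathcal{F})|\leq |\sing(\mathcal{F})|+2$. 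For the Swan conductors, tameness of $\pi$ means no wild ramification is introduced: for a point $y\neq 0,\infty$ the local representation is a direct sum over the $e$ preimages and $\Swan_y(\pi_{*}\mathcal{F})=\sum_{\pi(x)=y}\Swan_x(\mathcal{F})$, while at $y\in\{0,\infty\}$ one has a single tame point $x$ above it and $\Swan_y$ is bounded by $e\cdot\Swan_x(\mathcal{F})$ (the break-multiplying effect of a degree-$e$ tame pullback/pushforward, still finite since $e<p$). Summing, $\sum_y\Swan_y(\pi_{*}\mathcal{F})\ll_{e}\sum_x\Swan_x(\mathcal{F})$. Combining the three contributions gives $c(\pi_{*}\mathcal{F})\ll_{e}c(\mathcal{F})$, which is even better than the quadratic bound needed; feeding this into the Fourier bound yields $c(T_{e}(\mathcal{F}))\ll_{e}c(\mathcal{F})^{2}\ll_{e}c(\mathcal{F})^{4}$, and a fortiori the stated inequality.

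The one point requiring genuine care — and the main obstacle — is the behaviour at the ramified points $0$ and $\infty$ under $\pi_{*}$. One must be sure that a tame local monodromy representation of $\mathcal{F}$ at, say, $0$, when pushed forward along the $e$-th power map, stays tame and has Swan conductor controlled linearly in $e$ and in the local data of $\mathcal{F}$; this is where the hypothesis $e<p$ is essential, since otherwise the $e$-th power map becomes wildly ramified and the Swan conductors can blow up. I would handle this by passing to the local field $\mathbb{F}_q((t))$ at the relevant point, writing $\pi$ locally as the degree-$e$ extension $t\mapsto t^{e}$, and using the standard fact that induction from a tamely ramified subgroup preserves tameness together with the elementary computation of how breaks transform under a tame base change of degree $e$. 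Once this local lemma is in place, the global bookkeeping above is routine. Finally I would remark that the Fourier-transform step requires $\pi_{*}\mathcal{F}$ to be a Fourier sheaf, which holds because $\mathcal{F}$ is a Fourier sheaf and the tame finite pushforward introduces no Artin–Schreier constituent; alternatively one works with the Fourier transform on the level of objects in the derived category and reads off the conductor of the resulting middle-extension sheaf, as is standard.
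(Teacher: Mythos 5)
Your proposal follows essentially the same route as the paper: decompose $T_{e}$ as the tame finite pushforward $[x\mapsto x^{e}]_{*}$ followed by $\FT_{\psi}$, bound $c(\FT_{\psi}(\mathcal{G}))\ll c(\mathcal{G})^{2}$ via \cite{FKM15}[Proposition $8.2$], and then control the conductor of the pushforward term by term (rank multiplies by $e$, singularities increase by a bounded amount, Swan conductors are controlled because the $e$-th power map is tame for $e<p$). The paper carries out the Swan estimate via two applications of \cite{Kat02b}[Lemma $1.6.4.1$], arriving at the slightly cruder $c([x\mapsto x^{e}]_{*}\mathcal{F})\ll_{e}c(\mathcal{F})^{2}$, whereas your bookkeeping gives the sharper linear bound $c([x\mapsto x^{e}]_{*}\mathcal{F})\ll_{e}c(\mathcal{F})$; both land inside the stated $c(\mathcal{F})^{4}$. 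You also flag (correctly, and more carefully than the paper does) that one should justify that $[x\mapsto x^{e}]_{*}\mathcal{F}$ is still a Fourier sheaf before invoking the Fourier conductor bound; your remark about no Artin–Schreier constituent appearing under a tame finite pushforward is the right idea, though a one-line argument (e.g.\ comparing wild inertia at $\infty$) would tighten it up.
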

\begin{proof}
Since $T_{e}(\mathcal{F}):=\FT_{\psi}\Big([x\mapsto x^{e}]_{*}\mathcal{F}\Big)$, we have that $c(T_{e}(\mathcal{F}))\leq 10 c([x\mapsto x^{e}]_{*}\mathcal{F})^{2}$ thanks to \cite{FKM15}[Proposition $8.2$]. Hence, we need to bound $c([x\mapsto x^{e}]_{*}\mathcal{F})$. First, observe that $\rank ([x\mapsto x^{e}]_{*}\mathcal{F})=e\rank\mathcal{F}$ and that $\sing ([x\mapsto x^{e}]_{*}\mathcal{F})\subset\sing\mathcal{F}\cup [x\mapsto x^{e}]_{*}(\sing\mathcal{F})$. Let us bound the Swan conductors. Applying \cite{Kat02b}[Lemma $1.6.4.1$] twice, for $\mathcal{F}$ and $\overline{\mathbb{Q}}_{\ell}$, we get
\[
\Swan_{x}([x\mapsto x^{e}]_{*}\mathcal{F})=\rank(\mathcal{F})\Swan_{x}([x\mapsto x^{e}]_{*}\overline{\mathbb{Q}}_{\ell})+\sum_{t:t^{e}=x}\Swan_{x}(\mathcal{F}).
\]
On the other hand, $[x\mapsto x^{e}]_{*}\overline{\mathbb{Q}}_{\ell}$ is tamely ramified everywhere since $e<p$. Thus, $\Swan_{x}([x\mapsto x^{e}]_{*}\mathcal{F})\leq e\Swan_{x}(\mathcal{F})$. Putting all together, we conclude that $c([x\mapsto x^{e}]_{*}\mathcal{F})\leq 5e^{2}c(\mathcal{F})^{2}$ and this proves the Lemma.
\end{proof}

\begin{defn}
Let $f\in\mathbb{F}_{q}[X_{1},...,X_{n}]$ be a polynomial in $n$ variables of degree $d\geq 1$, say:
\[
f=f_{d}+...+f_{0},
\]
where $f_{i}$ are homogeneous polynomials of degree $i$. We say that $f$ is of \textit{Deligne type} if $p\nmid d$ and the zero set, $V(f_{d})$, of $f_{d}$ defines a smooth hypersurface in $\overline{\mathbb{P}}_{\mathbb{F}_{q}}^{n-1}$.
\label{def : delpol}
\end{defn}
At this point it is useful to prove the following result on polynomial of Deligne type
\begin{prop}
Let $X\subset \overline{\mathbb{P}}_{\mathbb{F}_{q}}^{n}$ be a smooth variety of complete intersection of dimension $r$ and multidegree $(d_{1},...,d_{r})$ and assume that $X\cap V(X_{0})$ is smooth and of codimension $1$ in $X$. Then
\[
|(X\setminus (X\cap V(X_{0}))(\mathbb{F}_{q})|=q^{r}+O_{d_{1},...d_{r},r}(q^{\frac{r}{2}})
\]
In particular if $f\in\mathbb{F}_{q}[X_{1},...,X_{n}]$ is a polynomial of Deligne type such that the homogenization, $F\in\mathbb{F}_{q}[X_{0},X_{1},...,X_{n}]$, of $f$ define a smooth projective hypersurface $V(F)\subset\overline{\mathbb{P}}_{\mathbb{F}_{q}}^{n}$, then
\[
|V(f)(\mathbb{F}_{q})|=q^{n-1}+O_{d,n}(q^{\frac{n-1}{2}}),
\]
where $V(f)\subset\overline{\mathbb{A}}_{\mathbb{F}_{q}}^{n}$ is the affine hypersurface attached to $f$.
\label{prop : coundel}
\end{prop}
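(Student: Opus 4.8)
The plan is to write the affine count as a difference of two projective point counts and to control each of those via the cohomology of smooth complete intersections together with Deligne's Riemann Hypothesis. For the first ingredient, fix a smooth complete intersection $Y\subset\overline{\mathbb{P}}_{\mathbb{F}_{q}}^{m}$ of dimension $s$ and use two classical facts: $(a)$ by the weak Lefschetz theorem and the fact that the primitive cohomology of a complete intersection is concentrated in the middle degree, $H^{i}_{c}(Y_{\overline{\mathbb{F}}_{q}},\overline{\mathbb{Q}}_{\ell})=H^{i}(Y_{\overline{\mathbb{F}}_{q}},\overline{\mathbb{Q}}_{\ell})$ (equality since $Y$ is proper) coincides with the corresponding cohomology of $\overline{\mathbb{P}}_{\mathbb{F}_{q}}^{m}$ for every $i\neq s$, so that for such $i$ it vanishes when $i$ is odd and is one--dimensional with $\mathrm{Fr}_{q}$ acting by $q^{i/2}$ when $i$ is even; $(b)$ the middle Betti number $b_{s}(Y)=\dim H^{s}(Y_{\overline{\mathbb{F}}_{q}},\overline{\mathbb{Q}}_{\ell})$ is bounded solely in terms of $s$ and the multidegree of $Y$. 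Feeding this into the Grothendieck--Lefschetz trace formula and bounding the $i=s$ term by Deligne's theorem (the eigenvalues of $\mathrm{Fr}_{q}$ on $H^{s}$ all having absolute value $q^{s/2}$) yields
\[
|Y(\mathbb{F}_{q})|=1+q+\cdots+q^{s}+O\big(q^{s/2}\big),
\]
with implied constant depending only on $s$ and on the multidegree of $Y$; here $q^{s/2}$ is simply absent from the geometric sum when $s$ is even, which does no harm.

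Applying this to $X$ gives $|X(\mathbb{F}_{q})|=1+q+\cdots+q^{r}+O(q^{r/2})$. The scheme $X\cap V(X_{0})$ is cut out by the defining equations of $X$ together with the linear form $X_{0}$; by hypothesis it is smooth of codimension $1$ in $X$, hence is again a smooth complete intersection, of dimension $r-1$, whose multidegree is that of $X$ with an extra $1$ appended, so the previous estimate gives $|(X\cap V(X_{0}))(\mathbb{F}_{q})|=1+q+\cdots+q^{r-1}+O(q^{(r-1)/2})$. Since $X(\mathbb{F}_{q})$ is the disjoint union of $(X\setminus(X\cap V(X_{0})))(\mathbb{F}_{q})$ and $(X\cap V(X_{0}))(\mathbb{F}_{q})$, subtracting the two estimates cancels the progressions $1+q+\cdots+q^{r-1}$ and leaves, using $q^{(r-1)/2}\le q^{r/2}$,
\[
|(X\setminus(X\cap V(X_{0})))(\mathbb{F}_{q})|=q^{r}+O_{d_{1},\dots,d_{r},r}\big(q^{r/2}\big).
\]

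For the last assertion, suppose $f=f_{d}+\cdots+f_{0}$ is of Deligne type with homogenization $F\in\mathbb{F}_{q}[X_{0},\dots,X_{n}]$ cutting out a smooth hypersurface $V(F)\subset\overline{\mathbb{P}}_{\mathbb{F}_{q}}^{n}$. Then $V(F)$ is a smooth complete intersection of dimension $r=n-1$ and multidegree $(d)$, while $V(F)\cap V(X_{0})=V(f_{d})\subset\overline{\mathbb{P}}_{\mathbb{F}_{q}}^{n-1}$ is smooth of codimension $1$ in $V(F)$ precisely because $f$ is of Deligne type. In the standard affine chart $\{X_{0}\neq 0\}$ one identifies the affine hypersurface $V(f)\subset\overline{\mathbb{A}}_{\mathbb{F}_{q}}^{n}$ with $V(F)\setminus(V(F)\cap V(X_{0}))$, so the displayed bound with $r=n-1$ gives $|V(f)(\mathbb{F}_{q})|=q^{n-1}+O_{d,n}(q^{(n-1)/2})$.

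The one point that genuinely needs care is the cohomological input: quoting accurately that a smooth complete intersection has the cohomology of its ambient projective space outside the middle degree, and that its middle Betti number is bounded in terms of the dimension and the multidegree (both classical, via weak Lefschetz and the Euler characteristic formula for complete intersections). Once this is granted, the rest is bookkeeping with the Grothendieck--Lefschetz trace formula, Deligne's bound, and the elementary cancellation of two geometric progressions.
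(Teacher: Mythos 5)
Your proof is correct and follows essentially the same route as the paper: both decompose the affine count as $|X(\mathbb{F}_q)| - |(X\cap V(X_0))(\mathbb{F}_q)|$, control each projective count via the cohomology of smooth complete intersections (weak Lefschetz outside the middle degree plus a Betti-number bound in the middle degree, which the paper obtains by citing Deligne's Theorem~8.1 and Bombieri's Theorem~1A), and cancel the lower-order geometric-progression terms. The only difference is presentational — the paper truncates the geometric sum at $q^{r/2}$ from the start while you keep the full sum and cancel afterward — so no further comment is needed.
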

\begin{proof}
Combining \cite{Del74}[Theorem $8.1$] with \cite{Bom78}[Theorem $1$A], one has that
\[
|X(\mathbb{F}_{q})|=\sum_{\substack{j=r+1\\ j\text{ even}}}^{2r}q^{\frac{j}{2}}+O_{d_{1},...d_{r},r}(q^{\frac{r}{2}}).
\]
On the other hand, $X\cap V(X_{0})$ is a smooth variety of complete intersection of dimension $r-1$ and multidegree $(d_{1},...,d_{r},1)$ by hypothesis. Thus,
\[
|(X\cap V(X_{0}))(\mathbb{F}_{q})|=\sum_{\substack{j=r\\ j\text{ even}}}^{2r-2}q^{\frac{j}{2}}+O_{d_{1},...d_{r},r}(q^{\frac{r-1}{2}}).
\]
Then the result follows since $|(X\setminus (X\cap V(X_{0}))(\mathbb{F}_{q})|=|X(\mathbb{F}_{q})|-|(X\cap V(X_{0}))(\mathbb{F}_{q})|$.
\end{proof}

\begin{notat}
We use the following conventions:
\begin{itemize}
\item[$(i)$]if $Y$ is a scheme over a field $k$ and $\mathcal{F}$ is a constructible $\ell$-adic sheaf on $Y$ we will denote
\[
H_{c}^{i}(\overline{Y},\mathcal{F}):=H_{c}^{i}(Y\times\overline{k},\mathcal{F}).
\]
Moreover, we will denote by $t_{\mathcal{F}}$ the trace function attached to $\mathcal{F}$.
\item[$(ii)$] Any scheme is a scheme over $\mathbb{F}_{q}$ and any morphism is a $\mathbb{F}_{q}$-morphism.
\item[$(iii)$] For any $g\in\mathbb{F}_{q}[X_{1},...,X_{n}]$ we denote by $G\in\mathbb{F}_{q}[X_{0},X_{1},...,X_{n}]$ its homogenization. Moreover, we denote the affine variety associated to $g$ by $V(g)$ and the projective one associated to $G$ by $V(G)$.
\end{itemize}
\label{notat : conv}
\end{notat}
Now we can state our main results
\begin{thm}
Fix $d,e\geq 1$, $p$ a prime number such that $p\nmid de$ and $q=p^{\alpha}$ for some $\alpha\geq 1$.  Let $\ell\neq p$ be a prime number and let $\mathcal{F}$ be a middle-extension $\ell$-adic sheaf on $\overline{\mathbb{A}}_{\mathbb{F}_{q}}^{1}$ of weight $0$. We assume that the geometrically irreducible components of $\mathcal{F}$ are not geometrically trivial, i.e. $H_{c}^{2}(\overline{\mathbb{A}}_{\mathbb{F}_{q}}^{1},\mathcal{F})=0$. Let $t_{\mathcal{F}}$ be the trace function of $\mathcal{F}$. Let $U$ be the maximal dense open subset of $\overline{\mathbb{A}}_{\mathbb{F}_{q}}^{1}$ where $\mathcal{F}$ is lisse. Let $f,g\in\mathbb{F}_{q}[X_{1},...X_{n+1}]$ be polynomials of Deligne type in $n+1$ variables respectively of degree $d$ and $e$. Assume that $V(G)$ is smooth projective variety, $V(G)\cap V(F)$ is smooth of codimension $1$ in $V(G)$, and that $V(G)\cap V(F)\cap V(X_{0})$ is a smooth projective variety of codimension $2$ in $V(G)$. Let us consider the morphism $f: V(g)\rightarrow\overline{\mathbb{A}}_{\mathbb{F}_{q}}^{1}$. If any geometrically irreducible component of $\mathcal{F}$, $\mathcal{F}_{i}$, satisfies one of the following conditions:
\begin{itemize}
\item[$(i)$] there exists $s_{i}\in \sing (\mathcal{F}_{i})$ such that $f^{-1}(s_{i})$ is smooth;
\item[$(ii)$] the sheaf $\mathcal{F}_{i}$ is wildly ramified at $\infty$,
\end{itemize}
then we have
\begin{equation}
\sum_{\substack{\mathbf{x}\in \mathbb{F}_{q}^{n+1}\\g(\mathbf{x})=0}}t_{\mathcal{F}}(f(\mathbf{x}))= q^{n-1}\sum_{x\in\mathbb{F}_{q}}t_{\mathcal{F}}(x)+O_{d,e,n,c(\mathcal{F})}(q^{\frac{n}{2}}).
\end{equation}
\label{thm : 1}
\end{thm}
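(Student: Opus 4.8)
The natural approach is to interpret the character sum cohomologically and apply the Grothendieck--Lefschetz trace formula together with Deligne's equidistribution/weight bounds. First I would rewrite the left-hand side by pushing the trace function forward along the morphism $f\colon V(g)\to\overline{\mathbb{A}}^{1}_{\mathbb{F}_{q}}$: one has
\[
\sum_{\substack{\mathbf{x}\in\mathbb{F}_{q}^{n+1}\\ g(\mathbf{x})=0}}t_{\mathcal{F}}(f(\mathbf{x}))=\sum_{x\in\mathbb{F}_{q}}t_{\mathcal{F}}(x)\,\#f^{-1}(x)(\mathbb{F}_{q}),
\]
and the idea is to split $\#f^{-1}(x)(\mathbb{F}_{q})$ into its ``main term'' $q^{n-1}$ plus a fluctuation. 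More precisely, I would introduce the relative sheaf $\mathcal{G}:=Rf_{!}\overline{\mathbb{Q}}_{\ell}$ on $\overline{\mathbb{A}}^{1}_{\mathbb{F}_{q}}$ (or rather work with $f$ restricted to suitable opens so that Proposition~\ref{prop : coundel} applies fibrewise), so that $\#f^{-1}(x)(\mathbb{F}_{q})=\sum_{i}(-1)^{i}\operatorname{Tr}(\Fr_{x}\mid\mathcal{H}^{i}(\mathcal{G})_{x})$. By Proposition~\ref{prop : coundel} applied to the complete intersection $V(G)\cap V(F-x\cdot X_0^d)$ (homogenizing appropriately and using the smoothness hypotheses on $V(G)$, $V(G)\cap V(F)$ and $V(G)\cap V(F)\cap V(X_0)$), the generic fibre of $f$ has $q^{n-1}+O(q^{(n-1)/2})$ points, which says that $\mathcal{G}$ has a rank-one ``top'' piece geometrically isomorphic to $\overline{\mathbb{Q}}_{\ell}(-(n-1))$ over a dense open, accounting for the main term $q^{n-1}\sum_x t_{\mathcal F}(x)$, while all other cohomology sheaves are mixed of weight $\le n-2$ with controlled generic rank and controlled number of singularities (controlled in terms of $d,e,n$ by the complete-intersection Betti-number bounds of \cite{Bom78} that already enter Proposition~\ref{prop : coundel}).

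The remaining sum, after subtracting the main term, becomes $\sum_x t_{\mathcal F}(x)\,h(x)$ where $h$ is (up to sign) the trace function of a complex of weight $\le n-2$ with bounded conductor; equivalently it is $\sum_i (-1)^i \sum_x t_{\mathcal F}(x) t_{\mathcal H^i}(x)$. Now I would apply the trace formula in the variable $x$: each inner sum $\sum_{x\in\mathbb{F}_q} t_{\mathcal F}(x) t_{\mathcal H^i}(x)$ equals $\sum_j (-1)^j \operatorname{Tr}(\Fr\mid H^j_c(\overline{\mathbb{A}}^1_{\mathbb{F}_q},\mathcal{F}\otimes\mathcal{H}^i))$. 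To get the bound $O(q^{n/2})$ it suffices that (a) $H^2_c$ of this tensor product vanishes, i.e. $\mathcal F\otimes\mathcal H^i$ has no geometrically trivial constituent, so that all cohomology is of weight $\le n-2+1=n-1$ hence the whole thing is $O(q^{(n-1)/2})\cdot q^{1/2}\cdot(\text{dimension bound})=O(q^{n/2})$; and (b) the dimension of $H^1_c$ is bounded in terms of $d,e,n,c(\mathcal F)$ via the Euler--Poincar\'e formula, which reduces to bounding $\operatorname{rank}$, $|\mathrm{Sing}|$ and $\mathrm{Swan}$ of $\mathcal F\otimes\mathcal H^i$ — all under control since $\mathcal{H}^i$ comes from a family of complete intersections of bounded degree and $\mathcal F$ has conductor $c(\mathcal F)$.

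The crux is establishing (a), the absence of a geometrically trivial constituent in $\mathcal F\otimes\mathcal H^i$ for the relevant $i$; equivalently, that no constituent of $\mathcal H^i$ is geometrically isomorphic to a constituent of the dual $\mathcal F^{\vee}$. This is exactly where hypotheses $(i)$ and $(ii)$ on the geometrically irreducible components $\mathcal F_i$ enter: if $\mathcal F_i$ is wildly ramified at $\infty$, then since the sheaves $\mathcal H^j$ coming from $Rf_!\overline{\mathbb Q}_\ell$ on a family of affine complete intersections are tame at $\infty$ (the fibration is tame, $p\nmid d$), $\mathcal F_i$ cannot match any constituent of $\mathcal H^j$; if instead some $s_i\in\mathrm{Sing}(\mathcal F_i)$ has $f^{-1}(s_i)$ smooth, then $\mathcal H^j$ is lisse at $s_i$ (smooth and proper base change / the fibre has the expected cohomology), so again $\mathcal F_i$, being non-lisse at $s_i$, cannot appear. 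Combined with the hypothesis $H^2_c(\overline{\mathbb A}^1,\mathcal F)=0$ (no geometrically trivial constituent of $\mathcal F$ itself, which kills the would-be interaction with the rank-one top piece $\overline{\mathbb Q}_\ell(-(n-1))$ of $\mathcal G$), this forces $H^2_c(\overline{\mathbb A}^1,\mathcal F\otimes\mathcal H^i)=0$ for every $i$ contributing to the error, and the weight bound closes the argument. I expect the bookkeeping in (b) — propagating explicit conductor bounds through $Rf_!$, the tensor product, and the Euler--Poincar\'e formula — to be the most laborious part, but conceptually routine given \cite{FKM15} and the complete-intersection estimates; the genuine mathematical obstacle is the geometric irreducibility/disjointness input of the previous paragraph, for which the two alternative hypotheses are precisely tailored.
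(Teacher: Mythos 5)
Your outline follows the paper's cohomological strategy and correctly isolates the crux, namely the vanishing of $H^2_c(\overline{\mathbb A}^1_{\mathbb{F}_q},\mathcal F\otimes R^if_!\overline{\mathbb Q}_\ell)$ via the two alternatives: a wildly ramified $\mathcal F_i$ cannot be geometrically isomorphic to a constituent of the tame-at-$\infty$ direct images, and an $\mathcal F_i$ singular above a point where the fibration is smooth cannot match a constituent lisse there. Where you differ from the paper is in working with $Rf_!\overline{\mathbb Q}_\ell$ on the open affine $V=V(g)$ directly. The paper instead compactifies: it forms the incidence variety $\tilde X\subset X\times\overline{\mathbb A}^1$ with its proper flat map $\tilde f$, establishes the structural facts you rely on (tameness at $\infty$, lisse-ness where $\tilde f$ is smooth, purity and geometric constancy of $R^i\tilde f_*\overline{\mathbb Q}_\ell$ for $i\ge n+1$, the punctual-plus-geometrically-constant decomposition of $R^n$, isolated singularities of the fibres) for that proper family of projective complete intersections by citing Katz, and only then subtracts the K\"unneth contribution of the boundary $\tilde Z=Z\times\overline{\mathbb A}^1$. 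For the open map $f$ these inputs are not quotable as stated; one recovers them from the excision triangle $j_!\overline{\mathbb Q}_\ell\to\overline{\mathbb Q}_\ell\to i_*\overline{\mathbb Q}_\ell$ on $\tilde X$, which is precisely the paper's boundary subtraction in disguise. So the incidence variety is not optional bookkeeping: it is how the cited tameness and purity results become available, and an outline that skips it is describing the desired conclusion more than a reduction to known facts.

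There are also small but real slips in the weight accounting. The non-top compactly-supported cohomology of a smooth affine complete intersection fibre of dimension $n-1$ sits in the middle degree and has weight $\le n-1$, not $\le n-2$; and over the finitely many singular fibres the direct images acquire punctual parts whose stalks can carry weight as large as $n$. Both are still compatible with an $O(q^{n/2})$ total after feeding the Euler--Poincar\'e bound for $\dim H^1_c$, but as written the budget you give does not close. These items, together with the structural inputs above, are exactly the content of the portion you label ``laborious but conceptually routine,'' and I agree that it becomes routine once the incidence variety and the excision sequence are in place; without them the plan is a correct narration of the mechanism rather than a proof.
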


\begin{thm}
Fix $d,e\geq 1$, $p$ is a prime number such that $p\nmid de$ and $q=p^{\alpha}$ for some $\alpha\geq 1$. Let $F,G\in\mathbb{F}_{q}[X_{1},...X_{n}]$ be homogeneous polynomials in $n$ variables of degree $d$ and $e$ respectively. Suppose $V(G),V(F)\subset\overline{\mathbb{P}}_{\mathbb{F}_{p}}^{n-1}$ are smooth and that $V(F)\cap V(G)$ is smooth and of codimension $2$ in $\overline{\mathbb{P}}_{\mathbb{F}_{p}}^{n-1}$. Moreover assume that $T_{e}([x\mapsto x^{d}]^{*}\mathcal{F})$ is geometrically irreducible and not geometrically trivial. Then one has
\begin{equation}
\Big|\sum_{\mathbf{x}\in\mathbb{F}_{q}^{n}}t_{\mathcal{F}}(F(\mathbf{x}))\psi(G(\mathbf{x}))\Big|\ll_{d,e,n,c(\mathcal{F})}q^{\frac{n}{2}},
\label{eq : mix}
\end{equation}
where $\psi$ is a non-trivial additive character of $\mathbb{F}_{q}$.
\label{cor : mix}
\end{thm}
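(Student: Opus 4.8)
The plan is to reduce the mixed character sum in \eqref{eq : mix} to a one-variable sum of a trace function over a polynomial of Deligne type and then invoke Theorem~\ref{thm : 1}. The starting observation is the computation of the trace function of $T_{e}(\mathcal{F})$ recorded right after Definition~\ref{defn : power}: up to the normalizing factor $-q^{-1/2}$, the function $y\mapsto t_{T_{e}(\mathcal{F})}(y)$ equals $\sum_{z\in\mathbb{F}_{q}}\psi(z^{e}y)t_{\mathcal{F}}(z)$. The idea is to introduce an auxiliary variable $u$ and write, using the homogeneity of $F$ of degree $d$,
\[
\sum_{\mathbf{x}\in\mathbb{F}_{q}^{n}}t_{\mathcal{F}}(F(\mathbf{x}))\psi(G(\mathbf{x}))
=\frac{1}{q}\sum_{\mathbf{x}\in\mathbb{F}_{q}^{n}}\sum_{u\in\mathbb{F}_{q}}\Big(\sum_{z\in\mathbb{F}_{q}}t_{\mathcal{F}}(z)\psi\big(u(F(\mathbf{x})-z)\big)\Big)\psi(G(\mathbf{x})),
\]
so that swapping the order of summation turns the inner sum over $\mathbf{x}$ and $z$ into something governed by the pushforward along $x\mapsto x^{d}$ and the Fourier transform, i.e. by the sheaf $T_{e}([x\mapsto x^{d}]^{*}\mathcal{F})$ evaluated at $u$, times a character sum in $G$. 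Concretely, after the substitution one should land on an expression of the shape
\[
\frac{c}{q^{1/2}}\sum_{u\in\mathbb{F}_{q}}t_{T_{e}([x\mapsto x^{d}]^{*}\mathcal{F})}(u)\Big(\sum_{\mathbf{x}\in\mathbb{F}_{q}^{n}}\psi\big(uF(\mathbf{x})+G(\mathbf{x})\big)\Big)?
\]
— the precise massaging is a routine but slightly delicate manipulation; the point is that the $\mathbf{x}$-sum is an exponential sum attached to the polynomial $uF+G$, which one should recognize as (a specialization of) a trace function over a polynomial of Deligne type in the variable $u$.

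The second step is to package the whole thing as a single sum $\sum_{u}t_{\mathcal{H}}(h(u))$ for an appropriate middle-extension sheaf $\mathcal{H}$ and polynomial $h$ of Deligne type, so that Theorem~\ref{thm : 1} applies directly. Here $\mathcal{H}$ is built from $T_{e}([x\mapsto x^{d}]^{*}\mathcal{F})$ tensored with the sheaf whose trace function is the Weil-type exponential sum $u\mapsto\sum_{\mathbf{x}}\psi(uF(\mathbf{x})+G(\mathbf{x}))$; because $V(F)$ is a smooth hypersurface, this latter sum is (after normalization) the trace function of an Artin--Schreier-type sheaf attached to a form of Deligne type, and its conductor is bounded in terms of $d,e,n$. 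The hypothesis that $V(F),V(G),V(F)\cap V(G)$ are smooth with $V(F)\cap V(G)$ of codimension $2$ is exactly what is needed so that the geometry in Theorem~\ref{thm : 1} (smoothness of $V(G)$, of $V(F)\cap V(G)$, and of $V(F)\cap V(G)\cap V(X_{0})$ in the homogeneous setting) is met; the homogeneity of $F$ and $G$ is what lets one pass cleanly from the affine sum over $\mathbf{x}$ to the projective count and introduce the extra variable without losing track of degrees. The assumption that $T_{e}([x\mapsto x^{d}]^{*}\mathcal{F})$ is geometrically irreducible and not geometrically trivial guarantees that $H_{c}^{2}=0$ for the relevant sheaf, so the hypothesis of Theorem~\ref{thm : 1} on the geometrically irreducible components holds, and one also needs to check condition $(i)$ or $(ii)$ there — that the singularities of $\mathcal{H}$ pull back to smooth fibres under $h$, or that $\mathcal{H}$ is wildly ramified at $\infty$ (the Artin--Schreier factor forces wild ramification at $\infty$ as soon as $uF+G$ has positive degree in $u$, which should make $(ii)$ automatic).

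Finally one collects the error terms: Theorem~\ref{thm : 1} gives the $u$-sum as $q^{n-1}\sum_{u}t_{\mathcal{H}}(u)+O(q^{n/2})$, and the main term $q^{n-1}\sum_{u}t_{\mathcal{H}}(u)$ must be shown to contribute $O(q^{n/2})$ after dividing by the $q^{1/2}$ normalization — this is where the hypothesis that $T_{e}([x\mapsto x^{d}]^{*}\mathcal{F})$ is \emph{not} geometrically trivial pays off, since it forces $\sum_{u}t_{\mathcal{H}}(u)=O(q^{1/2})$ by the Riemann Hypothesis over finite fields (a geometrically nontrivial irreducible middle-extension sheaf on $\mathbb{A}^{1}$ has a complete sum of size $O(q^{1/2})$). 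Keeping track of the conductor of $\mathcal{H}$ in terms of $c(\mathcal{F})$, $d$, $e$, $n$ uses the bound $c(T_{e}(-))\ll_{e}c(-)^{4}$ from the Lemma above together with standard conductor bounds for pullbacks and tensor products.

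\medskip

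The main obstacle I anticipate is the bookkeeping in the first step: carrying out the Fourier-inversion manoeuvre so that the $\mathbf{x}$-sum and the $z$-sum decouple cleanly into (pushforward $\circ$ Fourier transform applied to $\mathcal{F}$) on one side and a genuine polynomial-of-Deligne-type exponential sum in one variable on the other, while correctly tracking the powers of $q^{1/2}$ and verifying that the resulting one-variable data $(\mathcal{H},h)$ genuinely satisfies the Deligne-type and smoothness hypotheses of Theorem~\ref{thm : 1}. In particular one must ensure that introducing the variable $u$ does not create a spurious singular fibre that violates condition $(i)$, which is precisely why the paper insists on $F,G$ homogeneous and on the codimension-$2$ smoothness of $V(F)\cap V(G)$; checking that these geometric conditions survive homogenization and the change of variables is the technical heart of the argument.
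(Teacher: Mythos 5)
Your overall instinct---reduce to a one-variable object and exploit the hypothesis on $T_{e}([x\mapsto x^{d}]^{*}\mathcal{F})$---points in the right direction, but the mechanism you propose for producing that sheaf does not work, and this is a genuine gap rather than bookkeeping. Fourier inversion in the value variable, i.e.\ writing $t_{\mathcal{F}}(F(\mathbf{x}))=q^{-1}\sum_{u}\sum_{z}t_{\mathcal{F}}(z)\psi(u(F(\mathbf{x})-z))$, decouples the sum into $q^{-1}\sum_{u}\bigl(\sum_{z}t_{\mathcal{F}}(z)\psi(-uz)\bigr)\bigl(\sum_{\mathbf{x}}\psi(uF(\mathbf{x})+G(\mathbf{x}))\bigr)$, and the first factor is (up to a factor $-\sqrt{q}$) the trace function of $\FT_{\psi}(\mathcal{F})$ at $-u$; the exponents $d$ and $e$ never enter, so the sheaf $T_{e}([x\mapsto x^{d}]^{*}\mathcal{F})$ cannot arise from this manipulation. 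That sheaf comes only from the dilation symmetry $\mathbf{x}\mapsto\eta\mathbf{x}$, which uses the homogeneity of $F$ and $G$ to give $N(a,b,F,G)=N(\eta^{d}a,\eta^{e}b,F,G)$ and hence, after averaging over $\eta$, the inner sum $\sum_{\eta}t_{\mathcal{F}}(\alpha\eta^{d})\psi(b\eta^{e})=\sqrt{q}\,t_{\mathcal{G}_{\alpha,e}}(b)$ with $\mathcal{G}_{\alpha,e}=[\times(-1)]^{*}T_{e}([\times\alpha]^{*}[x\mapsto x^{d}]^{*}\mathcal{F})$. This is the route the paper takes: one then applies Theorem \ref{thm : 1} to the sums $\sum_{\mathbf{x}:F(\mathbf{x})=\alpha}t_{\mathcal{G}_{\alpha,e}}(G(\mathbf{x}))$, with $g=F-\alpha$ cutting out the hypersurface and $f=G$ the Deligne-type polynomial, after verifying that $V(F-\alpha X_{0}^{d})$, $V(F-\alpha X_{0}^{d})\cap V(G)$ and $V(F-\alpha X_{0}^{d})\cap V(G)\cap V(X_{0})$ are smooth of the right codimensions.

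Your second step compounds the problem: Theorem \ref{thm : 1} is a statement about sums over an affine hypersurface $V(g)\subset\overline{\mathbb{A}}_{\mathbb{F}_{q}}^{n+1}$ and cannot be ``applied directly'' to a one-variable sum $\sum_{u}t_{\mathcal{H}}(h(u))$. What your (corrected) decomposition actually requires is a quasi-orthogonality statement for the correlation $\sum_{u}t_{\FT_{\psi}\mathcal{F}}(-u)K(u)$, where $K(u):=q^{-n/2}\sum_{\mathbf{x}}\psi(uF(\mathbf{x})+G(\mathbf{x}))$; to save the missing factor $q^{1/2}$ one must show that $K$ is (essentially) the trace function of a sheaf of bounded conductor on the $u$-line, none of whose geometric Jordan--H\"older components is isomorphic to the dual of $\FT_{\psi}(\mathcal{F})$. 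Neither claim is supplied by the paper's machinery (note also that when $d\neq e$ the top-degree form of $uF+G$ does not involve $u$, so the structure of this family is delicate), and the hypothesis on $T_{e}([x\mapsto x^{d}]^{*}\mathcal{F})$ does not address it, since that sheaf simply does not occur in your decomposition. A correlation-sum argument of this flavour might conceivably be made to work, but as written the proposal skips the two steps that carry all the difficulty.
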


\subsection*{Remarks and related works}
\begin{itemize}
\item[$(i)$] It is possible to prove a weaker versions of Theorem $\ref{thm : 1}$ and Theorem $\ref{cor : mix}$ using Proposition $\ref{prop : coundel}$. Indeed, one starts writing 
\[
\sum_{\mathbf{x}\in\mathbb{F}_{q}^{n}}t_{\mathcal{F}}(f(\mathbf{x}))=\sum_{a\in\mathbb{F}_{q}}t_{\mathcal{F}}(a)\sum_{\mathbf{x}\in\mathbb{F}_{q}^{n}:f(\mathbf{x})=a}1.
\]
On the other hand, Proposition $\ref{prop : coundel}$ implies that $\sum_{\mathbf{x}\in\mathbb{F}_{q}^{n}:f(\mathbf{x})=a}1=q^{n-1}+E(f,a)$ where $E(f,a)=O_{d,n}(q^{\frac{n-1}{2}})$. Thus,
\begin{equation}
\begin{split}
\sum_{\mathbf{x}\in\mathbb{F}_{q}^{n}}t_{\mathcal{F}}(f(\mathbf{x}))&=q^{n-1}\sum_{a\in\mathbb{F}_{q}}t_{\mathcal{F}}(a)+\sum_{a\in\mathbb{F}_{q}}t_{\mathcal{F}}(a)E(f,a)\\&=q^{n-1}\sum_{a\in\mathbb{F}_{q}}t_{\mathcal{F}}(a)+O_{d,n,c(\mathcal{F})}(q^{\frac{n+1}{2}}),
\end{split}
\label{eq : trivial1}
\end{equation}
Roughly speaking, in Theorem $\ref{thm : 1}$ we are going to bound more carefully the sum $\sum_{a\in\mathbb{F}_{q}}t_{\mathcal{F}}(a)E(f,a)$ gaining a factor $q^{\frac{1}{2}}$. Similarly, one can prove trivially that
\begin{equation}
\Big|\sum_{\mathbf{x}\in\mathbb{F}_{q}^{n}}t_{\mathcal{F}}(F(\mathbf{x}))\psi(G(\mathbf{x}))\Big|\ll_{d,e,n,c(\mathcal{F})}q^{\frac{n}{2}+1}.
\label{eq : trivial2}
\end{equation}
Hence, in this case we improve the error term by a factor $q$.
\item[$(ii)$] Theorem $\ref{thm : 1}$ was already established in the case of $t_{\mathcal{F}}=\psi$ a non-trivial additive character (\cite{Del74}) and in the case of $t_{\mathcal{F}}=\chi$ a non-trivial multiplicative character (\cite{Kat99}).
\item[$(iii)$] A more general version of Theorem $\ref{cor : mix}$ was known already in the case $t_{\mathcal{F}}=\chi$ a non-trivial multiplicative character (\cite{Kat02a}). 
\end{itemize}

\section*{Proofs of Theorem $\ref{thm : 1}$ and Theorem \ref{cor : mix}}
\label{sec : poldel}
\subsubsection*{The Incidence variety}
Let $g(X_{1},...,X_{n+1})$ be as in Theorem $\ref{thm : 1}$ and $G(X_{0},...,X_{n+1})$ its homogenization. Following the notation of Katz in \cite{Kat02a}, we denote $X=V(G)\subset\overline{\mathbb{P}}_{\mathbb{F}_{q}}^{n+1}$. Similarly, let $F(X_{0},...,X_{n+1})$ be the homogenization of $f(X_{1},...,X_{n+1})$ and denote by $H=V(F)\subset\overline{\mathbb{P}}_{\mathbb{F}_{q}}^{n+1}$. Moreover, we denote $L=V(X_{0})$, $Z=X\cap H\cap L$ and $V=X\setminus (X\cap L)=V(g)\subset \overline{\mathbb{A}}_{\mathbb{F}_{q}}^{n+1}$. Let us consider
\[
f=H/L^d:V\longrightarrow\overline{\mathbb{A}}_{\mathbb{F}_{q}}^{1},
\]
and the constructible $\mathbb{Q}_{\ell}$-sheaf $f^{*}\mathcal{F}$ on $V$. By the Lefschetz trace formula, we get
\begin{equation}
\sum_{\mathbf{x}\in V(\mathbb{F}_{q})}t_{\mathcal{F}}(f(\mathbf{x}))=\sum_{\mathbf{x}\in V(\mathbb{F}_{q})}t_{f^{*}\mathcal{F}}(\mathbf{x})=\sum_{i}(-1)^{i}\Tr (\Fr_{\mathbb{F}_{q}}|H_{c}^{i}(\overline{V},f^{*}\mathcal{F})).
\label{eq : lef}
\end{equation}
To compute the right hand side of the equation $(\ref{eq : lef})$ we are going to use the same strategy of \cite[Theorem $8.4$]{Del74}, \cite{Kat99} and \cite{Kat02a}. One starts introducing the incidence variety
\[
\tilde{X}=\{(\mathbf{x},\lambda)\in X\times\overline{\mathbb{A}}_{\mathbb{F}_{q}}^{1}:F(\mathbf{x})-\lambda x_{0}^d=0\}.
\]
Then one has that
\begin{lem}
The incident variety, $\tilde{X}$,  is a smooth variety for any polynomial of Deligne type $f$. Moreover
\begin{enumerate}
\item  The second projection of $X\times\overline{\mathbb{A}}_{\mathbb{F}_{q}}^{1}$ induces a proper flat morphism
\[
\tilde{f}:\tilde{X}\longrightarrow\overline{\mathbb{A}}_{\mathbb{F}_{q}}^{1}
\]
with $\tilde{X}_{\lambda}:=\tilde{f}^{-1}(\lambda)=V(G)\cap V(F-\lambda X_{0}^{d})$ for any $\lambda\in\overline{\mathbb{A}}_{\mathbb{F}_{q}}^{1}$.
\item The affine variety $V$ can be viewed as an open subset of $\tilde{X}$ and one has that $\tilde{f}_{|V}=f$. Moreover, the closed complement of $V$ in $\tilde{X}$ is given by:
\[
\tilde{Z}:=(V(G) \cap V(F)\cap V(X_{0}))\times\overline{\mathbb{A}}_{\mathbb{F}_{q}}^{1}=Z\times \overline{\mathbb{A}}_{\mathbb{F}_{q}}^{1}.
\] 
\end{enumerate}
\label{lem : encaps}
\end{lem}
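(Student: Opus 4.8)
The plan is to exhibit $\tilde X$ as a closed subscheme of $\mathbb P^{n+1}_{\mathbb F_q}\times\overline{\mathbb A}^1_{\mathbb F_q}$ cut out there by the two equations $G(\mathbf x)=0$ and $F(\mathbf x)-\lambda X_0^{d}=0$ (the latter homogeneous of degree $d$ in $\mathbf x$), so that the expected dimension is $(n+1)+1-2=n$. Two of the assertions then come essentially for free. The fibre of the second projection over $\lambda$ is $\{(\mathbf x,\lambda):G(\mathbf x)=0,\ F(\mathbf x)=\lambda x_0^{d}\}$, i.e.\ $V(G)\cap V(F-\lambda X_0^{d})$, which is (1); and $\tilde f$ is proper because $\tilde X$ is closed in $\mathbb P^{n+1}\times\overline{\mathbb A}^1$ while the projection $\mathbb P^{n+1}\times\overline{\mathbb A}^1\to\overline{\mathbb A}^1$ is proper, being a base change of $\mathbb P^{n+1}_{\mathbb F_q}\to\operatorname{Spec}\mathbb F_q$. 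For the rest I would stratify $\tilde X$ by the hyperplane $L=V(X_0)$: on $\{x_0\ne 0\}$ the first projection $(\mathbf x,\lambda)\mapsto\mathbf x$ is an isomorphism onto $V=X\setminus(X\cap L)=V(g)$ (the value $\lambda=F(\mathbf x)/x_0^{d}$ being forced), so this locus is an open subscheme of the smooth $X=V(G)$, hence smooth of dimension $n$, and on it $\tilde f$ is visibly the function $f$ of the statement; on $\{x_0=0\}$ the equation $F(\mathbf x)-\lambda X_0^{d}=0$ degenerates to $F(\mathbf x)=0$ (as $F|_{X_0=0}$ is the leading form $f_d$ of $f$), whence $\tilde X\cap\{x_0=0\}=(V(G)\cap V(F)\cap V(X_0))\times\overline{\mathbb A}^1=Z\times\overline{\mathbb A}^1$, which is (2).

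What is left, and what I regard as the real content, is the smoothness of $\tilde X$ along $Z\times\overline{\mathbb A}^1$. Here I would pass to an affine chart $X_j=1$ with $j\ge 1$ and write out the $2\times(n+2)$ Jacobian of $(G,\,F-\lambda X_0^{d})$ in the coordinates $(x_i)_{i\ne j},\lambda$. At a point with $x_0=0$ (taking $d\ge 2$, which is the case relevant to the application) the $\partial_\lambda$-column equals $-x_0^{d}=0$ and $\partial_{x_0}(F-\lambda X_0^{d})=\partial_{x_0}F$, so the rank of the Jacobian equals the rank of the $2\times(n+1)$ matrix with rows the full projective gradients $\nabla G$ and $\nabla F$ after deleting their $j$-th entries. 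Since $\mathbf x\in Z\subseteq V(G)\cap V(F)$, Euler's identity applied to $G$ and to $F$ shows that the $j$-th coordinate of every vector in $\operatorname{span}\{\nabla G,\nabla F\}$ is one and the same fixed linear function of the remaining coordinates, so deleting the $j$-th entry is injective on that span; consequently the Jacobian has rank $2$ exactly when $\nabla G$ and $\nabla F$ are linearly independent at $\mathbf x$ --- which is precisely the Jacobian criterion for $V(G)\cap V(F)$ to be a smooth complete intersection of codimension $2$ at $\mathbf x$, guaranteed by the hypothesis that $V(G)\cap V(F)$ is smooth of codimension $1$ in $V(G)$. The same computation shows that $\mathcal O_{\tilde X,p}$ is regular of dimension $n$ at every such $p$, so $\tilde X$ is smooth of pure dimension $n$.

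It remains to upgrade properness of $\tilde f$ to flatness, for which I would invoke miracle flatness: $\tilde X$ is smooth, hence Cohen--Macaulay, and $\overline{\mathbb A}^1$ is regular of dimension $1$, so it suffices that every fibre $V(G)\cap V(F-\lambda X_0^{d})$ be of pure dimension $n-1=\dim\tilde X-1$. Suppose instead $F-\lambda X_0^{d}$ vanished on some $n$-dimensional component $X'$ of $X$; then on $X'\cap V(X_0)$ one has $F=F-\lambda X_0^{d}=0$, so $X'\cap V(X_0)\subseteq V(G)\cap V(F)\cap V(X_0)=Z$, while $X'\not\subseteq V(X_0)$ (otherwise $X'=V(X_0)$, forcing $X_0\mid G$ and hence $\dim Z=n-1$, against hypothesis), so $X'\cap V(X_0)$ has pure dimension $n-1$, contradicting $\dim Z=n-2$; together with the fact that every component of $V(G)\cap V(F-\lambda X_0^{d})$ has dimension $\ge n-1$ and the intersection is non-empty (it contains $Z$), all fibres have pure dimension $n-1$ and $\tilde f$ is flat. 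The crux of the argument is the smoothness check along the boundary $Z\times\overline{\mathbb A}^1$: this is the only step in which the transversality-at-infinity hypotheses are used, and everything there hinges on the Euler-identity reduction recasting the problem as the linear independence of $\nabla G$ and $\nabla F$.
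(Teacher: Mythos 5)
Your proposal is correct, but the comparison is a bit lopsided: the paper gives no proof of this lemma at all, instead citing Katz, \emph{Sommes exponentielles} \cite{Kat80}, pp.~173--174, for the entire statement. What you have written is a self-contained reconstruction of (what must be essentially) Katz's argument: exhibit $\tilde X$ as the complete intersection of $G=0$ and $F-\lambda X_0^d=0$ in $\mathbb P^{n+1}\times\overline{\mathbb A}^1$, identify the open stratum $\{x_0\neq0\}$ with $V$ via the graph of $f$ and the closed stratum $\{x_0=0\}$ with $Z\times\overline{\mathbb A}^1$, run the Jacobian criterion along the boundary, and finish flatness by miracle flatness once the fibres are shown to be of pure dimension $n-1$. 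Your key computational observation --- that for $d\ge2$ and $x_0=0$ the $\lambda$-column drops out and the $X_0$-partial of $F-\lambda X_0^d$ reduces to $\partial_{X_0}F$, so that via Euler's identity (using $p\nmid de$) the Jacobian rank on $Z$ collapses to the rank of $(\nabla G,\nabla F)$, which is $2$ exactly by the transversality hypothesis on $V(G)\cap V(F)$ --- is correct and is the genuine content of the smoothness claim. The flatness argument (no $n$-dimensional component of $V(G)$ can lie in $V(F-\lambda X_0^d)$, since its hyperplane section would be an $(n-1)$-dimensional subset of the $(n-2)$-dimensional $Z$) is also sound. The only caveat worth flagging is that you explicitly assume $d\ge2$: for $d=1$ the column $\partial_{X_0}(F-\lambda X_0)=\partial_{X_0}F-\lambda$ does not reduce to $\partial_{X_0}F$ and the argument needs a small modification; since the lemma is stated ``for any polynomial of Deligne type'' but the paper's applications all have $d>1$, this restriction is harmless here, though it means your proof does not quite cover the lemma at the generality advertised.
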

\begin{proof}
This can be found in \cite{Kat80}[Pages $173$-$174$]. 
\end{proof}
Arguing as \cite[Lemma $11$]{Kat99}, one rewrites the sum of the trace of the sheaf $\tilde{f}^{*}\mathcal{F}$ on $\tilde{X}$ as
\[
\sum_{x\in\tilde{X}(\mathbb{F}_{q})}t_{\tilde{f}^{*}\mathcal{F}}(x)=\sum_{i,j}(-1)^{i+j}\Tr(\Fr |H_{c}^{j}(\overline{\mathbb{A}}_{\mathbb{F}_{q}}^{1},\mathcal{F}\otimes R^{i}\tilde{f}_{*}\overline{\mathbb{Q}}_{\ell})).
\]
Hence in order to compute the right hand side of the above equality we need to understand the size of the cohomology groups $H_{c}^{j}(\overline{\mathbb{A}}_{\mathbb{F}_{q}}^{1},\mathcal{F}\otimes R^{i}\tilde{f}_{*}\overline{\mathbb{Q}}_{\ell})$ and the action of the Frobenius automorphism on these groups.

\subsubsection*{Properties of $R^{i}\tilde{f}_{*}\overline{\mathbb{Q}}_{\ell}$}
We can state the following proposition concerning the property of the higher direct image sheaves $R^{i}\tilde{f}_{*}\overline{\mathbb{Q}}_{\ell}$ on $\overline{\mathbb{A}}_{\mathbb{F}_{q}}^{1}$ (recall that $\tilde{f}$ is proper).
\begin{prop}
Consider the morphism
\[
\tilde{f}:\tilde{X}\longrightarrow\overline{\mathbb{A}}_{\mathbb{F}_{q}}^{1}
\]
then the following properties hold:
\begin{itemize}
\item[$(i)$] $\tilde{f}$ is smooth of relative dimension $n-1$ in a Zariski open neighborhood of the origin $0$ in $\overline{\mathbb{A}}_{\mathbb{F}_{q}}^{1}$. Morover, the sheaves $R^{i}\tilde{f}_{*}\overline{\mathbb{Q}}_{\ell}$ are lisse in a Zariski open neighborhood of the origin.
\item[$(ii)$] If $\lambda\in\overline{\mathbb{A}}_{\mathbb{F}_{q}}^{1}$ is such that $\tilde{X}_{\lambda}$ is singular, then $\tilde{X}_{\lambda}$ has only isolated singularities. 
\item[$(iii)$] for any $i\geq n+1$ the sheaf $R^{i}\tilde{f}_{*}\overline{\mathbb{Q}}_{\ell}$ is lisse on $\overline{\mathbb{A}}_{\mathbb{F}_{q}}^{1}$ and pure of weight $i$,
\item[$(iv)$] the sheaves $R^{i}\tilde{f}_{*}\overline{\mathbb{Q}}_{\ell}$ are tamely ramified at $\infty$ for all $i$.
\item[$(v)$] for $i=n$, denote $j:U\longrightarrow\overline{\mathbb{A}}_{\mathbb{F}_{q}}^{1}$ the inclusion of an open dense subset $U$  of $\overline{\mathbb{A}}_{\mathbb{F}_{q}}^{1}$ where $R^{n}\tilde{f}_{*}\overline{\mathbb{Q}}_{\ell}$ is lisse, then one has the exact sequence:
\begin{equation}
0\longrightarrow\mathcal{P}\longrightarrow R^{n}\tilde{f}_{*}\overline{\mathbb{Q}}_{\ell}\longrightarrow j_{*}j^{*} R^{n}\tilde{f}_{*}\overline{\mathbb{Q}}_{\ell}\longrightarrow 0,
\label{eq : exact}
\end{equation}
where $j_{*}j^{*}R^{n}\tilde{f}_{*}\overline{\mathbb{Q}}_{\ell}$ is geometrically constant and $\mathcal{P}$ punctual.
\end{itemize}
\label{prop : 1}
\end{prop}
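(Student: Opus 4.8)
The plan is to view $\tilde f:\tilde X\to\overline{\mathbb A}^1_{\mathbb F_q}$ as a Lefschetz pencil on the smooth complete intersection $X=V(G)$, cut out by the pencil spanned by $F$ and $X_0^d$, and to run the analysis of such pencils exactly as in \cite{Del74}, \cite{Kat99}, \cite{Kat02a}; the one genuinely new feature is that the total space of the fibration is the Deligne-type hypersurface $V(g)$ rather than the affine cone over a smooth projective hypersurface. For $(i)$: by Lemma \ref{lem : encaps} the variety $\tilde X$ is smooth and $\tilde f$ is proper and flat, with special fibre $\tilde f^{-1}(0)=V(G)\cap V(F)$, which is smooth of pure dimension $n-1$ by hypothesis. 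The non-smoothness locus of $\tilde f$ is closed in $\tilde X$, so, $\tilde f$ being proper, its image is closed in $\overline{\mathbb A}^1_{\mathbb F_q}$ and avoids $0$; hence $\tilde f$ is smooth of relative dimension $n-1$ over a Zariski-open neighbourhood $W$ of $0$, and the proper smooth base change theorem makes each $R^i\tilde f_*\overline{\mathbb Q}_\ell$ lisse on $W$.

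For $(ii)$, write $\tilde X=V\sqcup\tilde Z$ with $V=V(g)$ open and $\tilde Z=Z\times\overline{\mathbb A}^1_{\mathbb F_q}$ closed, as in Lemma \ref{lem : encaps}. A local Jacobian computation at the points of $\tilde Z$, carried out as on pp.~$173$--$174$ of \cite{Kat80} and using that $Z=V(G)\cap V(F)\cap V(X_0)$ is smooth of codimension $2$ in $V(G)$, shows that $\tilde X_\lambda$ is smooth at every point of $\tilde Z$ for every $\lambda$. Consequently the singular locus of $\tilde X_\lambda$ is contained in $\tilde X_\lambda\cap V=f^{-1}(\lambda)$ and coincides with the fibre over $\lambda$ of the critical locus $C$ of $f:V(g)\to\overline{\mathbb A}^1_{\mathbb F_q}$, so it suffices to show that $C$ is finite. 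A point of $C$ satisfies a Lagrange-type condition relating $\nabla f$ and $\nabla g$; passing to the leading homogeneous parts $f_d,g_e$ and to the hyperplane at infinity $V(X_0)$, the Deligne-type condition on $f$ together with the smoothness of $V(G)$, $V(G)\cap V(F)$ and $V(G)\cap V(F)\cap V(X_0)$ forces the projective closure of $C$ to be disjoint from $V(X_0)$, so $C$ is proper over $\overline{\mathbb A}^1_{\mathbb F_q}$ and $0$-dimensional. I expect this finiteness statement — the bookkeeping of which smoothness hypothesis excludes which stratum of potential critical points at infinity — to be the main technical obstacle; the remaining assertions are formal cohomological consequences.

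Granting $(i)$ and $(ii)$: the fibres $\tilde X_\lambda$ are complete intersections of dimension $n-1$ in $\overline{\mathbb P}^{n+1}_{\mathbb F_q}$ with at worst finitely many singular points, and $\tilde X$ is smooth, so the vanishing-cycle complex of $\tilde f$ at any point is a skyscraper sheaf concentrated in degree $n-1$. Feeding this into the long exact sequence comparing $i_\lambda^*R\tilde f_*\overline{\mathbb Q}_\ell$, the nearby-cycle cohomology and the vanishing cycles, the cospecialization maps $H^i(\tilde X_\lambda,\overline{\mathbb Q}_\ell)\to H^i(\tilde X_{\bar\eta},\overline{\mathbb Q}_\ell)$ are isomorphisms for $i\notin\{n-1,n\}$; in particular, for $i\geq n+1$ the sheaf $R^i\tilde f_*\overline{\mathbb Q}_\ell$ has locally constant rank, hence is lisse on all of $\overline{\mathbb A}^1_{\mathbb F_q}$, and purity of weight $i$ follows from Deligne's Weil~II (the decomposition theorem) applied to the proper morphism $\tilde f$ out of the smooth $\tilde X$, which gives $(iii)$. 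For $(iv)$ one compactifies $\tilde f$ over $\overline{\mathbb P}^1_{\mathbb F_q}$: the fibre at $\infty$ is carried by $V(G)\cap V(X_0)$ with multiplicity $d$, and since $p\nmid d$ the local analysis of this degeneration (as in \cite{Kat80}) gives $\Swan_\infty=0$ for every $R^i\tilde f_*\overline{\mathbb Q}_\ell$, i.e.\ tameness at $\infty$. For $(v)$, the same exact sequence in degree $n$ gives a surjection $H^n(\tilde X_\lambda,\overline{\mathbb Q}_\ell)\twoheadrightarrow H^n(\tilde X_{\bar\eta},\overline{\mathbb Q}_\ell)$, so $R^n\tilde f_*\overline{\mathbb Q}_\ell\to j_*j^*R^n\tilde f_*\overline{\mathbb Q}_\ell$ is surjective with punctual kernel $\mathcal P$; and since $n>n-1=\dim\tilde X_\lambda$, the Lefschetz hyperplane theorem together with hard Lefschetz identifies $H^n(\tilde X_{\bar\eta},\overline{\mathbb Q}_\ell)$ with a Tate twist of $H^{n-2}(\overline{\mathbb P}^{n+1}_{\mathbb F_q},\overline{\mathbb Q}_\ell)$, which has trivial geometric monodromy, so $j_*j^*R^n\tilde f_*\overline{\mathbb Q}_\ell$ is geometrically constant.
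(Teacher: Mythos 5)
Your proof of part $(i)$ coincides with the paper's. For parts $(ii)$--$(v)$, however, the paper does not reprove anything: it observes that Katz has already established these statements for exactly this incidence variety $\tilde X$ and cites \cite{Kat02a}[Propositions~$7.1$, $7.2$, $7.3$] verbatim. Your proposal instead sketches a self-contained reproof via vanishing cycles and Lefschetz-pencil theory, which is indeed the method underlying Katz's results, so the two are in that sense compatible; but as a proof your version has a genuine gap that you yourself flag. The assertion in $(ii)$ that the critical locus $C$ of $f\colon V(g)\to\overline{\mathbb{A}}^{1}_{\mathbb{F}_q}$ is finite --- obtained by showing its projective closure misses $V(X_0)$ --- is only asserted ``I expect this to be the main technical obstacle''), not derived from the stated smoothness hypotheses, and this finiteness (equivalently: each $\tilde X_\lambda$ has isolated singularities, with only finitely many singular $\lambda$) is the entire geometric content of \cite{Kat02a}[Proposition~$7.1$]. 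Without it, the vanishing-cycle concentration in a single degree fails and your arguments for $(ii)$, $(iii)$ and $(v)$ do not close. A smaller imprecision in $(iii)$: purity of $R^{i}\tilde f_{*}\overline{\mathbb{Q}}_{\ell}$ for $i\geq n+1$ should not be attributed to the decomposition theorem; once lisseness on all of $\overline{\mathbb{A}}^{1}_{\mathbb{F}_q}$ is established, purity follows because the sheaf agrees on the dense open smooth locus of $\tilde f$ with the lisse sheaf of weight $i$ supplied by Deligne's purity theorem for proper smooth morphisms, and a lisse sheaf that is pure on a dense open of a normal connected curve is pure.
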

\begin{proof}
The fiber $\tilde{X}_{0}=V(G)\cap V(F)$ is smooth by hypothesis, then $\tilde{f}$ is smooth of relative dimension $n-1$ in a Zariski open neighborhood $0$. Moreover the sheaves $R^{i}\tilde{f}_{*}\overline{\mathbb{Q}}_{\ell}$ are lisse in a Zariski open neighborhood of the origin $0$ in $\overline{\mathbb{A}}_{\mathbb{F}_{q}}^{1}$ thanks to \cite{SGA4}[Expos\'e XV, Theorem $2.1$, page $192$ and Expos\'e XVI, Theorem $2.1$, page $213$] and this proves $(i)$. Then: $(ii)$ is \cite{Kat02a}[Proposition $7.1$], $(iii)$, $(v)$ are \cite{Kat02a}[Proposition $7.2$], $(iv)$ is \cite{Kat02a}[Proposition $7.3$].
\end{proof}
\begin{cor}
For any $i\geq n+1$, the sheaf $R^{i}\tilde{f}_{*}\overline{\mathbb{Q}}_{\ell}$ is geometrically constant on $\overline{\mathbb{A}}_{\mathbb{F}_{q}}^{1}$.
\label{cor : geomconst}
\end{cor}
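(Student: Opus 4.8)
The plan is to obtain this as an immediate consequence of parts $(iii)$ and $(iv)$ of Proposition $\ref{prop : 1}$: for $i\geq n+1$ the sheaf $\mathcal{G}:=R^{i}\tilde{f}_{*}\overline{\mathbb{Q}}_{\ell}$ is lisse on the whole of $\overline{\mathbb{A}}_{\mathbb{F}_{q}}^{1}$ and is tamely ramified at $\infty$. So it suffices to establish the following general fact, which is exactly the principle already exploited in Remark $\ref{oss : listam}$ but with no irreducibility hypothesis: \emph{a lisse $\ell$-adic sheaf on $\overline{\mathbb{A}}_{\mathbb{F}_{q}}^{1}$ that is tame at $\infty$ is geometrically constant.}

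First I would run the Grothendieck--Ogg--Shafarevich (Euler--Poincar\'e) formula over the geometric base. Writing $r:=\rank(\mathcal{G})$ and using that $\chi_{c}(\overline{\mathbb{A}}_{\mathbb{F}_{q}}^{1})=1$ while $\Swan_{\infty}(\mathcal{G})=0$, one gets
\[
\dim H_{c}^{0}(\overline{\mathbb{A}}_{\mathbb{F}_{q}}^{1},\mathcal{G})-\dim H_{c}^{1}(\overline{\mathbb{A}}_{\mathbb{F}_{q}}^{1},\mathcal{G})+\dim H_{c}^{2}(\overline{\mathbb{A}}_{\mathbb{F}_{q}}^{1},\mathcal{G})=r.
\]
Next I would insert the two standard facts for a lisse sheaf on a connected affine curve: $H_{c}^{0}(\overline{\mathbb{A}}_{\mathbb{F}_{q}}^{1},\mathcal{G})=0$, and, by Poincar\'e duality, $H_{c}^{2}(\overline{\mathbb{A}}_{\mathbb{F}_{q}}^{1},\mathcal{G})$ is a Tate twist of the space of $\pi_{1}(\overline{\mathbb{A}}_{\mathbb{F}_{q}}^{1})$-coinvariants of the generic stalk $\mathcal{G}_{\bar{\eta}}$, whence $\dim H_{c}^{2}\leq r$. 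The displayed identity then forces $\dim H_{c}^{2}=r$ and $\dim H_{c}^{1}=0$.

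Finally, $\dim H_{c}^{2}=r$ says that the canonical surjection $\mathcal{G}_{\bar{\eta}}\twoheadrightarrow(\mathcal{G}_{\bar{\eta}})_{\pi_{1}}$ is an isomorphism, i.e. $(\sigma-1)\mathcal{G}_{\bar{\eta}}=0$ for every $\sigma\in\pi_{1}(\overline{\mathbb{A}}_{\mathbb{F}_{q}}^{1})$; hence the geometric monodromy is trivial and $\mathcal{G}=R^{i}\tilde{f}_{*}\overline{\mathbb{Q}}_{\ell}$ is geometrically constant, as claimed. (Alternatively one may simply invoke that the tame fundamental group of the affine line over $\overline{\mathbb{F}}_{q}$ is trivial, so any lisse sheaf tame at $\infty$ has trivial geometric monodromy.) I do not expect a genuine obstacle here; the only points requiring care are that the input — lisse on all of $\overline{\mathbb{A}}_{\mathbb{F}_{q}}^{1}$ and tame at $\infty$ — is supplied verbatim by Proposition $\ref{prop : 1}$ $(iii)$--$(iv)$ in the range $i\geq n+1$, and that the inequality $\dim H_{c}^{2}\leq r$ is applied with the geometric (not arithmetic) fundamental group.
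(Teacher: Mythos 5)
Your proof is correct and takes essentially the same route as the paper, namely combining Proposition \ref{prop : 1}~$(iii)$--$(iv)$ with the general principle that a lisse $\ell$-adic sheaf on $\overline{\mathbb{A}}_{\mathbb{F}_{q}}^{1}$ which is tame at $\infty$ is geometrically constant. The one genuine refinement in your write-up is that you do not rely on any irreducibility hypothesis: the paper's own proof simply cites Remark~\ref{oss : listam}, but that Remark is stated for an \emph{irreducible} middle-extension sheaf, whereas the sheaves $R^{i}\tilde{f}_{*}\overline{\mathbb{Q}}_{\ell}$ are not a priori irreducible (and one cannot casually reduce to Jordan--H\"older factors, since an extension of geometrically trivial sheaves need not itself be geometrically trivial). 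Your Euler--Poincar\'e computation --- $H_{c}^{0}=0$ because the sheaf is lisse on a positive-dimensional connected scheme, $\dim H_{c}^{2}\le \rank$ via coinvariants of the geometric $\pi_{1}$, and then GOS forcing $\dim H_{c}^{2}=\rank$ and $H_{c}^{1}=0$, whence trivial geometric monodromy --- works with no irreducibility assumption and so genuinely closes this small gap. The parenthetical alternative you offer, that $\pi_{1}^{\mathrm{tame}}(\mathbb{A}^{1}_{\overline{\mathbb{F}}_{q}})$ is trivial, is the cleanest formulation and gives the conclusion immediately.
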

\begin{proof}
For any $i\geq n+1$, the sheaf $R^{i}\tilde{f}_{*}\overline{\mathbb{Q}}_{\ell}$ is lisse on $\overline{\mathbb{A}}_{\mathbb{F}_{q}}^{1}$ and tame at $\infty$, thus it is geometrically constant on $\overline{\mathbb{A}}_{\mathbb{F}_{q}}^{1}$ thanks to Remark $\ref{oss : listam}$.
\end{proof}
We conclude this section by calculating the cohomology of the sheaf $R^{n}\tilde{f}_{*}\overline{\mathbb{Q}}_{\ell}$:
\begin{lem}
One has that:
\[
H_{c}^{i}(\overline{\mathbb{A}}_{\mathbb{F}_{q}}^{1},R^{n}\tilde{f}_{*}\overline{\mathbb{Q}}_{\ell})=
\begin{cases}
H_{c}^{0}(\overline{\mathbb{A}}_{\mathbb{F}_{q}}^{1},\mathcal{P}),       & \text{if $i = 0$,} \\
0, & \text{if $ i=1$,} \\
H_{c}^{2}(\overline{\mathbb{A}}_{\mathbb{F}_{q}}^{1},j_{*}j^{*} R^{n}\tilde{f}_{*}\overline{\mathbb{Q}}_{\ell})  & \text{if $i=2$.} \\
\end{cases}
\]
\label{lem : 4}
\end{lem}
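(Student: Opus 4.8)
The plan is to feed the short exact sequence $(\ref{eq : exact})$ of Proposition $\ref{prop : 1}\,(v)$ into the long exact sequence for compactly supported cohomology and to read off each group from the vanishing of the neighbouring terms. Write $\mathcal{R}:=R^{n}\tilde{f}_{*}\overline{\mathbb{Q}}_{\ell}$ and $\mathcal{G}:=j_{*}j^{*}\mathcal{R}$, so that $(\ref{eq : exact})$ reads $0\to\mathcal{P}\to\mathcal{R}\to\mathcal{G}\to 0$ with $\mathcal{P}$ punctual and $\mathcal{G}$ geometrically constant. The associated long exact sequence is
\[
\cdots\to H_{c}^{i}(\overline{\mathbb{A}}_{\mathbb{F}_{q}}^{1},\mathcal{P})\to H_{c}^{i}(\overline{\mathbb{A}}_{\mathbb{F}_{q}}^{1},\mathcal{R})\to H_{c}^{i}(\overline{\mathbb{A}}_{\mathbb{F}_{q}}^{1},\mathcal{G})\to H_{c}^{i+1}(\overline{\mathbb{A}}_{\mathbb{F}_{q}}^{1},\mathcal{P})\to\cdots,
\]
so everything reduces to the cohomology of a punctual sheaf and of $\mathcal{G}$.

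The first input is that, $\mathcal{P}$ being supported on a finite set of closed points, $H_{c}^{i}(\overline{\mathbb{A}}_{\mathbb{F}_{q}}^{1},\mathcal{P})=0$ for every $i\geq 1$, while $H_{c}^{0}(\overline{\mathbb{A}}_{\mathbb{F}_{q}}^{1},\mathcal{P})$ is simply the direct sum of the stalks of $\mathcal{P}$. The second input is that $H_{c}^{0}(\overline{\mathbb{A}}_{\mathbb{F}_{q}}^{1},\mathcal{G})=0$: a global section of $j_{*}j^{*}\mathcal{R}$ with proper — hence finite — support restricts to a section of the lisse sheaf $j^{*}\mathcal{R}$ on the connected open set $U$ with finite support, which must vanish, and by construction $j_{*}$ admits no nonzero global section supported on the finite complement $\overline{\mathbb{A}}_{\mathbb{F}_{q}}^{1}\setminus U$. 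The third input is that $H_{c}^{1}(\overline{\mathbb{A}}_{\mathbb{F}_{q}}^{1},\mathcal{G})=0$: since $\mathcal{G}$ is geometrically constant it becomes, after base change to $\overline{\mathbb{F}}_{q}$, a sum of copies of $\overline{\mathbb{Q}}_{\ell}$ — indeed $j_{*}$ applied to a geometrically constant lisse sheaf is again geometrically the constant sheaf, because inertia acts trivially and no local invariants are lost at the missing points — and the compactly supported cohomology of $\mathbb{A}^{1}$ is concentrated in degree $2$, so $H_{c}^{1}(\overline{\mathbb{A}}_{\mathbb{F}_{q}}^{1},\mathcal{G})=0$.

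Granting these three vanishings, the long exact sequence decomposes into the three stated pieces. For $i=0$ the segment $0\to H_{c}^{0}(\overline{\mathbb{A}}_{\mathbb{F}_{q}}^{1},\mathcal{P})\to H_{c}^{0}(\overline{\mathbb{A}}_{\mathbb{F}_{q}}^{1},\mathcal{R})\to H_{c}^{0}(\overline{\mathbb{A}}_{\mathbb{F}_{q}}^{1},\mathcal{G})=0$ yields $H_{c}^{0}(\overline{\mathbb{A}}_{\mathbb{F}_{q}}^{1},\mathcal{R})\cong H_{c}^{0}(\overline{\mathbb{A}}_{\mathbb{F}_{q}}^{1},\mathcal{P})$. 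For $i=1$ the segment $0=H_{c}^{1}(\overline{\mathbb{A}}_{\mathbb{F}_{q}}^{1},\mathcal{P})\to H_{c}^{1}(\overline{\mathbb{A}}_{\mathbb{F}_{q}}^{1},\mathcal{R})\to H_{c}^{1}(\overline{\mathbb{A}}_{\mathbb{F}_{q}}^{1},\mathcal{G})=0$ gives $H_{c}^{1}(\overline{\mathbb{A}}_{\mathbb{F}_{q}}^{1},\mathcal{R})=0$. For $i=2$ the segment $0=H_{c}^{2}(\overline{\mathbb{A}}_{\mathbb{F}_{q}}^{1},\mathcal{P})\to H_{c}^{2}(\overline{\mathbb{A}}_{\mathbb{F}_{q}}^{1},\mathcal{R})\to H_{c}^{2}(\overline{\mathbb{A}}_{\mathbb{F}_{q}}^{1},\mathcal{G})\to H_{c}^{3}(\overline{\mathbb{A}}_{\mathbb{F}_{q}}^{1},\mathcal{P})=0$ gives $H_{c}^{2}(\overline{\mathbb{A}}_{\mathbb{F}_{q}}^{1},\mathcal{R})\cong H_{c}^{2}(\overline{\mathbb{A}}_{\mathbb{F}_{q}}^{1},\mathcal{G})$, which is exactly the claimed table.

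The argument is essentially formal, so there is no real obstacle: the only point that needs a word of care is that $j_{*}$ preserves geometric constancy, which is used to kill $H_{c}^{1}$, and this is immediate once one notes that a constant sheaf has trivial inertia action at the missing points. All the geometric substance has already been absorbed into Proposition $\ref{prop : 1}\,(v)$.
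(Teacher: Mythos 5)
Your argument is correct and its backbone is the same as the paper's: both pass the short exact sequence $0\to\mathcal{P}\to R^{n}\tilde f_{*}\overline{\mathbb{Q}}_{\ell}\to j_{*}j^{*}R^{n}\tilde f_{*}\overline{\mathbb{Q}}_{\ell}\to 0$ through compactly supported cohomology and then kill the cross terms $H_{c}^{\geq 1}(\mathcal{P})$ and $H_{c}^{0}(j_{*}j^{*}R^{n}\tilde f_{*}\overline{\mathbb{Q}}_{\ell})$; the paper packages these steps by citing \cite{Kat80}[Section 4.5.2], while you spell out the long exact sequence. The genuine point of divergence is the vanishing $H_{c}^{1}(\overline{\mathbb{A}}_{\mathbb{F}_{q}}^{1},j_{*}j^{*}R^{n}\tilde f_{*}\overline{\mathbb{Q}}_{\ell})=0$. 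You get it at once: the sheaf is geometrically constant by Proposition~\ref{prop : 1}(v), so after base change to $\overline{\mathbb{F}}_{q}$ it is a constant sheaf on $\mathbb{A}^{1}$, whose compactly supported cohomology is concentrated in degree $2$. The paper instead runs the Grothendieck--Ogg--Shafarevich formula to obtain $\dim H_{c}^{2}-\dim H_{c}^{1}=\rank$ and then uses the coinvariants bound $\dim H_{c}^{2}\leq\rank$ to force $H_{c}^{1}=0$. Your version is more immediate because it exploits the full strength of geometric constancy; the Euler-characteristic route would still work under the weaker hypothesis that the middle extension is lisse on $\mathbb{A}^{1}$ and tame at $\infty$. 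One small remark: the parenthetical claim that $j_{*}$ preserves geometric constancy is unnecessary here, since Proposition~\ref{prop : 1}(v) already states outright that $j_{*}j^{*}R^{n}\tilde f_{*}\overline{\mathbb{Q}}_{\ell}$ is geometrically constant; you can simply quote it rather than re-derive it.
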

\begin{proof}
Applying \cite{Kat80}[section $4.5.2$] one has that
\[
H_{c}^{i}(\overline{\mathbb{A}}_{\mathbb{F}_{q}}^{1},R^{n}\tilde{f}_{*}\overline{\mathbb{Q}}_{\ell})=
\begin{cases}
H_{c}^{0}(\overline{\mathbb{A}}_{\mathbb{F}_{q}}^{1},\mathcal{P}),       & \text{if $i = 0$,} \\
H_{c}^{i}(\overline{\mathbb{A}}_{\mathbb{F}_{q}}^{1},j_{*}j^{*} R^{n}\tilde{f}_{*}\overline{\mathbb{Q}}_{\ell})  & \text{if $i=1,2$.} \\
\end{cases}
\]
and that $H_{c}^{1}(\overline{\mathbb{A}}_{\mathbb{F}_{q}}^{1},\mathcal{P})=H_{c}^{2}(\overline{\mathbb{A}}_{\mathbb{F}_{q}}^{1},\mathcal{P})=H_{c}^{0}(\overline{\mathbb{A}}_{\mathbb{F}_{q}}^{1},j_{*}j^{*} R^{n}\tilde{f}_{*}\overline{\mathbb{Q}}_{\ell})=0$. Using the Grothendieck–Ogg–Shafarevich Formula for the sheaf $j_{*}j^{*} R^{n}\tilde{f}_{*}\overline{\mathbb{Q}}_{\ell}$ (which is geometrically constant) one obtains
\[
\dim (H_{c}^{2}(\overline{\mathbb{A}}_{\mathbb{F}_{q}}^{1},j_{*}j^{*} R^{n}\tilde{f}_{*}\overline{\mathbb{Q}}_{\ell}))-\dim (H_{c}^{1}(\overline{\mathbb{A}}_{\mathbb{F}_{q}}^{1},j_{*}j^{*} R^{n}\tilde{f}_{*}\overline{\mathbb{Q}}_{\ell}))= \rank (j_{*}j^{*} R^{n}\tilde{f}_{*}\overline{\mathbb{Q}}_{\ell}).
\]
On the other hand, $\dim (H_{c}^{2}(\overline{\mathbb{A}}_{\mathbb{F}_{q}}^{1},j_{*}j^{*} R^{n}\tilde{f}_{*}\overline{\mathbb{Q}}_{\ell}))\leq \rank (j_{*}j^{*} R^{n}\tilde{f}_{*}\overline{\mathbb{Q}}_{\ell})$ and then the lemma follows.
\end{proof}

\subsubsection*{Bounds for the conductor of the cohomology groups}
Before to go on, it is useful to recall the hypothesis on the sheaf $\mathcal{F}$: any irreducible component of $\mathcal{F}$ is not geometrically trivial, i.e. $H_{c}^{2}(\overline{\mathbb{A}}_{\mathbb{F}_{q}}^{1},\mathcal{F})=0$ and satisfies one of the following conditions:
\begin{itemize}
\item[$(i)$] there exists $s_{i}\in \sing (\mathcal{F}_{i})$ such that $f^{-1}(s_{i})$ is smooth;
\item[$(ii)$] the sheaf $\mathcal{F}_{i}$ is wildly ramified at $\infty$.
\end{itemize}
Then we prove the following
\begin{lem} If $j=2$ or $i\geq 2n-1$ then $H_{c}^{j}(\overline{\mathbb{A}}_{\mathbb{F}_{q}}^{1},\mathcal{F}\otimes R^{i}\tilde{f}_{*}\overline{\mathbb{Q}}_{\ell})=0$.
\label{lem : cohom2}
\end{lem}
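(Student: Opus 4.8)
The plan is to prove the vanishing of $H_{c}^{j}(\overline{\mathbb{A}}_{\mathbb{F}_{q}}^{1},\mathcal{F}\otimes R^{i}\tilde{f}_{*}\overline{\mathbb{Q}}_{\ell})$ by treating the two stated cases $j=2$ and $i\geq 2n-1$ separately, and within each case reducing to the structure of $R^{i}\tilde{f}_{*}\overline{\mathbb{Q}}_{\ell}$ established in Proposition $\ref{prop : 1}$ and Corollary $\ref{cor : geomconst}$.

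First I would handle $i\geq 2n-1$. Since $\tilde{f}$ is proper with fibers of dimension $n-1$, the sheaves $R^{i}\tilde{f}_{*}\overline{\mathbb{Q}}_{\ell}$ vanish for $i>2(n-1)=2n-2$, so the only case to consider is $i=2n-1$ being already zero, and more generally $i\geq n+1$; by Corollary $\ref{cor : geomconst}$ such $R^{i}\tilde{f}_{*}\overline{\mathbb{Q}}_{\ell}$ is geometrically constant. Hence $\mathcal{F}\otimes R^{i}\tilde{f}_{*}\overline{\mathbb{Q}}_{\ell}$ is, geometrically, a direct sum of copies of $\mathcal{F}$, and $H_{c}^{2}(\overline{\mathbb{A}}_{\mathbb{F}_{q}}^{1},\mathcal{F}\otimes R^{i}\tilde{f}_{*}\overline{\mathbb{Q}}_{\ell})$ is a direct sum of copies of $H_{c}^{2}(\overline{\mathbb{A}}_{\mathbb{F}_{q}}^{1},\mathcal{F})$, which vanishes by the hypothesis on $\mathcal{F}$. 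Actually one needs this for all of $H_{c}^{j}$ with $j=2$ at the same time, so this argument simultaneously dispatches the overlap; for $i\geq 2n-1$ and $j=0,1$ the claim is that $R^{i}\tilde{f}_{*}\overline{\mathbb{Q}}_{\ell}=0$ for $i\geq 2n-1$, so nothing remains.

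The substantive case is $j=2$ with $0\leq i\leq 2n-2$. Here I would split according to whether $i\geq n+1$, $i=n$, or $i\leq n-1$. For $i\geq n+1$ the geometrically-constant argument above gives $H_{c}^{2}=\bigoplus H_{c}^{2}(\overline{\mathbb{A}}_{\mathbb{F}_{q}}^{1},\mathcal{F})=0$. For $i=n$, I would use the exact sequence $(\ref{eq : exact})$: tensoring with $\mathcal{F}$ and taking the long exact cohomology sequence, $H_{c}^{2}(\overline{\mathbb{A}}_{\mathbb{F}_{q}}^{1},\mathcal{F}\otimes R^{n}\tilde{f}_{*}\overline{\mathbb{Q}}_{\ell})$ is squeezed between $H_{c}^{2}(\overline{\mathbb{A}}_{\mathbb{F}_{q}}^{1},\mathcal{F}\otimes\mathcal{P})$ (which is $0$ since $\mathcal{P}$ is punctual, so $\mathcal{F}\otimes\mathcal{P}$ is punctual and has no $H_{c}^{2}$) and $H_{c}^{2}(\overline{\mathbb{A}}_{\mathbb{F}_{q}}^{1},\mathcal{F}\otimes j_{*}j^{*}R^{n}\tilde{f}_{*}\overline{\mathbb{Q}}_{\ell})$ (which vanishes because $j_{*}j^{*}R^{n}\tilde{f}_{*}\overline{\mathbb{Q}}_{\ell}$ is geometrically constant and $H_{c}^{2}(\overline{\mathbb{A}}_{\mathbb{F}_{q}}^{1},\mathcal{F})=0$). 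This yields $H_{c}^{2}=0$ for $i=n$.

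The delicate range is $i\leq n-1$, and this is where conditions $(i)$ and $(ii)$ on the components $\mathcal{F}_{i}$ of $\mathcal{F}$ enter; I expect this to be the main obstacle. Working geometrically and componentwise, I must show $H_{c}^{2}(\overline{\mathbb{A}}_{\mathbb{F}_{q}}^{1},\mathcal{F}_{i}\otimes R^{k}\tilde{f}_{*}\overline{\mathbb{Q}}_{\ell})=0$ for each $k\leq n-1$. By duality $H_{c}^{2}$ is dual to $H^{0}$ of the dual sheaf, i.e. to the coinvariants/invariants of the geometric monodromy on $\mathcal{F}_{i}^{\vee}\otimes(R^{k}\tilde{f}_{*}\overline{\mathbb{Q}}_{\ell})^{\vee}$, so vanishing amounts to showing this tensor product has no geometrically trivial sub or quotient — equivalently that $\mathcal{F}_{i}$ and $R^{k}\tilde{f}_{*}\overline{\mathbb{Q}}_{\ell}$ have no common geometrically irreducible constituent up to the relevant twist. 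Under condition $(i)$, $\mathcal{F}_{i}$ is genuinely singular at some $s_{i}$ whose fiber $\tilde{f}^{-1}(s_{i})$ is smooth, forcing $R^{k}\tilde{f}_{*}\overline{\mathbb{Q}}_{\ell}$ to be lisse at $s_{i}$ while $\mathcal{F}_{i}$ is not, which prevents any shared constituent and also controls the local monodromy. Under condition $(ii)$, $\mathcal{F}_{i}$ is wildly ramified at $\infty$ whereas $R^{k}\tilde{f}_{*}\overline{\mathbb{Q}}_{\ell}$ is tame at $\infty$ by Proposition $\ref{prop : 1}(iv)$, so again no common constituent: the tensor product is wildly ramified at $\infty$ on every constituent and hence cannot contain the trivial sheaf. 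In both cases one then invokes Remark $\ref{oss : listam}$-type reasoning (an irreducible lisse-on-$\mathbb{A}^{1}$, tame-at-$\infty$ sheaf is geometrically trivial, and the contrapositive) together with the Euler–Poincaré formula to conclude the $H_{c}^{2}$ vanishes. The care needed is in reducing cleanly to geometrically irreducible components of $\mathcal{F}$ and to irreducible constituents of $R^{k}\tilde{f}_{*}\overline{\mathbb{Q}}_{\ell}$, and in checking that the middle-extension/punctual-quotient subtleties in low degree do not obstruct the argument; I would lean on \cite{Kat02a}[Proposition $7.1$] for the isolated-singularity structure to pin down the local monodromy representations of $R^{k}\tilde{f}_{*}\overline{\mathbb{Q}}_{\ell}$ at the finitely many bad points.
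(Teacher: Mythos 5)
Your proposal is correct and follows essentially the same route as the paper: for $i\geq 2n-1$ the sheaf $R^{i}\tilde{f}_{*}\overline{\mathbb{Q}}_{\ell}$ vanishes by proper base change and the fiber dimension $n-1$, and for $j=2$ one rules out a common geometrically irreducible constituent of $\mathcal{F}$ and $R^{i}\tilde{f}_{*}\overline{\mathbb{Q}}_{\ell}$ using exactly the paper's dichotomy (singular at a point $s_{i}$ where the fiber is smooth, versus wildly ramified at $\infty$ against tame at $\infty$). Your extra subdivision of the range $n\leq i\leq 2n-2$ via geometric constancy and the sequence $(\ref{eq : exact})$ is sound but not needed, since the common-constituent argument applies uniformly in $i$ (for $i\geq n+1$ the constituents of the geometrically constant sheaf are geometrically trivial, and the standing hypothesis $H_{c}^{2}(\overline{\mathbb{A}}_{\mathbb{F}_{q}}^{1},\mathcal{F})=0$ already excludes those).
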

\begin{proof}
We have two cases
\begin{itemize}
\item[$(i)$]for $j=2$ and any $i$  we can argue as follow: we can consider an open Zariski dense set $\overline{U}_{i}$ where $\mathcal{F}$ and $R^{i}\tilde{f}_{*}\overline{\mathbb{Q}}_{\ell}$ are both lisse and pointwise pure. Observe that $H_{c}^{2}(\overline{U}_{i},\mathcal{F}\otimes R^{i}\tilde{f}_{*}\overline{\mathbb{Q}}_{\ell})$ does not vanish if and only if there exists a geometrically irreducible component in the Jordan-Holder decomposition of $\mathcal{F}$, $\mathcal{F}_{i}$, and a geometrically irreducible component in the Jordan-Holder decomposition of $R^{i}\tilde{f}_{*}\overline{\mathbb{Q}}_{\ell}$, $\mathcal{G}$, such that $\mathcal{F}_{i}\cong_{\geom}\mathcal{G}$. Now this is not the case: indeed if $\mathcal{F}_{i}$ is wildly ramified at $\infty$ then a contradiction arises since $\mathcal{G}$ is tame at $\infty$ because the sheaves $R^{i}\tilde{f}_{*}\overline{\mathbb{Q}}_{\ell}$ is tame at $\infty $ (Proposition $\ref{prop : 1}$, $(iii)$). If there exists a $s_{i}\in \sing (\mathcal{F}_{i})$ such that $\tilde{X}_{s}$ is smooth, then by part $(i)$ of Proposition $\ref{prop : 1}$, $R^{i}\tilde{f}_{*}\overline{\mathbb{Q}}_{\ell}$ is lisse at $s_{i}$ and thus $\mathcal{G}$ is lisse at $s_{i}$.  Then $\mathcal{F}_{i}$ and $\mathcal{G}$ are not geometrically isomorphic because $s_{i}\in\sing (\mathcal{F}_{i})$ and $s_{i}\notin\sing(\mathcal{G})$.
\item[$(ii)$] if $i\geq 2n-1$ then a consequence of the proper base change Theorem tells us that for any $\lambda\in \overline{\mathbb{A}}_{\mathbb{F}_{q}}^{1}$
\[
(R^{i}\tilde{f}_{*}\overline{\mathbb{Q}}_{\ell})_{\lambda }  \cong H_{c}^{i}(\tilde{X}_{\lambda},\overline{\mathbb{Q}}_{\ell}).
\]
On the other hand, the right hand side of the equation above vanishes since $\tilde{f}$ has dimension $n-1$.
\end{itemize}
\end{proof}

\begin{prop}
For any $i\geq 0$ one has that $c(\mathcal{F}\otimes R^{i}\tilde{f}_{*}\overline{\mathbb{Q}}_{\ell})\ll_{d,e,n,c(\mathcal{F})}1.$
\label{lem : 2}
\end{prop}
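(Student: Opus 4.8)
The plan is to bound the conductor of the tensor product $\mathcal{F}\otimes R^{i}\tilde{f}_{*}\overline{\mathbb{Q}}_{\ell}$ by controlling the three ingredients of the conductor separately: the rank, the set of singularities, and the Swan conductors. First I would dispose of the trivial ranges. For $i\geq 2n-1$ the stalk computation in the proof of Lemma~\ref{lem : cohom2}$(ii)$ shows $R^{i}\tilde{f}_{*}\overline{\mathbb{Q}}_{\ell}=0$, so there is nothing to prove; for $i\geq n+1$ Corollary~\ref{cor : geomconst} tells us $R^{i}\tilde{f}_{*}\overline{\mathbb{Q}}_{\ell}$ is geometrically constant, so its conductor is controlled purely by its rank, which by proper base change equals $\dim H_{c}^{i}(\tilde{X}_{\lambda},\overline{\mathbb{Q}}_{\ell})$ for a smooth fiber $\tilde{X}_\lambda$; this is a complete intersection of dimension $n-1$ and multidegree $(d,e)$, so its Betti numbers are $\ll_{d,e,n}1$ by the Bombieri--Katz bounds already invoked in the proof of Proposition~\ref{prop : coundel}. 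The genuinely interesting case is $0\leq i\leq n$.

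For $0\le i\le n$ I would argue as follows. The rank of $R^{i}\tilde{f}_{*}\overline{\mathbb{Q}}_{\ell}$ at a point where it is lisse equals $\dim H_{c}^{i}$ of the corresponding fiber (again proper base change), and even at bad points the rank is controlled by the generic rank; the generic fiber is a complete intersection of bounded multidegree, so its Betti numbers are $\ll_{d,e,n}1$. Hence $\rank(R^{i}\tilde{f}_{*}\overline{\mathbb{Q}}_{\ell})\ll_{d,e,n}1$, and therefore $\rank(\mathcal{F}\otimes R^{i}\tilde{f}_{*}\overline{\mathbb{Q}}_{\ell})=\rank(\mathcal{F})\cdot\rank(R^{i}\tilde{f}_{*}\overline{\mathbb{Q}}_{\ell})\ll_{d,e,n,c(\mathcal{F})}1$. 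For the singular set, $\sing(\mathcal{F}\otimes R^{i}\tilde{f}_{*}\overline{\mathbb{Q}}_{\ell})\subseteq\sing(\mathcal{F})\cup\sing(R^{i}\tilde{f}_{*}\overline{\mathbb{Q}}_{\ell})$; the first term has size $\le c(\mathcal{F})$, so it remains to bound $|\sing(R^{i}\tilde{f}_{*}\overline{\mathbb{Q}}_{\ell})|$. The sheaf is lisse wherever $\tilde{f}$ is smooth, so its singular locus in $\overline{\mathbb{A}}_{\mathbb{F}_q}^{1}$ is contained in the set of $\lambda$ for which the fiber $\tilde{X}_\lambda=V(G)\cap V(F-\lambda X_0^d)$ is singular. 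This is a proper closed subscheme of $\overline{\mathbb{A}}_{\mathbb{F}_q}^1$ cut out by the vanishing of a suitable discriminant/resultant whose degree is bounded in terms of $d,e,n$ (it is the critical value locus of the map $\tilde f$, and one can read off the bound from the Lefschetz-pencil analysis in \cite{Kat80}[Pages $173$-$174$] or by a direct resultant computation); hence its cardinality is $\ll_{d,e,n}1$. Together with the point $\infty$ this gives $|\sing(\mathcal{F}\otimes R^{i}\tilde{f}_{*}\overline{\mathbb{Q}}_{\ell})|\ll_{d,e,n,c(\mathcal{F})}1$.

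Finally, for the Swan conductors: at every finite point $x$, the Swan conductor of a tensor product satisfies $\Swan_x(\mathcal{A}\otimes\mathcal{B})\le\rank(\mathcal{A})\Swan_x(\mathcal{B})+\rank(\mathcal{B})\Swan_x(\mathcal{A})$, so it suffices to bound $\sum_x\Swan_x(R^{i}\tilde{f}_{*}\overline{\mathbb{Q}}_{\ell})$ and recall $\sum_x\Swan_x(\mathcal{F})\le c(\mathcal{F})$. At finite points $R^{i}\tilde{f}_{*}\overline{\mathbb{Q}}_{\ell}$ is tame (the family $\tilde f$ is a morphism of $\mathbb{F}_q$-varieties with isolated singularities in equal characteristic $p\nmid de$, so there is no wild ramification at finite distance — this is the usual tameness statement for vanishing cycles of such families, cf. \cite{Kat80}), and at $\infty$ it is tame by Proposition~\ref{prop : 1}$(iv)$. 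Hence $\sum_x\Swan_x(R^{i}\tilde{f}_{*}\overline{\mathbb{Q}}_{\ell})=0$, and the Swan contribution to $c(\mathcal{F}\otimes R^{i}\tilde{f}_{*}\overline{\mathbb{Q}}_{\ell})$ is $\ll\rank(R^{i}\tilde{f}_{*}\overline{\mathbb{Q}}_{\ell})\sum_x\Swan_x(\mathcal{F})\ll_{d,e,n,c(\mathcal{F})}1$. Adding the rank, singular-set, and Swan bounds (and the $\dim H_c^0$ term, which by Remark~\ref{oss : zero} is dominated by $\sum_{s\in\sing}\dim(\text{stalk})$, hence by the quantities already estimated) gives $c(\mathcal{F}\otimes R^{i}\tilde{f}_{*}\overline{\mathbb{Q}}_{\ell})\ll_{d,e,n,c(\mathcal{F})}1$. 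The main obstacle I anticipate is making the bound on $|\sing(R^{i}\tilde{f}_{*}\overline{\mathbb{Q}}_{\ell})|$ genuinely uniform and explicit in $d,e,n$ — i.e. pinning down that the locus of singular fibers of $\tilde f$ has degree bounded in terms of the multidegree — together with carefully citing the tameness of $R^{i}\tilde f_*\overline{\mathbb{Q}}_\ell$ at finite points rather than merely at $\infty$.
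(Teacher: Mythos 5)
Your treatment of the rank and of the singular locus matches the paper's (proper base change plus Betti-number bounds for complete intersections; a resultant/discriminant argument for the set of bad $\lambda$). The genuine gap is in your Swan-conductor step. You assert that $R^{i}\tilde{f}_{*}\overline{\mathbb{Q}}_{\ell}$ is tame at all finite points because the fibers have only isolated singularities and $p\nmid de$, calling this "the usual tameness statement for vanishing cycles of such families." No such general statement exists: in characteristic $p$ the vanishing cycles of a one-parameter family with isolated singular points can be wildly ramified (by the Deligne--Laumon/Milnor formula the total dimension of vanishing cycles at an isolated critical point is the Milnor number \emph{plus} a Swan term, which is genuinely nonzero for non-ordinary singularities). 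The fibers $\tilde{X}_{\lambda}=V(G)\cap V(F-\lambda X_{0}^{d})$ are only known to have \emph{isolated} singularities (Proposition~\ref{prop : 1}$(ii)$), not ordinary double points, so the Picard--Lefschetz tameness does not apply, and Katz's result quoted in Proposition~\ref{prop : 1}$(iv)$ gives tameness \emph{only at $\infty$}. Since your entire Swan bound rests on $\sum_{x}\Swan_{x}(R^{i}\tilde{f}_{*}\overline{\mathbb{Q}}_{\ell})=0$, this step does not go through as written.

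The paper gets around exactly this difficulty by bounding the total Swan conductor \emph{indirectly}: it first bounds $\dim H_{c}^{j}(\overline{\mathbb{A}}_{\mathbb{F}_{q}}^{1},R^{i}\tilde{f}_{*}\overline{\mathbb{Q}}_{\ell})$ for $j=0,1,2$ (the $j=1$ case via the Leray spectral sequence for $\tilde{f}$ together with excision bounds on $\dim H_{c}^{*}(\tilde{X},\overline{\mathbb{Q}}_{\ell})$, and the $j=0$ case via stalk bounds at singular fibers from Katz's appendix on varieties with isolated singularities), hence bounds the Euler characteristic $\chi_{c}$, and only then reads off $\sum_{x}\Swan_{x}(j_{*}j^{*}R^{i}\tilde{f}_{*}\overline{\mathbb{Q}}_{\ell})\ll_{d,e,n}1$ from the Grothendieck--Ogg--Shafarevich formula, since every other term in that formula is already controlled. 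Relatedly, your dismissal of the $\dim H_{c}^{0}$ term as "dominated by the quantities already estimated" is too quick: the stalks at singular points are $H_{c}^{i}$ of \emph{singular} fibers, which are not controlled by the generic rank and require the separate input just mentioned. To repair your proof you would need to replace the tameness claim by this Euler-characteristic argument (or an equivalent one); the rest of your outline is sound, and reducing first to $c(R^{i}\tilde{f}_{*}\overline{\mathbb{Q}}_{\ell})$ via the tensor-product conductor inequality of \cite{FKM15} (as the paper does) would also spare you the Swan inequality for tensor products.
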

\begin{proof}
Thanks to \cite[Proposition $8.2$, part $(3)$]{FKM15}, we know that $c(\mathcal{F}\otimes R^{i}\tilde{f}_{*}\overline{\mathbb{Q}}_{\ell})\leq 5c(\mathcal{F})^{2}c(R^{i}\tilde{f}_{*}\overline{\mathbb{Q}}_{\ell})^{2}$. Hence, we need to prove that $
c(R^{i}\tilde{f}_{*}\overline{\mathbb{Q}}_{\ell})\ll_{d,e,n}1
$. At this point it is useful to recall that the conductor of the constructible $\ell$-adic sheaf $ R^{i}\tilde{f}_{*}\overline{\mathbb{Q}}_{\ell}$ is defined as
\begin{equation}
\begin{split}
c(R^{i}\tilde{f}_{*}\overline{\mathbb{Q}}_{\ell})&=\rank(R^{i}\tilde{f}_{*}\overline{\mathbb{Q}}_{\ell})+\sing(R^{i}\tilde{f}_{*}\overline{\mathbb{Q}}_{\ell})\\&+\sum_{x}\text{Swan}_{x}(j_{*}j^{*}R^{i}\tilde{f}_{*}\overline{\mathbb{Q}}_{\ell})+\dim H_{c}^{0}(\overline{\mathbb{A}}_{\mathbb{F}_{q}},R^{i}\tilde{f}_{*}\overline{\mathbb{Q}}_{\ell}),
\label{eq : condprop}
\end{split}
\end{equation}
where $j:U\hookrightarrow\overline{\mathbb{P}}_{\mathbb{F}_{q}}^{1}$ is the largest dense open subset of $\overline{\mathbb{P}}_{\mathbb{F}_{q}}^{1}$ where $R^{i}\tilde{f}_{*}\overline{\mathbb{Q}}_{\ell}$ is lisse. We will handle each term in the right hand side of $(\ref{eq : condprop})$ separately.
\begin{lem}
For any $i\geq 0$ we have $\sing(R^{i}\tilde{f}_{*}\overline{\mathbb{Q}}_{\ell})\ll_{d,e,n} 1$.
\label{lem : sing}
\end{lem}
\begin{proof}
Let $S:=\{\lambda\in\overline{\mathbb{A}}_{\mathbb{F}_{q}}^{1}:\tilde{f}\text{ is not smooth in }\lambda\}$. Thanks to Proposition $\ref{prop : 1}.(ii)$, we have $|\sing (R^{i}\tilde{f}_{*}\overline{\mathbb{Q}}_{\ell})|\leq|\{\lambda\in\overline{\mathbb{A}}_{\mathbb{F}_{q}}^{1}:\tilde{f}\text
{ is not smooth in }\lambda\}|$.
Recall that $\tilde{X}_{\lambda}=V(G)\cap V(F-\lambda X_{0}^{d})$ (Lemma $\ref{lem : encaps}.(1)$). Since by hypothesis $V(G)\cap V(F-\lambda X_{0}^{d})\cap V(X_{0})$ is smooth, it is enough to bound the number of $\lambda$s such that the affine variety $V(g)\cap V(f-\lambda)$ is singular. The variety $V(g)\cap V(f-\lambda)$ is singular if the Jacobian matrix
\[
\begin{pmatrix}
f-\lambda  & g \\ \frac{\partial f}{\partial X_{1}} & \frac{\partial g}{\partial X_{1}} \\ \vdots & \vdots \\ \frac{\partial f}{\partial X_{n+1}} & \frac{\partial g}{\partial X_{n+1}}
\end{pmatrix}
\] 
has rank $\leq 1$ for some $\mathbf{v}:=(v_{1},...,v_{n+1})\in\mathbb{A}_{\mathbb{F}_{q}}^{n+1}$. Let us consider the polynomials $H_{\lambda,0}:=f-\lambda, H_{\lambda,n+1}:=g$, and for any $i=1,...,n$
\[
H_{\lambda,i}=\det
\begin{pmatrix}
\frac{\partial f}{\partial X_{i}} & \frac{\partial g}{\partial X_{i}} \\ \frac{\partial f}{\partial X_{i+1}} & \frac{\partial g}{\partial X_{i+1}}
\end{pmatrix}.
\]
If $V(g)\cap V(f-\lambda)$ is singular then there exists $\mathbf{v}\in\overline{\mathbb{A}}_{\mathbb{F}_{q}}^{n+1}$ such that $H_{\lambda,i}(\mathbf{v})=0$ for $i=0,...,n+1$. Notice that $H_{\lambda,i}$ are polynomial of degree at most $(e-1)(d-1)$ for $i=1,...,n$ and $\deg (H_{\lambda, 0})=\deg f$, $\deg (H_{\lambda, n+1})=\deg g$. Moreover, $H_{\lambda,i}$ does not depend on $\lambda$ for $i=1,...,n+1$. Recall that the resultant of $k+1$ polynomials in $k$ variables $f_{1},...,f_{k+1}$ of degree respectively $d_{1},...,d_{k+1}$ is an irreducible polynomial in the coefficients of $f_{1},...,f_{k+1}$ which vanishes if $f_{1},...,f_{k+1}$ have a common root. Using this we can conclude that if $V(g)\cap V(f-\lambda)$ is singular then $\text{Res}(H_{\lambda,0},...H_{\lambda,n+1})=0$. On the other hand, for any $i=1,...,n+1$ the coefficients of $H_{\lambda,i}$ are independent of $\lambda$ and the ones of $H_{\lambda,0}$ can be viewed as linear polynomials in $\lambda$, then $r(\lambda):=\text{Res}(H_{\lambda,0},...H_{\lambda,n+1})$ is a polynomial. Moreover $r(\lambda)$ is not the zero polynomial because $r(0)\neq 0$ by hypothesis ($V(G)\cap V(F)$ is smooth). Then $|\{\lambda\in\overline{\mathbb{A}}_{\mathbb{F}_{q}}^{1}:\tilde{f}\text{ is not smooth in }\lambda\}|\leq\deg (r(\lambda))\ll_{d,e,n} 1$, thanks to \cite[Chapter $13$, Proposition $1.1$]{GKZ08}.
\end{proof}
Let us now calculate $\rank (R^{i}\tilde{f}_{*}\overline{\mathbb{Q}}_{\ell})$.
\begin{lem}
For any $i\geq 0$ we have $\rank (R^{i}\tilde{f}_{*}\overline{\mathbb{Q}}_{\ell})\ll 1_{d,e,n}$.
\label{lem : rank}
\end{lem}
\begin{proof}
To prove the Lemma it is enough to calculate the dimension of the geometric fibers $(R^{i}\tilde{f}_{*}\overline{\mathbb{Q}}_{\ell})_{\lambda}$ when $\lambda$ is a lisse point of $R^{i}\tilde{f}_{*}\overline{\mathbb{Q}}_{\ell}$. We have already observed that $(R^{i}\tilde{f}_{*}\overline{\mathbb{Q}}_{\ell})_{\lambda }  \cong H_{c}^{i}(\tilde{X}_{\lambda},\overline{\mathbb{Q}}_{\ell})$. Thus, we need to compute $\dim (H_{c}^{i}(\tilde{X}_{\lambda},\overline{\mathbb{Q}}_{\ell}))$ for any $i$. We recall that if $\tilde{f}$ is smooth at $\lambda$, then $\tilde{X}_{\lambda}$ is a smooth variety of complete intersection of dimension $n-1$. Using the computations made in \cite[Theorem $8.1$]{Del74} we get
\begin{equation}
\dim (H_{c}^{i}(\tilde{X}_{\lambda},\overline{\mathbb{Q}}_{\ell}) =
\begin{cases}
 0 & \text{if $0\leq i \leq 2n-2$, $2\nmid i$ and $i\neq n-1$,} \\
 1 & \text{if $0\leq i \leq 2n-2$, $2 | i$ and $i\neq n-1$,} \\
 b_{n-1}(\tilde{X}_{\lambda}) -\frac{1+(-1)^{n-1}}{2} & \text{if $i=n-1$,}
\end{cases}
\label{eq : rank1}
\end{equation}
where $b_{n-1}(\tilde{X}_{\lambda})$ is the $(n-1)$-th Betti number which can be bounded in terms of $d,e,n$ only (\cite{Bom78}[Theorem $1$A]). Then $\rank (R^{i}\tilde{f}_{*}\overline{\mathbb{Q}}_{\ell}))\ll_{d,e,n}1$.
\end{proof}
\begin{lem}
For any $i\geq 0$ we have $\dim H_{c}^{0}(\overline{\mathbb{A}}_{\mathbb{F}_{q}}^{1},R^{i}\tilde{f}_{*}\overline{\mathbb{Q}}_{\ell})\ll_{d,e,n} 1$.
\label{lem : coh0}
\end{lem}
\begin{proof}
Using Remark $\ref{oss : zero}$, we have that $\dim H_{c}^{0}(\overline{\mathbb{A}}_{\mathbb{F}_{q}}^{1},R^{i}\tilde{f}_{*}\overline{\mathbb{Q}}_{\ell})\leq\sum_{\lambda\in \sing (R^{i}\tilde{f}_{*}\overline{\mathbb{Q}}_{\ell})}\dim ((R^{i}\tilde{f}_{*}\overline{\mathbb{Q}}_{\ell})_{\lambda})$. Then we need to bound $\dim ((R^{i}\tilde{f}_{*}\overline{\mathbb{Q}}_{\ell})_{\lambda})$ when $\lambda\in\sing (R^{i}\tilde{f}_{*}\overline{\mathbb{Q}}_{\ell})$. On the other hand, since $(R^{i}\tilde{f}_{*}\overline{\mathbb{Q}}_{\ell})_{\lambda }  \cong H_{c}^{i}(\tilde{X}_{\lambda},\overline{\mathbb{Q}}_{\ell})$, it is enough to show that that $\dim H_{c}^{i}(\tilde{X}_{\lambda},\overline{\mathbb{Q}}_{\ell})\ll_{d,e,n} 1$ in the case where $\tilde{X}_{\lambda}$ is a variety with at most isolated singularities (Proposition $\ref{prop : 1}.(ii)$). This is done, for example, in \cite[Appendix, Theorem $1$]{Kat91}. Hence
\[
\dim H_{c}^{0}(\overline{\mathbb{A}}_{\mathbb{F}_{q}}^{1},R^{i}\tilde{f}_{*}\overline{\mathbb{Q}}_{\ell})\leq |\sing ( R^{i}\tilde{f}_{*}\overline{\mathbb{Q}}_{\ell})|\cdot\max_{\lambda\in\sing(R^{i}\tilde{f}_{*}\overline{\mathbb{Q}}_{\ell}))}(\dim(R^{i}\tilde{f}_{*}\overline{\mathbb{Q}}_{\ell})_{\lambda })\ll_{d,e,n}1.
\]
\end{proof}
To conclude the proof of the Proposition, we need to bound the Swan conductors at singular points. To do this we first need the following
\begin{lem}
For any $i\geq 0$ we have that $\dim H_{c}^{1}(\overline{\mathbb{A}}_{\mathbb{F}_{q}}^{1},R^{i}\tilde{f}_{*}\overline{\mathbb{Q}}_{\ell})\ll_{d,e,n}1$.
\label{lem : coh1}
\end{lem}
\begin{proof}
The cohomology groups $H_{c}^{1}(\overline{\mathbb{A}}_{\mathbb{F}_{q}}^{1},R^{i}\tilde{f}_{*}\overline{\mathbb{Q}}_{\ell})$ are the starting objects for the Leray Spectral sequence arising from the map $\tilde{f}:\tilde{X}\longrightarrow\overline{\mathbb{A}}_{\mathbb{F}_{q}}^{1}$ and the $\ell$-adic sheaf $\overline{\mathbb{Q}}_{\ell}$ on $\overline{\mathbb{A}}_{\mathbb{F}_{q}}^{1}$, i.e. $H_{c}^{j}(\overline{\mathbb{A}}_{\mathbb{F}_{q}}^{1},R^{i}\tilde{f}_{*}\overline{\mathbb{Q}}_{\ell})=E_{2}^{j,i}\Rightarrow E^{i+j}=H_{c}^{i+j}(\tilde{X},\overline{\mathbb{Q}}_{\ell})$. On the other hand, $E_{\infty}^{1,i}=E_{2}^{1,i}= H_{c}^{1}(\overline{\mathbb{A}}_{\mathbb{F}_{q}}^{1},R^{i}\tilde{f}_{*}\overline{\mathbb{Q}}_{\ell})$ since $E_{r}^{1,i}= H_{c}^{1}(\overline{\mathbb{A}}_{\mathbb{F}_{q}}^{1},R^{i}\tilde{f}_{*}\overline{\mathbb{Q}}_{\ell})$ for any $r\geq 2$ because $H_{c}^{j}(\overline{\mathbb{A}}_{\mathbb{F}_{q}}^{1},R^{i}\tilde{f}_{*}\overline{\mathbb{Q}}_{\ell})=0$ if $j> 2$. Thus, one concludes that $\dim  H_{c}^{1}(\overline{\mathbb{A}}_{\mathbb{F}_{q}}^{1},R^{i}\tilde{f}_{*}\overline{\mathbb{Q}}_{\ell})\leq \dim H_{c}^{i+1}(\tilde{X},\overline{\mathbb{Q}}_{\ell})$. Hence, to conclude the proof it is enough to show that $\dim H_{c}^{k}(\tilde{X},\overline{\mathbb{Q}}_{\ell})\ll_{d,e,n}1$ for any $k\geq 0$. We start proving that $\dim ( H_{c}^{k}(\overline{V},\overline{\mathbb{Q}}_{\ell}))\ll_{d,e,n} 1$. Recall that $X=V\sqcup (X\cap L)$. Let us denote by $j:V\hookrightarrow X$ the open embedding of $V$ in $X$ and by $i: (X\cap L)\hookrightarrow X$ the closed embedding of $X\cap L$ in $X$. Then we have the exact sequence
\[
0\rightarrow j_{!}j^{*}\overline{\mathbb{Q}}_{\ell}\rightarrow \overline{\mathbb{Q}}_{\ell}\rightarrow i_{*}i^{*}\overline{\mathbb{Q}}_{\ell}\rightarrow 0,
\]
where $ \overline{\mathbb{Q}}_{\ell}$ denotes the trivial sheaf. This short exact sequence leads to the long exact sequence
\[
\cdots\rightarrow H_{c}^{i}(\overline{X},\overline{\mathbb{Q}}_{\ell})\rightarrow H_{c}^{i}(\overline{X\cap L},\overline{\mathbb{Q}}_{\ell})\rightarrow H_{c}^{i+1}(\overline{V},\overline{\mathbb{Q}}_{\ell})\rightarrow\cdots.
\] 
By hypothesis $X$ and $X\cap L$ are smooth complete intersection varieties, then $\dim (H_{c}^{i}(\overline{X},\overline{\mathbb{Q}}),\dim ( H_{c}^{i}(\overline{X\cap L},\overline{\mathbb{Q}}))\ll_{d,e,n}1$ for any $i$ thanks to the computation made in \cite[Theorem $8.1$]{Del74}. Hence, $\dim ( H_{c}^{i}(\overline{V},\overline{\mathbb{Q}}))\ll_{d,e,n}1$ using the exactness of the sequence above. Similarly, using the decomposition $\tilde {X}=V\sqcup \tilde{Z}$, we get the exact sequence
\[
\cdots\rightarrow H_{c}^{i}(\tilde{X},\overline{\mathbb{Q}}_{\ell})\rightarrow H_{c}^{i}(\overline{\tilde{Z}},\overline{\mathbb{Q}}_{\ell})\rightarrow H_{c}^{i+1}(\overline{V},\overline{\mathbb{Q}}_{\ell})\rightarrow\cdots.
\] 
Then the Lemma follows since $\dim ( H_{c}^{i}(\overline{V},\overline{\mathbb{Q}}))\ll_{d,e,n}1$, $\dim ( H_{c}^{i}(\overline{\tilde{Z}},\overline{\mathbb{Q}}))=\dim ( H_{c}^{i-2}(\overline{Z}),\overline{\mathbb{Q}})$ by K\"unneth formula (see \cite[VI.8]{Mil80}) and $\dim ( H_{c}^{i-2}(\overline{Z}),\overline{\mathbb{Q}})\ll_{d,e,n}1$.

\end{proof}
\begin{cor}
The Euler characteristic $\chi_{c}(\overline{\mathbb{A}}_{\mathbb{F}_{q}}^{1},R^{i}\tilde{f}_{*}\overline{\mathbb{Q}}_{\ell})\ll_{d,e,n} 1$.
\label{cor : euler}
\end{cor}
\begin{proof}
We have that
\[
\chi_{c}(\overline{\mathbb{A}}_{\mathbb{F}_{q}}^{1},R^{i}\tilde{f}_{*}\overline{\mathbb{Q}}_{\ell})=\dim (H_{c}^{0}(\overline{\mathbb{A}}_{\mathbb{F}_{q}}^{1},R^{i}\tilde{f}_{*}\overline{\mathbb{Q}}_{\ell}))-\dim (H_{c}^{1}(\overline{\mathbb{A}}_{\mathbb{F}_{q}}^{1},R^{i}\tilde{f}_{*}\overline{\mathbb{Q}}_{\ell}))+\dim (H_{c}^{2}(\overline{\mathbb{A}}_{\mathbb{F}_{q}}^{1},R^{i}\tilde{f}_{*}\overline{\mathbb{Q}}_{\ell})).
\]
Now $\dim (H_{c}^{0}(\overline{\mathbb{A}}_{\mathbb{F}_{q}}^{1},R^{i}\tilde{f}_{*}\overline{\mathbb{Q}}_{\ell})),\dim (H_{c}^{1}(\overline{\mathbb{A}}_{\mathbb{F}_{q}}^{1},R^{i}\tilde{f}_{*}\overline{\mathbb{Q}}_{\ell}))\ll_{d,e,n}1$, thanks to Lemma $\ref{lem : coh0}$ and Lemma $\ref{lem : coh1}$. Moreover, $\dim (H_{c}^{2}(\overline{\mathbb{A}}_{\mathbb{F}_{q}}^{1},R^{i}\tilde{f}_{*}\overline{\mathbb{Q}}_{\ell}))\leq \rank (R^{i}\tilde{f}_{*}\overline{\mathbb{Q}}_{\ell})\ll_{d,e,n}1$, thanks to Lemma $\ref{lem : rank}$.
\end{proof}
Finally, we can prove
\begin{cor}
For any $i\geq 0$ we have $\sum_{x}\Swan_{x}(j_{*}j^{*}R^{i}\tilde{f}_{*}\overline{\mathbb{Q}}_{\ell})\ll_{d,e,n}1$.
\label{cor : swan}
\end{cor}
\begin{proof}
Let $U\hookrightarrow\overline{\mathbb{A}}_{\mathbb{F}_{q}}^{1}$ be the largest open subset such that $R^{i}\tilde{f}_{*}\overline{\mathbb{Q}}_{\ell}$ is lisse, then one has
\[
\chi_{c}(\overline{\mathbb{A}}_{\mathbb{F}_{q}}^{1},R^{i}\tilde{f}_{*}\overline{\mathbb{Q}}_{\ell})=\chi_{c}(\overline{U},R^{i}\tilde{f}_{*}\overline{\mathbb{Q}}_{\ell})+\sum_{s\in\sing (R^{i}\tilde{f}_{*}\overline{\mathbb{Q}}_{\ell})}\dim ((R^{i}\tilde{f}_{*}\overline{\mathbb{Q}}_{\ell})_{s}).
\]
Thus, $\chi_{c}(\overline{U},R^{i}\tilde{f}_{*}\overline{\mathbb{Q}}_{\ell})\ll_{d,e,n} 1$, thanks to Lemma $\ref{lem : coh0}$ and Corollary $\ref{cor : euler}$. On the other hand, using the Grothendieck–Ogg–Shafarevich Formula we get
\[
\chi_{c}(\overline{U},R^{i}\tilde{f}_{*}\overline{\mathbb{Q}}_{\ell})= (2-|\sing (R^{i}\tilde{f}_{*}\overline{\mathbb{Q}}_{\ell})|)\cdot\rank (R^{i}\tilde{f}_{*}\overline{\mathbb{Q}}_{\ell})-\sum_{x}\Swan_{x}(j_{*}j^{*}R^{i}\tilde{f}_{*}\overline{\mathbb{Q}}_{\ell}),
\]
then the Corollary follows from Lemma $\ref{lem : sing}$, Lemma $\ref{lem : rank}$.
\end{proof}
Using Lemma $\ref{lem : sing}$, Lemma $\ref{lem : rank}$, Lemma $\ref{lem : coh0}$ and Corollary $\ref{cor : swan}$ we can bound any quantity appearing in $c(R^{i}\tilde{f}_{*}\overline{\mathbb{Q}}_{\ell})$ in terms of $d,e$ and $n$ only and this conclude the proof of the proposition.

\end{proof}

\subsubsection*{Frobenius action}

\begin{lem}
For any even $i\geq n+1$ one has that:
\[
t_{R^{i}\tilde{f}_{*}\overline{\mathbb{Q}}_{\ell}}(\lambda)=q^{\frac{i}{2}},
\]
for any $\lambda\in\overline{\mathbb{A}}_{\mathbb{F}_{q}}^{1}(\mathbb{F}_{q})$. Moreover
\[
t_{j_{*}j^{*} R^{n}\tilde{f}_{*}\overline{\mathbb{Q}}_{\ell}}(\lambda)=\frac{q^{\frac{n}{2}}(1+(-1)^{n})}{2},
\]
for any $\lambda\in\overline{\mathbb{A}}_{\mathbb{F}_{q}}^{1}(\mathbb{F}_{q})$.
\end{lem}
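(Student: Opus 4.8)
The plan is to observe that the two sheaves appearing in the statement are lisse and \emph{geometrically constant} on $\overline{\mathbb{A}}_{\mathbb{F}_q}^{1}$, hence have constant trace functions, and then to pin down the value of that constant by specializing at the single point $\lambda=0$, where the fiber of $\tilde f$ is the smooth complete intersection $\tilde X_{0}=V(G)\cap V(F)$.

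For the first assertion, note first that if $i\geq 2n-1$ then $R^{i}\tilde f_{*}\overline{\mathbb{Q}}_{\ell}=0$ (its stalks are $H_{c}^{i}(\tilde X_{\lambda},\overline{\mathbb{Q}}_{\ell})$, which vanish since $\dim\tilde X_{\lambda}=n-1$, as in the proof of Lemma \ref{lem : cohom2}), so the content is in the range $n+1\leq i\leq 2(n-1)$; there $R^{i}\tilde f_{*}\overline{\mathbb{Q}}_{\ell}$ is lisse on $\overline{\mathbb{A}}_{\mathbb{F}_q}^{1}$ and geometrically constant by Proposition \ref{prop : 1}$(iii)$ and Corollary \ref{cor : geomconst}. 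Likewise $j_{*}j^{*}R^{n}\tilde f_{*}\overline{\mathbb{Q}}_{\ell}$ is geometrically constant by Proposition \ref{prop : 1}$(v)$, and it is lisse on all of $\overline{\mathbb{A}}_{\mathbb{F}_q}^{1}$ because a geometrically constant constructible sheaf is the pull-back of a sheaf from $\mathrm{Spec}\,\mathbb{F}_q$. For any lisse geometrically constant $\overline{\mathbb{Q}}_{\ell}$-sheaf $\mathcal{G}$ on $\overline{\mathbb{A}}_{\mathbb{F}_q}^{1}$, the geometric fundamental group acts trivially on $\mathcal{G}_{\overline\eta}$, so the attached representation factors through $\Gal(\overline{\mathbb{F}}_q/\mathbb{F}_q)$ and the local Frobenius at any $\lambda\in\overline{\mathbb{A}}_{\mathbb{F}_q}^{1}(\mathbb{F}_q)$ acts as the geometric Frobenius of $\mathbb{F}_q$; hence $t_{\mathcal{G}}(\lambda)=\Tr(\Fr_{\mathbb{F}_q}\mid\mathcal{G}_{\overline\eta})$ is independent of $\lambda$. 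So it suffices to compute the stalks of our two sheaves at $\overline 0$.

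Since $\tilde f$ is proper (Lemma \ref{lem : encaps}$(1)$) and smooth over a Zariski neighbourhood of $0$, and the punctual sheaf $\mathcal{P}$ of Proposition \ref{prop : 1}$(v)$ vanishes on such a neighbourhood, proper base change gives
\[
(R^{i}\tilde f_{*}\overline{\mathbb{Q}}_{\ell})_{\overline 0}\cong H^{i}(\tilde X_{0},\overline{\mathbb{Q}}_{\ell}),\qquad (j_{*}j^{*}R^{n}\tilde f_{*}\overline{\mathbb{Q}}_{\ell})_{\overline 0}\cong (R^{n}\tilde f_{*}\overline{\mathbb{Q}}_{\ell})_{\overline 0}\cong H^{n}(\tilde X_{0},\overline{\mathbb{Q}}_{\ell}),
\]
where we use that $\tilde X_{0}$ is proper (so $H_{c}^{\bullet}=H^{\bullet}$) and, for the second chain, that $\mathcal{P}$ is trivial near $0$. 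Now $\tilde X_{0}=V(G)\cap V(F)$ is, by the standing hypotheses of Theorem \ref{thm : 1}, a smooth complete intersection in $\overline{\mathbb{P}}_{\mathbb{F}_q}^{n+1}$ of dimension $n-1$ and multidegree $(e,d)$. By the weak Lefschetz theorem together with Poincaré duality — the same facts about the cohomology of smooth complete intersections invoked through \cite[Theorem $8.1$]{Del74} in the proof of Lemma \ref{lem : rank} — for each $j\neq n-1$ with $0\leq j\leq 2(n-1)$ there is a Galois-equivariant isomorphism $H^{j}(\tilde X_{0},\overline{\mathbb{Q}}_{\ell})\cong\overline{\mathbb{Q}}_{\ell}(-j/2)$ when $j$ is even and $H^{j}(\tilde X_{0},\overline{\mathbb{Q}}_{\ell})=0$ when $j$ is odd, induced by iterated cup product with the class of a hyperplane for $j<n-1$ and by its Poincaré dual for $j>n-1$. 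Taking $j=i$ (admissible since $i\geq n+1>n-1$) gives $\Tr(\Fr_{\mathbb{F}_q}\mid H^{i}(\tilde X_{0},\overline{\mathbb{Q}}_{\ell}))=q^{i/2}$, which is the first assertion; taking $j=n>n-1$ gives $H^{n}(\tilde X_{0},\overline{\mathbb{Q}}_{\ell})=\overline{\mathbb{Q}}_{\ell}(-n/2)$ if $n$ is even and $=0$ if $n$ is odd, whence $\Tr(\Fr_{\mathbb{F}_q}\mid H^{n}(\tilde X_{0},\overline{\mathbb{Q}}_{\ell}))=\tfrac12 q^{n/2}\bigl(1+(-1)^{n}\bigr)$, which is the second assertion.

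I expect the only delicate point to be checking that the Lefschetz isomorphism is compatible with the Galois action, so that the Frobenius eigenvalue is \emph{exactly} $q^{j/2}$ rather than merely an algebraic number of weight $j$; this holds because the isomorphism is realized by cup product with the Tate-twisted, Galois-invariant hyperplane class, exactly as in Deligne's description of the cohomology of a smooth complete intersection. The remaining steps are formal consequences of proper base change and of the fact that a geometrically constant lisse sheaf has constant trace function.
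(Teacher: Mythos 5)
Your proof is correct, but it takes a genuinely different route from the paper's. Both arguments begin the same way: the sheaves $R^{i}\tilde f_{*}\overline{\mathbb{Q}}_{\ell}$ (for even $i\geq n+1$) and $j_{*}j^{*}R^{n}\tilde f_{*}\overline{\mathbb{Q}}_{\ell}$ are geometrically constant, so each trace function is a single $q$-Weil number $\alpha_i$ of weight $\leq i$. The divergence is in how that number is pinned down. The paper determines the $\alpha_i$ \emph{globally}: it computes $|\tilde X(\mathbb{F}_{q})|$ (and, to separate the various $\alpha_i$, $|\tilde X(\mathbb{F}_{q^{\nu}})|$) in two ways — once through the Leray spectral sequence via $\sum_{i,j}(-1)^{i+j}\Tr(\Fr|H_{c}^{j}(\overline{\mathbb{A}}^{1}_{\mathbb{F}_{q}},R^{i}\tilde f_{*}\overline{\mathbb{Q}}_{\ell}))$ together with Lemma \ref{lem : 4} and the conductor bounds, and once through the decomposition $\tilde X=V\sqcup\tilde Z$ and the point counts of Proposition \ref{prop : coundel} and \cite{Del74}[Theorem $8.1$] — and matches the main terms. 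You instead determine the constant \emph{locally}: evaluate at $\lambda=0$, identify the stalk with $H^{i}(\tilde X_{0},\overline{\mathbb{Q}}_{\ell})$ by proper base change (using that $\mathcal{P}$ vanishes near $0$ for the $j_{*}j^{*}R^{n}$ case), and read off the Frobenius eigenvalue from the Lefschetz description of the non-middle cohomology of the smooth complete intersection $\tilde X_{0}=V(G)\cap V(F)$ as $\overline{\mathbb{Q}}_{\ell}(-j/2)$ spanned by a power of the hyperplane class. Your route is shorter and avoids both the passage to extensions $\mathbb{F}_{q^{\nu}}$ and the matching of asymptotic expansions, at the cost of invoking the finer Galois-module structure of $H^{*}$ of a complete intersection (SGA 7-type input) rather than only the point-counting consequence of \cite{Del74}[Theorem $8.1$]; the delicate point you flag — that the eigenvalue is exactly $q^{j/2}$ because the generator is the Galois-invariant class $h^{j/2}$ — is indeed the one thing that must be said, and you say it. Both proofs use the same standing hypothesis that $V(G)\cap V(F)$ is smooth.
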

\begin{proof}
By Corollary $\ref{cor : geomconst}$, we know that for any $i\geq n+1$ even the sheaf $R^{i}\tilde{f}_{*}\overline{\mathbb{Q}}_{\ell}$ is geometrically irreducible and geometrically constant, i.e. $R^{i}\tilde{f}_{*}\overline{\mathbb{Q}}_{\ell}\cong\chi_{i}\otimes\overline{\mathbb{Q}}_{\ell}$ where $\chi_{i}:\Gal\big(\overline{\mathbb{F}}_{q}/\mathbb{F}_{q}\big)\longrightarrow\overline{\mathbb{Q}}_{\ell}^{\times}$ is a character. Hence, $t_{R^{i}\tilde{f}_{*}\overline{\mathbb{Q}}_{\ell}}(\lambda)=\alpha_{i}$, where $\alpha_{i}=\chi_{i} (\Fr^{\geom})$ and $\alpha_{i}$ is a $q$-Weil number of weight $\leq i$ (\cite[Theorem $1$]{Del80}). Applying the Grothendieck-Lefschetz trace formula (\cite[Expose VI]{SGA4.5}) one has that
\[
\alpha_{i} q=\sum_{x\in\mathbb{F}_{q}}t_{\chi_{i}\otimes\overline{\mathbb{Q}}_{\ell}}(x)=\Tr(\Fr|H_{c}^{2}(\overline{\mathbb{A}}_{\mathbb{F}_{q}}^{1},\chi_{i}\otimes\overline{\mathbb{Q}}_{\ell})).
\]
For the sheaf $j_{*}j^{*} R^{n}\tilde{f}_{*}\overline{\mathbb{Q}}_{\ell}$ we have to distinguish two cases:
\begin{itemize}
\item[$(i)$] $n$ odd. In this case one has $\rank (R^{n}\tilde{f}_{*}\overline{\mathbb{Q}}_{\ell})=0$, then $\rank (j_{*}j^{*} R^{n}\tilde{f}_{*}\overline{\mathbb{Q}}_{\ell})=0$ and this implies $t_{j_{*}j^{*} R^{n}\tilde{f}_{*}\overline{\mathbb{Q}}_{\ell}}(\lambda)=0$ for any $\lambda\in\overline{\mathbb{A}}_{\mathbb{F}_{q}}^{1}$.
\item[$(ii)$] $n$ even. By Proposition $\ref{prop : 1}$, the sheaf $j_{*}j^{*} R^{n}\tilde{f}_{*}\overline{\mathbb{Q}}_{\ell}$ is geometrically constant of rank $1$. Then $j_{*}j^{*}R^{n}\tilde{f}_{*}\overline{\mathbb{Q}}_{\ell}\cong\chi_{n}\otimes\overline{\mathbb{Q}}_{\ell}$ for some character $\chi_{n}:\Gal\big(\overline{\mathbb{F}}_{q}/\mathbb{F}_{q}\big)\longrightarrow\overline{\mathbb{Q}}_{\ell}^{\times}$. Hence, $
t_{j_{*}j^{*}R^{n}\tilde{f}_{*}\overline{\mathbb{Q}}_{\ell}}(\lambda)=\alpha_{n}$, with $\alpha_{n}=\chi_{n}(\Fr^{\geom})$, and $\alpha_{n}$ is a $q$-Weil number of weight $\leq n$ (\cite[Theorem $1$]{Del80}).
\end{itemize}
We can state both cases by writing
\[
t_{j_{*}j^{*}R^{n}\tilde{f}_{*}\overline{\mathbb{Q}}_{\ell}}(\lambda)=\frac{\alpha_{n}(1+(-1)^{n})}{2}.
\]
By the Grothendieck-Lefschetz trace formula we get
\[
\frac{q\alpha_{n}(1+(-1)^{n})}{2}=\sum_{x\in\mathbb{F}_{q}}t_{\chi_{n}\otimes\overline{\mathbb{Q}}_{\ell}}(x)=\Tr(\Fr|H_{c}^{2}(\overline{\mathbb{A}}_{\mathbb{F}_{q}}^{1},\chi_{n}\otimes\overline{\mathbb{Q}}_{\ell})).
\]
This shows that in order to compute $\alpha_{i}$ it is enough compute $\Tr(\Fr|H_{c}^{2}(\overline{\mathbb{A}}_{\mathbb{F}_{q}}^{1},R^{i}\tilde{f}_{*}\overline{\mathbb{Q}}_{\ell})$ and $\Tr(\Fr|H_{c}^{2}(\overline{\mathbb{A}}_{\mathbb{F}_{q}}^{1},j_{*}j^{*} R^{n}\tilde{f}_{*}\overline{\mathbb{Q}}_{\ell}))$. Using \cite[Lemma $11$]{Kat02a}, we can write
\[
|\tilde{X}(\mathbb{F}_{q})|=\sum_{i}(-1)^{i}\sum_{j=0}^{2}(-1)^{j}\Tr(\Fr|H_{c}^{j}(\overline{\mathbb{A}}^{1}_{\mathbb{F}_{q}},R^{i}\tilde{f}_{*}\overline{\mathbb{Q}}_{\ell})).
\]
Then using the fact that for $i\geq n+1$ the sheaves $R^{i}\tilde{f}_{*}\overline{\mathbb{Q}}_{\ell}$ are geometrically constant, that  $R^{i}\tilde{f}_{*}\overline{\mathbb{Q}}_{\ell}=0$ for $i$ odd, and applying Lemma $\ref{lem : 4}$ one gets
\[
\begin{split}
|\tilde{X}(\mathbb{F}_{q})| &=\sum_{i\text{ even }, i\geq n+1}\Tr(\Fr|H_{c}^{2}(\overline{\mathbb{A}}^{1}_{\mathbb{F}_{q}},R^{i}\tilde{f}_{*}\overline{\mathbb{Q}}_{\ell}))\\&+(-1)^{n}(\Tr(\Fr|H_{c}^{0}(\overline{\mathbb{A}}^{1}_{\mathbb{F}_{q}},\mathcal{P}))+\Tr(\Fr|H_{c}^{2}(\overline{\mathbb{A}}^{1}_{\mathbb{F}_{q}},j_{*}j^{*} R^{n}\tilde{f}_{*}\overline{\mathbb{Q}}_{\ell}))\\& +\sum_{i< n}(-1)^{j}\sum_{j=0}^{2}(-1)^{j}\Tr(\Fr|H_{c}^{j}(\overline{\mathbb{A}}^{1}_{\mathbb{F}_{q}},R^{i}\tilde{f}_{*}\overline{\mathbb{Q}}_{\ell})).
\end{split}
\]
Applying Lemma $\ref{lem : 2}$ and \cite[Theorem $1$, Theorem $2$]{Del80} one gets
\begin{equation}
|\tilde{X}(\mathbb{F}_{q})|=\frac{q\alpha_{n}(1+(-1)^{n})}{2}+\sum_{i= n+1,i\text{ even }}^{2n-2}q\alpha_{i}+ O_{d,n}(q^{\frac{n+1}{2}}),
\label{eq : cost1}
\end{equation}
where for any $i\geq n$ in the above sum, $|q\alpha_{i}|= q^{\frac{i}{2}+1}$. On the other hand, we can compute $|\tilde{X}(\mathbb{F}_{q})|$ using the decomposition $\tilde{X}=V\sqcup \tilde{Z}$. Indeed, we have that $|V(\mathbb{F}_{q})|=q^{n}+O_{d,n}(q^{\frac{n}{2}})$ thanks to Proposition $\ref{prop : coundel}$ and that
\[
|\tilde{Z}(\mathbb{F}_{q})|=q|Z(\mathbb{F}_{q})|=\frac{q^{\frac{n}{2}+1}(1+(-1)^{n})}{2}+ \sum_{b=n+1, b \text{ even }}^{b=2n-2}q^{\frac{b}{2}}+O_{d,n}(q^{\frac{n+1}{2}}).
\]
thanks to \cite{Del74}[Theorem $8.1$]. Thus, we get
\begin{equation}
|\tilde{X}(\mathbb{F}_{q})|=\frac{q^{\frac{n}{2}+1}(1+(-1)^{n})}{2}+\sum_{b=n+3, b \text{ even }}^{b=2n}q^{\frac{b}{2}}+O_{d,n}(q^{\frac{n+1}{2}}).
\label{eq : cost2}
\end{equation}
Comparing the right hand side of ($\ref{eq : cost1}$) with the one of ($\ref{eq : cost2}$) (replacing $\mathbb{F}_{q}$ by a suitable extension $\mathbb{F}_{q^{\nu}}$ if necessary) we obtain that $q\alpha_{i}= q^{\frac{i}{2}+1}$. Hence, $\alpha_{i}=q^{\frac{i}{2}}$ for any $i\geq n$ as we want.
\end{proof}
\begin{cor}
For any $i\geq n+1$, one has that
\[
\sum_{k=0}^{2}(-1)^{k}\Tr (\Fr |H_{c}^{k}(\overline{\mathbb{A}}^{1}_{\mathbb{F}_{q}},\mathcal{F}\otimes R^{i}\tilde{f}_{*}\overline{\mathbb{Q}}_{\ell}))=
\begin{cases}
q^{\frac{i}{2}}\sum_{x\in\mathbb{F}_{q}}t_{\mathcal{F}}(x),    &\text{if $i$ is even}\\
0 &\text{if $i$ odd}.
\end{cases}
\]
Moreover
\[
\sum_{j=0}^{2}(-1)^{k}\Tr (\Fr |H_{c}^{j}(\overline{\mathbb{A}}^{1}_{\mathbb{F}_{q}},\mathcal{F}\otimes R^{n}\tilde{f}_{*}\overline{\mathbb{Q}}_{\ell}))=\frac{q^{\frac{n}{2}}(1+(-1)^{n})}{2}\sum_{x\in\mathbb{F}_{q}}t_{\mathcal{F}}(x)+O_{e,d,n,c(\mathcal{F})}(q^{\frac{n}{2}}).
\]
\label{cor : frobac}
\end{cor}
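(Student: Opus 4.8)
The plan is to reduce everything to the Grothendieck--Lefschetz trace formula for $\mathcal{F}$ itself, exploiting that for $i\geq n+1$ the sheaf $\mathcal{R}_{i}:=R^{i}\tilde{f}_{*}\overline{\mathbb{Q}}_{\ell}$ is, up to a Tate twist, pulled back from $\operatorname{Spec}\mathbb{F}_{q}$. By $(\ref{eq : rank1})$ and Proposition $\ref{prop : 1}.(iii)$, for $i\geq n+1$ the sheaf $\mathcal{R}_{i}$ is lisse on $\overline{\mathbb{A}}^{1}_{\mathbb{F}_{q}}$ with generic stalk $H_{c}^{i}(\tilde{X}_{\lambda},\overline{\mathbb{Q}}_{\ell})$; this vanishes when $i$ is odd, so then $\mathcal{R}_{i}=0$ and there is nothing to prove, while for $i$ even it is one--dimensional, so by Corollary $\ref{cor : geomconst}$ we may write $\mathcal{R}_{i}$ as the pullback to $\overline{\mathbb{A}}^{1}_{\mathbb{F}_{q}}$ of a one--dimensional $\Gal(\overline{\mathbb{F}}_{q}/\mathbb{F}_{q})$--representation $V_{i}$, and the preceding Lemma gives $\Tr(\Fr\mid V_{i})=q^{i/2}$. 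Then $\mathcal{F}\otimes\mathcal{R}_{i}$ is $\mathcal{F}$ tensored with a geometrically constant sheaf, so by the projection formula $H_{c}^{k}(\overline{\mathbb{A}}^{1}_{\mathbb{F}_{q}},\mathcal{F}\otimes\mathcal{R}_{i})\cong H_{c}^{k}(\overline{\mathbb{A}}^{1}_{\mathbb{F}_{q}},\mathcal{F})\otimes V_{i}$ as Galois modules; taking the alternating sum of Frobenius traces and invoking the trace formula for $\mathcal{F}$ gives
\begin{align*}
\sum_{k=0}^{2}(-1)^{k}\Tr(\Fr\mid H_{c}^{k}(\overline{\mathbb{A}}^{1}_{\mathbb{F}_{q}},\mathcal{F}\otimes\mathcal{R}_{i})) &=q^{i/2}\sum_{k}(-1)^{k}\Tr(\Fr\mid H_{c}^{k}(\overline{\mathbb{A}}^{1}_{\mathbb{F}_{q}},\mathcal{F}))\\ &=q^{i/2}\sum_{x\in\mathbb{F}_{q}}t_{\mathcal{F}}(x),
\end{align*}
which is the first assertion.

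For $i=n$ the sheaf $\mathcal{R}_{n}$ is no longer geometrically constant, so I would start instead from the short exact sequence $(\ref{eq : exact})$, namely $0\to\mathcal{P}\to\mathcal{R}_{n}\to j_{*}j^{*}\mathcal{R}_{n}\to 0$, with $\mathcal{P}$ punctual and $j_{*}j^{*}\mathcal{R}_{n}$ geometrically constant. Since $\overline{\mathbb{Q}}_{\ell}$ is a field the functor $\mathcal{F}\otimes(-)$ is exact, so
\[
0\longrightarrow\mathcal{F}\otimes\mathcal{P}\longrightarrow\mathcal{F}\otimes\mathcal{R}_{n}\longrightarrow\mathcal{F}\otimes j_{*}j^{*}\mathcal{R}_{n}\longrightarrow 0
\]
is exact. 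Passing to the long exact sequence in cohomology with compact support, using that $\mathcal{F}\otimes\mathcal{P}$ is punctual (hence has cohomology only in degree $0$) and that the connecting maps are Frobenius--equivariant, we obtain
\begin{multline*}
\sum_{j=0}^{2}(-1)^{j}\Tr(\Fr\mid H_{c}^{j}(\overline{\mathbb{A}}^{1}_{\mathbb{F}_{q}},\mathcal{F}\otimes\mathcal{R}_{n}))\\
=\Tr(\Fr\mid H_{c}^{0}(\overline{\mathbb{A}}^{1}_{\mathbb{F}_{q}},\mathcal{F}\otimes\mathcal{P}))+\sum_{j=0}^{2}(-1)^{j}\Tr(\Fr\mid H_{c}^{j}(\overline{\mathbb{A}}^{1}_{\mathbb{F}_{q}},\mathcal{F}\otimes j_{*}j^{*}\mathcal{R}_{n})).
\end{multline*}
The last sum is handled exactly as in the first paragraph: $j_{*}j^{*}\mathcal{R}_{n}$ is geometrically constant with constant trace function $\tfrac{1}{2}q^{n/2}(1+(-1)^{n})$ by the preceding Lemma, so it contributes $\tfrac{1}{2}q^{n/2}(1+(-1)^{n})\sum_{x\in\mathbb{F}_{q}}t_{\mathcal{F}}(x)$.

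It remains to absorb the stray term, i.e. to show $\Tr(\Fr\mid H_{c}^{0}(\overline{\mathbb{A}}^{1}_{\mathbb{F}_{q}},\mathcal{F}\otimes\mathcal{P}))=O_{d,e,n,c(\mathcal{F})}(q^{n/2})$, and this weight--plus--dimension estimate is the only quantitative point and the one I expect to be the main (if modest) obstacle. For the dimension, the long exact sequence embeds $H_{c}^{0}(\overline{\mathbb{A}}^{1}_{\mathbb{F}_{q}},\mathcal{F}\otimes\mathcal{P})$ into $H_{c}^{0}(\overline{\mathbb{A}}^{1}_{\mathbb{F}_{q}},\mathcal{F}\otimes\mathcal{R}_{n})$, whose dimension is at most $c(\mathcal{F}\otimes\mathcal{R}_{n})\ll_{d,e,n,c(\mathcal{F})}1$ by Proposition $\ref{lem : 2}$ and the definition of the conductor. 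For the size, $\tilde{f}$ is proper, so $\mathcal{R}_{n}$ is mixed of weight $\leq n$ by Weil~II; hence so is its subsheaf $\mathcal{P}$, and since $\mathcal{F}$ has weight $0$, so is $\mathcal{F}\otimes\mathcal{P}$, whence every Frobenius eigenvalue on $H_{c}^{0}(\overline{\mathbb{A}}^{1}_{\mathbb{F}_{q}},\mathcal{F}\otimes\mathcal{P})$ has absolute value $\leq q^{n/2}$. Multiplying the two bounds gives the desired estimate and completes the proof; everything else is formal bookkeeping within the Leray/projection-formula formalism already assembled above.
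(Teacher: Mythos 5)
Your argument is correct, and for the range $i\geq n+1$ it is the paper's argument in only slightly different clothing: the paper multiplies trace functions, using that $\mathcal{F}\otimes R^{i}\tilde{f}_{*}\overline{\mathbb{Q}}_{\ell}$ is geometrically $\mathcal{F}$ twisted by the constant $\alpha_{i}=q^{i/2}$, and then invokes the Lefschetz trace formula; your projection-formula phrasing is the same computation carried out upstairs in cohomology. For $i=n$ the presentations genuinely diverge, though not the substance. The paper applies the trace formula first, obtaining $\sum_{x\in\mathbb{F}_{q}}t_{\mathcal{F}\otimes R^{n}\tilde{f}_{*}\overline{\mathbb{Q}}_{\ell}}(x)$, splits the point-sum at $\sing(R^{n}\tilde{f}_{*}\overline{\mathbb{Q}}_{\ell})$, bounds the singular contribution by (number of singular points)$\times$(stalk dimension)$\times q^{\frac{n}{2}}$ via Lemma~\ref{lem : sing} and Deligne's weight bounds, and replaces $t_{R^{n}\tilde{f}_{*}\overline{\mathbb{Q}}_{\ell}}$ by the constant $t_{j_{*}j^{*}R^{n}\tilde{f}_{*}\overline{\mathbb{Q}}_{\ell}}$ off the singular locus. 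You instead tensor the exact sequence $(\ref{eq : exact})$ with $\mathcal{F}$, pass to the long exact sequence, and isolate $\Tr(\Fr\mid H_{c}^{0}(\overline{\mathbb{A}}^{1}_{\mathbb{F}_{q}},\mathcal{F}\otimes\mathcal{P}))$, which you control by exactly the same two ingredients: dimension $\ll 1$ (via the injection into $H_{c}^{0}(\overline{\mathbb{A}}^{1}_{\mathbb{F}_{q}},\mathcal{F}\otimes R^{n}\tilde{f}_{*}\overline{\mathbb{Q}}_{\ell})$ and the conductor bound of Proposition~\ref{lem : 2}) and eigenvalues of modulus $\leq q^{\frac{n}{2}}$ (Weil II for the proper $\tilde{f}$, plus the weight-$\leq 0$ property of the stalks of the middle extension $\mathcal{F}$). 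The two versions are exchanged by one application of the trace formula to the punctual sheaf, so no new input is required; yours has the small merit of making explicit where the weight bound on $\mathcal{P}$ comes from. One caveat you share with the paper: for even $i\geq 2n-1$ the sheaf $R^{i}\tilde{f}_{*}\overline{\mathbb{Q}}_{\ell}$ vanishes outright (the fibres have dimension $n-1$), so the generic stalk is zero- rather than one-dimensional and the displayed identity should be read only for $n+1\leq i\leq 2n-2$ --- which is all that is used in the end of the proof of Theorem~\ref{thm : 1}.
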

\begin{proof}
Applying the Lefschetz trace formula one gets
\[
\sum_{k=0}^{2}(-1)^{k}\Tr (\Fr |H_{c}^{k}(\overline{\mathbb{A}}^{1}_{\mathbb{F}_{q}},\mathcal{F}\otimes R^{i}\tilde{f}_{*}\overline{\mathbb{Q}}_{\ell}))=\sum_{x\in \mathbb{F}_{q}}t_{\mathcal{F}\otimes R^{i}\tilde{f}_{*}\overline{\mathbb{Q}}_{\ell}}(x).
\]
For $i$ odd there is nothing to prove since $H_{c}^{k}(\overline{\mathbb{A}}^{1}_{\mathbb{F}_{q}},\mathcal{F}\otimes R^{i}\tilde{f}_{*}\overline{\mathbb{Q}}_{\ell})=0$ for $k=0,1,2$. Otherwise $\mathcal{F}\otimes R^{i}\tilde{f}_{*}\overline{\mathbb{Q}}_{\ell}\cong_{\geom}\mathcal{F}$ and this implies that $t_{\mathcal{F}\otimes R^{i}\tilde{f}_{*}\overline{\mathbb{Q}}_{\ell}}(x)=\alpha_{i}t_{\mathcal{F}}(x)$ for any $x\in\mathbb{F}_{q}$. On the other hand, we have shown in the previous Lemma that $\alpha_{i}=q^{\frac{i}{2}}$ and the result follows. For the second part of the argument one starts writing
\[
\sum_{x\in \mathbb{F}_{q}}t_{\mathcal{F}\otimes R^{n}\tilde{f}_{*}\overline{\mathbb{Q}}_{\ell}}(x)=\sum_{x\in \mathbb{F}_{q}\setminus\sing (R^{n}\tilde{f}_{*}\overline{\mathbb{Q}}_{\ell})(\mathbb{F}_{q})}t_{\mathcal{F}\otimes R^{i}\tilde{f}_{*}\overline{\mathbb{Q}}_{\ell}}(x)+\sum_{x\in\sing (R^{n}\tilde{f}_{*}\overline{\mathbb{Q}}_{\ell})(\mathbb{F}_{q})}t_{\mathcal{F}\otimes R^{i}\tilde{f}_{*}\overline{\mathbb{Q}}_{\ell}}(x).
\]
On the other hand, thanks to Lemma $\ref{lem : sing}$ together with \cite{Del80}[Theorem $1$] one has that the second second sum in the equation above is $\ll_{d,e,n}q^{\frac{n}{2}}$ . To conclude the proof it is enough to observe that $t_{\mathcal{F}\otimes R^{n}\tilde{f}_{*}\overline{\mathbb{Q}}_{\ell}}=t_{\mathcal{F}\otimes j_{*}j^{*}R^{n}\tilde{f}_{*}\overline{\mathbb{Q}}_{\ell}}$ on $\mathbb{F}_{q}\setminus\sing (R^{n}\tilde{f}_{*}\overline{\mathbb{Q}}_{\ell}(\mathbb{F}_{q}))$ and apply the same argument as above to $t_{\mathcal{F}\otimes j_{*}j^{*}R^{n}\tilde{f}_{*}\overline{\mathbb{Q}}_{\ell}}$.
\end{proof}
\begin{lem}
One has
\[
\sum_{i=0}^{2(n-1)}(-1)^{i}\Tr (\Fr|H_{c}^{i}(\overline{\tilde{Z}}, \tilde{f}^{*}\mathcal{F}_{|Z}))=\Big(\sum_{x\in\mathbb{F}_{q}}t_{\mathcal{F}}(x)\Big)\Big(\sum_{i=n, i\text{ even}}^{i=2n-4}q^{\frac{i}{2}}\Big)+O_{d,e,n}(q^{\frac{n}{2}}).
\]
\label{lem : 5}
\end{lem}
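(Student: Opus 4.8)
The plan is to exploit the product structure $\tilde{Z}=Z\times\overline{\mathbb{A}}_{\mathbb{F}_{q}}^{1}$ established in Lemma \ref{lem : encaps} together with the fact that, on $\tilde{Z}$, the morphism $\tilde{f}$ is nothing but the projection onto the second factor. Hence $\tilde{f}^{*}\mathcal{F}_{|Z}$ is the pullback of $\mathcal{F}$ along this projection, so its trace function at a point $(z,\lambda)\in Z(\mathbb{F}_{q})\times\mathbb{F}_{q}$ is simply $t_{\mathcal{F}}(\lambda)$. Since $Z$ has dimension $n-2$, the variety $\tilde{Z}$ has dimension $n-1$, which is why the cohomology is supported in degrees $0\leq i\leq 2(n-1)$. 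First I would apply the Grothendieck--Lefschetz trace formula directly on $\tilde{Z}$:
\[
\sum_{i=0}^{2(n-1)}(-1)^{i}\Tr(\Fr|H_{c}^{i}(\overline{\tilde{Z}},\tilde{f}^{*}\mathcal{F}_{|Z}))=\sum_{(z,\lambda)\in\tilde{Z}(\mathbb{F}_{q})}t_{\mathcal{F}}(\lambda)=|Z(\mathbb{F}_{q})|\sum_{\lambda\in\mathbb{F}_{q}}t_{\mathcal{F}}(\lambda).
\]
One could instead arrive at the same identity through the K\"unneth formula, exactly as in the proof of Lemma \ref{lem : coh1}, but the direct computation is shorter.

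Next I would estimate the two factors separately. Since $Z=V(G)\cap V(F)\cap V(X_{0})$ is, by the hypotheses of Theorem \ref{thm : 1}, a smooth complete intersection of dimension $n-2$ and multidegree $(e,d,1)$ in $\overline{\mathbb{P}}_{\mathbb{F}_{q}}^{n+1}$, Deligne's description of the cohomology of smooth complete intersections together with Bombieri's bound on the middle Betti number (used exactly as in the proof of Proposition \ref{prop : coundel}) gives
\[
|Z(\mathbb{F}_{q})|=\sum_{\substack{i=0\\i\text{ even}}}^{2n-4}q^{\frac{i}{2}}+O_{d,e,n}(q^{\frac{n-2}{2}}).
\]
For the second factor I would use the standing assumption of Theorem \ref{thm : 1} that no geometrically irreducible component of $\mathcal{F}$ is geometrically trivial, i.e. $H_{c}^{2}(\overline{\mathbb{A}}_{\mathbb{F}_{q}}^{1},\mathcal{F})=0$; combined with $H_{c}^{0}(\overline{\mathbb{A}}_{\mathbb{F}_{q}}^{1},\mathcal{F})=0$ (as $\mathcal{F}$ is a middle-extension sheaf), with the weight bound on $H_{c}^{1}$, and with $\dim H_{c}^{1}(\overline{\mathbb{A}}_{\mathbb{F}_{q}}^{1},\mathcal{F})\ll c(\mathcal{F})$ coming from the Grothendieck--Ogg--Shafarevich formula, this yields
\[
\Big|\sum_{\lambda\in\mathbb{F}_{q}}t_{\mathcal{F}}(\lambda)\Big|=\big|\Tr(\Fr|H_{c}^{1}(\overline{\mathbb{A}}_{\mathbb{F}_{q}}^{1},\mathcal{F}))\big|\ll_{c(\mathcal{F})}q^{\frac{1}{2}}.
\]

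Finally I would multiply the two estimates. In the product $|Z(\mathbb{F}_{q})|\sum_{\lambda}t_{\mathcal{F}}(\lambda)$, the terms $q^{i/2}$ with $i<n$ contribute at most $O_{n}(q^{(n-1)/2})$, which after multiplication by the bound $O(q^{1/2})$ for $\sum_{\lambda}t_{\mathcal{F}}(\lambda)$ is absorbed into $O(q^{n/2})$; likewise the error term $O_{d,e,n}(q^{(n-2)/2})\cdot O(q^{1/2})=O(q^{(n-1)/2})$ is absorbed. What remains is precisely
\[
\Big(\sum_{\substack{i=n\\i\text{ even}}}^{2n-4}q^{\frac{i}{2}}\Big)\sum_{\lambda\in\mathbb{F}_{q}}t_{\mathcal{F}}(\lambda)+O_{d,e,n,c(\mathcal{F})}(q^{\frac{n}{2}}),
\]
which is the asserted identity. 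There is no real obstacle here; the only point that requires attention is the bookkeeping in this last step, where it is essential that $\sum_{\lambda}t_{\mathcal{F}}(\lambda)\ll q^{1/2}$ — a bound which uses in a crucial way that $\mathcal{F}$ has no geometrically trivial component — since this is exactly what allows both the low-degree terms of $|Z(\mathbb{F}_{q})|$ and the Deligne--Bombieri error to be swallowed by $O(q^{n/2})$.
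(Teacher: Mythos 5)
Your proof is correct and follows essentially the paper's approach: both exploit the product structure $\tilde{Z}=Z\times\overline{\mathbb{A}}_{\mathbb{F}_{q}}^{1}$, Deligne's description of the cohomology of the smooth complete intersection $Z$, and the vanishing $H_{c}^{2}(\overline{\mathbb{A}}_{\mathbb{F}_{q}}^{1},\mathcal{F})=0$ to get $\sum_{\lambda}t_{\mathcal{F}}(\lambda)\ll q^{1/2}$. The only cosmetic difference is that you apply the Lefschetz trace formula directly on $\tilde{Z}$ and factor the resulting point-count sum, whereas the paper first splits the cohomology via the K\"unneth formula before summing; the two computations are equivalent.
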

\begin{proof}
The action of the Frobenius on the cohomology groups $H_{c}^{i}(\overline{\tilde{Z}}, \tilde{f}^{*}\mathcal{F}_{|Z})$ can be calculated by observing that $\tilde{Z}=Z\times\overline{\mathbb{A}}_{\mathbb{F}_{q}}^{1}$. Indeed, using the K\"unneth formula (see \cite[VI.8]{Mil80}) one gets
\[
H_{c}^{i}(\overline{\tilde{Z}}, \tilde{f}^{*}\mathcal{F}_{|Z})=\bigoplus_{b=0}^{1}H_{c}^{i-b}(\overline{Z}, \overline{\mathbb{Q}}_{\ell})\otimes H_{c}^{b}(\overline{\mathbb{A}}^{1}_{\mathbb{F}_{q}},\mathcal{F}).
\]
Combining this with the functoriality of the Frobenius one obtains
\[
\Tr(\Fr|H_{c}^{i}(\overline{Z}, \tilde{f}^{*}\mathcal{F}_{|Z}))=\sum_{b=0}^{1}\Tr(\Fr|H_{c}^{i-b}(\overline{Z}, \overline{\mathbb{Q}}_{\ell}))\Tr(\Fr|H_{c}^{b}(\overline{\mathbb{A}}^{1}_{\mathbb{F}_{q}},\mathcal{F})).
\]
Thus,
\[
\begin{split}
\sum_{i=0}^{2(n-1)}(-1)^{i}\Tr (\Fr|H_{c}^{i}(\overline{\tilde{Z}}, \tilde{f}^{*}\mathcal{F}_{|Z}))&=\sum_{i=0}^{2(n-1)}(-1)^{i}\sum_{b=0}^{1}\Tr(\Fr|H_{c}^{i-b}(\overline{Z}, \overline{\mathbb{Q}}_{\ell}))\Tr(\Fr|H_{c}^{b}(\overline{\mathbb{A}}^{1}_{\mathbb{F}_{q}},\mathcal{F}))\\&=\sum_{i=0}^{2(n-1)}(-1)^{i}\Tr(\Fr|H_{c}^{i}(\overline{Z}, \overline{\mathbb{Q}}_{\ell}))\sum_{b=0}^{1}(-1)^{b}\Tr(\Fr|H_{c}^{b}(\overline{\mathbb{A}}^{1}_{\mathbb{F}_{q}},\mathcal{F})).
\end{split}
\]
On the other hand $\sum_{x\in\mathbb{F}_{q}}t_{\mathcal{F}}(x)=\sum_{b=0}^{1}(-1)^{b}\Tr(\Fr|H_{c}^{b}(\overline{\mathbb{A}}^{1}_{\mathbb{F}_{q}},\mathcal{F}))$ (we are assuming that $H_{c}^{2}(\overline{\mathbb{A}}^{1}_{\mathbb{F}_{q}},\mathcal{F})=0$), and
\[
\Tr(\Fr|H_{c}^{i}(\overline{Z}, \overline{\mathbb{Q}}_{\ell})) =
\begin{cases}
 0 & \text{if either $0\leq i \leq 2n-4$, $2\nmid i$ and $i\neq n-1$, or $i>2n-4$} \\
 q^{i} & \text{if $0\leq i \leq 2n-4$, $2 | i$ and $i\neq n-1$,}
\end{cases}
\]
thanks to \cite{Del74}[Theorem $8.1$]. Moreover $|\Tr(\Fr|H_{c}^{n-2}(\overline{Z}, \overline{\mathbb{Q}}_{\ell}))|\ll_{n,e,d} q^{\frac{n-2}{2}}$ again thanks to \cite{Del74}[Theorem $8.1$].
\end{proof}
\subsubsection*{End of the proof of Theorem $\ref{thm : 1}$}
We are finally ready to prove Theorem $\ref{thm : 1}$. First of all, observe that
\[
\sum_{x\in\tilde{X}(\mathbb{F}_{q})}t_{\tilde{f}^{*}\mathcal{F}}(x)=\sum_{x\in V(\mathbb{F}_{q})}t_{f^{*}\mathcal{F}}(x)+\sum_{x\in Z(\mathbb{F}_{q})}t_{\tilde{f}^{*}\mathcal{F}_{|Z}}(x).
\]
Arguing as \cite[Lemma $11$]{Kat99}, one has
\[
\begin{split}
\sum_{x\in\tilde{X}(\mathbb{F}_{q})}t_{\tilde{f}^{*}\mathcal{F}}(x)&=\sum_{i,j}(-1)^{i+j}\Tr(\Fr |H_{c}^{j}(\overline{\mathbb{A}}^{1}_{\mathbb{F}_{q}},\mathcal{F}\otimes R^{i}\tilde{f}_{*}\overline{\mathbb{Q}}_{\ell}))\\&=\sum_{i}(-1)^{i}\sum_{j=0}^{2}(-1)^{j}\Tr(\Fr |H_{c}^{j}(\overline{\mathbb{A}}^{1}_{\mathbb{F}_{q}},\mathcal{F}\otimes R^{i}\tilde{f}_{*}\overline{\mathbb{Q}}_{\ell})).
\end{split}
\]
If $i> n$ and $i$ is even, then we apply Corollary $\ref{cor : frobac}$ getting
\[
\sum_{j=0}^{2}(-1)^{j}\Tr(\Fr |H_{c}^{j}(\overline{\mathbb{A}}^{1}_{\mathbb{F}_{q}},\mathcal{F}\otimes R^{i}\tilde{f}_{*}\overline{\mathbb{Q}}_{\ell}))=q^{\frac{i}{2}}\sum_{x\in\mathbb{F}_{q}}t_{\mathcal{F}}(x).
\]
Similarly, for $i=n$ one gets
\[
\sum_{j=0}^{2}(-1)^{j}\Tr(\Fr |H_{c}^{j}(\overline{\mathbb{A}}^{1}_{\mathbb{F}_{q}},\mathcal{F}\otimes R^{n}\tilde{f}_{*}\overline{\mathbb{Q}}_{\ell}))=\frac{q^{\frac{i}{2}}(1+(-1)^{n})}{2}\sum_{x\in\mathbb{F}_{q}}t_{\mathcal{F}}(x)+O_{d,e,c(\mathcal{F})}(q^{\frac{n}{2}}).
\]
Otherwise, the Riemann Hypothesis over finite fields implies that
\[
\sum_{j=0}^{2}(-1)^{j}\Tr(\Fr |H_{c}^{j}(\overline{\mathbb{A}}^{1}_{\mathbb{F}_{q}},\mathcal{F}\otimes R^{i}\tilde{f}_{*}\overline{\mathbb{Q}}_{\ell}))=\sum_{x\in\mathbb{F}_{q}}t_{\mathcal{F}\otimes R^{i}\tilde{f}_{*}\overline{\mathbb{Q}}_{\ell}}(x)\leq c(\mathcal{F}\otimes R^{i}\tilde{f}_{*}\overline{\mathbb{Q}}_{\ell}))q^{\frac{i+1}{2}}\ll_{d,e,n,c(\mathcal{F})}q^{\frac{i+1}{2}},
\]
where in the last step we used Lemma $\ref{lem : cohom2}$ and Lemma $\ref{lem : 2}$. Thus
\begin{equation}
\sum_{x\in\tilde{X}(\mathbb{F}_{q})}t_{\tilde{f}^{*}\mathcal{F}}(x)=\Big(\sum_{x\in\mathbb{F}_{q}}t_{\mathcal{F}}(x)\Big)\Big(\sum_{i=n, i\text{ even}}^{i=2n-2}q^{\frac{i}{2}}\Big)+O_{d,n,c(\mathcal{F})}(q^{\frac{n}{2}}).
\label{eq : trace1}
\end{equation}
On the other hand, combining the Lefschetz trace formula for $\tilde{f}^{*}\mathcal{F}_{|{Z}}$ on $Z$ with Lemma $\ref{lem : 5}$ we get
\begin{equation}
\sum_{x\in Z(\mathbb{F}_{q})}t_{\tilde{f}^{*}\mathcal{F}_{|Z}}(x)=\Big(\sum_{x\in\mathbb{F}_{q}}t_{\mathcal{F}}(x)\Big)\Big(\sum_{i=n, i\text{ even}}^{i=2n-4}q^{\frac{i}{2}}\Big)+O_{d,n,c(\mathcal{F})}(q^{\frac{n}{2}}),
\label{eq : trace2}
\end{equation}
subtracting ($\ref{eq : trace2}$) to $(\ref{eq : trace1})$ we get:
\[
\sum_{x\in V(\mathbb{F}_{q})}t_{f^{*}\mathcal{F}}(x)=q^{n-1}\sum_{x\in\mathbb{F}_{q}}t_{\mathcal{F}}(x)+O_{d,n,c(\mathcal{F})}(q^{\frac{n}{2}}),
\]
as we want. 

\subsection*{Proof of Theorem $\ref{cor : mix}$}
\label{subsec : mix}
We start writing
\begin{equation}
\sum_{\mathbf{x}\in\mathbb{F}_{q}^{n}}t_{\mathcal{F}}(F(\mathbf{x}))\psi(G(\mathbf{x}))=\sum_{(a,b)\in\mathbb{F}_{q}^{2}}t_{\mathcal{F}}(a)\psi(b)N(a,b,F,G),
\label{eq : starting}
\end{equation}
where for any $a,b\in\mathbb{F}_{q}$, $N(a,b,F,G):=|\{\mathbf{x}\in\mathbb{F}_{q}^{n}:F(\mathbf{x})=a\text{ and }G(\mathbf{x})=b\}|.$
At this point it is useful to
\begin{itemize}
\item[$(i)$] recall that for any $a,b\in\mathbb{F}_{q}$ one has that $N(a,b,F,G)=q^{n-2}+O_{d,e,n}(q^{\frac{n-2}{2}})$ if $V(F-a)\cap V(G-b)$ (Proposition $\ref{prop : coundel}$) is a smooth variety, and $N(a,b,F,G)=q^{n-2}+O_{d,e,n}(q^{\frac{n-1}{2}})$ if $V(F-a)\cap V(G-b)$ is singular (the proof of this is similar to the one of Proposition $\ref{prop : coundel}$),
\item[$(ii)$] observe that if $a\neq 0$
\begin{equation}
\sum_{b\in\mathbb{F}_{q}}N(a,b,F,G)=N(a,F)=q^{n-1}+O_{d,n}(q^{\frac{n-1}{2}})
\label{eq : sumdel}
\end{equation}
where $N(a,F):=|\{\mathbf{x}\in\mathbb{F}_{q}^{n}:F(\mathbf{x})=a\}|$.
\end{itemize}
Since $F,G$ are homogeneous, for $a,b\in\mathbb{F}_{q}$ and $\eta\in\mathbb{F}_{b}^{\times}$ we have $N(a,b,F,G)=N(\eta^{d}a,\eta^{e} b,F,G)$: this can be proven by using the transformation $(x_{i})\mapsto (\eta x_{i})$. On the other hand, the morphism
\[
\begin{matrix}
\varphi:\text{ } \mathbb{F}_{q}^{\times}/\mathbb{F}_{q}^{\times d}\times\mathbb{F}_{q}^{\times}\times\mathbb{F}_{q}^{\times} & \longrightarrow & \mathbb{F}_{q}^{\times}\times\mathbb{F}_{q}^{\times}\\(\alpha,\eta, b) & \longmapsto & (\eta^{d}\alpha,\eta^{e}b)
\end{matrix}
\]
is a surjection onto $\mathbb{F}_{q}^{\times}\times\mathbb{F}_{q}^{\times}$ with $|\text{ker}(\varphi)|=(d,q-1)$. Thus we can rewrite $(\ref{eq : starting})$ as
\begin{equation}
\begin{split}
\sum_{\mathbf{x}\in\mathbb{F}_{q}^{n}}t_{\mathcal{F}}(F(\mathbf{x}))\psi(G(\mathbf{x}))&=\sum_{(a,b)\in\mathbb{F}_{q}^{2}}t_{\mathcal{F}}(a)\psi(b)N(a,b,F,G)\\&=\sum_{a\in\mathbb{F}_{q}}t_{\mathcal{F}}(a)N(a,0,F,G)+\sum_{b\in\mathbb{F}_{q}}t_{\mathcal{F}}(0)\psi (b)N(0,b,F,G)\\&+\frac{1}{(d,q-1)}\sum_{\alpha\in\mathbb{F}_{q}^{\times}/\mathbb{F}_{q}^{\times d}}\sum_{b\in\mathbb{F}_{q}^{\times}}N(\alpha,b,F,G)\sum_{\eta\in\mathbb{F}_{q}^{\times}}t_{\mathcal{F}} (\alpha\eta^{d})\psi(b\eta^{e})\\& -t_{\mathcal{F}}(0)N(0,0,F,G).
\end{split}
\label{eq : id}
\end{equation}
Now we remove the condition $\eta\in\mathbb{F}_{q}^{\times}$ in the last sum. To do so, we observe that 
\[
\begin{split}
\frac{1}{(d,q-1)}\sum_{\alpha\in\mathbb{F}_{q}^{\times}/\mathbb{F}_{q}^{\times d}}\sum_{b\in\mathbb{F}_{q}^{\times}}N(\alpha,b,F,G)t_{\mathcal{F}}(0)&=\frac{1}{(d,q-1)}\sum_{\alpha\in\mathbb{F}_{q}^{\times}/\mathbb{F}_{q}^{\times d}}\sum_{b\in\mathbb{F}_{q}}N(\alpha,b,F,G)t_{\mathcal{F}}(0)\\&-\frac{1}{(d,q-1)}\sum_{\alpha\in\mathbb{F}_{q}^{\times}/\mathbb{F}_{q}^{\times d}}N(\alpha,0,F,G)t_{\mathcal{F}}(0)\\&=\frac{1}{(d,q-1)}\sum_{\alpha\in\mathbb{F}_{q}^{\times}/\mathbb{F}_{q}^{\times d}}N(\alpha,F)t_{\mathcal{F}}(0)\\&-\frac{1}{(d,q-1)}\sum_{\alpha\in\mathbb{F}_{q}^{\times}/\mathbb{F}_{q}^{\times d}}N(\alpha,0,F,G)t_{\mathcal{F}}(0).
\end{split}
\]
where in the last step we used $(\ref{eq : sumdel})$. On the other hand we have
\[
N(\alpha, F)=q^{n-1}+O_{d,e,n}(q^{\frac{n-1}{2}}),\qquad N(\alpha,0,F, G)=q^{n-2}+O_{d,e,n}(q^{\frac{n}{2}-1})
\] 
for any $\alpha\in\mathbb{F}_{q}^{\times}/\mathbb{F}_{q}^{\times d}$. Thus
\[
\frac{1}{(d,q-1)}\sum_{\alpha\in\mathbb{F}_{q}^{\times}/\mathbb{F}_{q}^{\times d}}\sum_{b\in\mathbb{F}_{q}^{\times}}N(\alpha,b,F,G)t_{\mathcal{F}}(0)=q^{n-2}(q-1)t_{\mathcal{F}}(0)+O_{d,e,n,c(\mathcal{F})}(q^{\frac{n}{2}}).
\]
Moreover $N(0,0,F,G)=q^{n-2}+O_{d,e,n}(q^{\frac{n-1}{2}})$ because by hypothesis the affine variety $\{\mathbf{x}\in\mathbb{F}_{q}^{n}:F(\mathbf{x})=0\text{ and }G(\mathbf{x})=0\}$ is singular only at the origin. So we can rewrite $(\ref{eq : id})$ as
\[
\begin{split}
\sum_{\mathbf{x}\in\mathbb{F}_{q}^{n}}t_{\mathcal{F}}(F(\mathbf{x}))\psi(G(\mathbf{x}))&=\sum_{(a,b)\in\mathbb{F}_{q}^{2}}t_{\mathcal{F}}(a)\psi(b)N(a,b,F,G)\\&=\sum_{a\in\mathbb{F}_{q}}t_{\mathcal{F}}(a)N(a,0,F,G)+\sum_{b\in\mathbb{F}_{q}}t_{\mathcal{F}}(0)\psi (b)N(0,b,F,G)\\&+\frac{1}{(d,q-1)}\sum_{\alpha\in\mathbb{F}_{q}^{\times}/\mathbb{F}_{q}^{\times d}}\sum_{b\in\mathbb{F}_{q}^{\times}}N(\alpha,b,F,G)\sum_{\eta\in\mathbb{F}_{q}}t_{\mathcal{F}} (\alpha\eta^{d})\psi(b\eta^{e})\\& +E(q).
\end{split}
\]
where $E(q)=-q^{n-1}t_{\mathcal{F}}(0)+O_{d,e,n,c(\mathcal{F})}(q^{\frac{n}{2}})$. Let us discuss first 
\[
M:=\frac{1}{(d,q-1)}\sum_{\alpha\in\mathbb{F}_{q}^{\times}/\mathbb{F}_{q}^{\times d}}\sum_{b\in\mathbb{F}_{q}^{\times}}N(\alpha,b,F,G)\sum_{\eta\in\mathbb{F}_{q}}t_{\mathcal{F}} (\alpha\eta^{d})\psi(b\eta^{e})
\]
To simplify the notation we will denote $\mathcal{G}_{\alpha,e}:=[\times(-1)]^{*}T_{e}([\times\alpha]^{*}[x\mapsto x^{d}]^{*}\mathcal{F})$ (see Definition $\ref{defn : power}$). Observe that
\[
t_{\mathcal{G}_{\alpha,e}}(b)=\frac{1}{\sqrt{q}}\sum_{\eta\in\mathbb{F}_{q}}t_{\mathcal{F}} (\alpha\eta^{d})\psi(b\eta^{e})
\]
for any $b\in\mathbb{F}_{q}$. Then $M$ become 
\begin{equation}
\begin{split}
M&=\frac{\sqrt{q}}{(d,q-1)}\sum_{\alpha\in\mathbb{F}_{q}^{\times}/\mathbb{F}_{q}^{\times d}}\sum_{b\in\mathbb{F}_{q}^{\times}}N(\alpha,b,F,G)t_{\mathcal{G}_{e,\alpha}}(b)\\&=\frac{\sqrt{q}}{(d,q-1)}\sum_{\alpha\in\mathbb{F}_{q}^{\times}/\mathbb{F}_{q}^{\times d}}\sum_{\mathbf{x} : \substack{F(\mathbf{x})=\alpha\\G(\mathbf{x})\neq 0}}t_{\mathcal{G}_{\alpha,e}}(G(\mathbf{x}))\\&=\frac{\sqrt{q}}{(d,q-1)}\sum_{\alpha\in\mathbb{F}_{q}^{\times}/\mathbb{F}_{q}^{\times d}}\sum_{\mathbf{x} : F(\mathbf{x})=\alpha}t_{\mathcal{G}_{\alpha,e}}(G(\mathbf{x}))\\&-\frac{\sqrt{q}}{(d,q-1)}\sum_{\alpha\in\mathbb{F}_{q}^{\times}/\mathbb{F}_{q}^{\times d}}N(\alpha,0,F,G)t_{\mathcal{G}_{\alpha,e}}(0).
\end{split}
\end{equation}
On the other hand one has that $\mathcal{G}_{\alpha,e}$ is irreducible and not trivial since $\mathcal{G}_{1,e}=[\times(-1)]^{*}T_{e}([x\mapsto x^{d}]^{*}\mathcal{F})$ is so (see Lemma $\ref{lem : irr}$). Moreover $F-\alpha$ and $G$ are polynomials of Deligne type and
\begin{itemize}
\item[$(i)$] $V(F-\alpha X_{0}^{d})$ is a smooth projective variety for $\alpha\neq 0$.
\item[$(ii)$] $V(F-\alpha X_{0}^{d})\cap V(G)\cap V(X_{0})$ is smooth of codimension $2$ in $V(F-\alpha X_{0}^{d})$ by hypothesis. Combining this with \cite[Proposition $7.2$, Chapter $1$]{Har77} one obtains that $V(F-\alpha X_{0}^{d})\cap V(G)$ is of codimension $1$ in $V(F-\alpha X_{0}^{d})$.
\item[$(iii)$] $V(F-\alpha X_{0}^{d})\cap V(G)$ is smooth. Indeed looking at the Jacobian matrix 
\[
\begin{pmatrix}
F-\alpha X_{0}^{d}  & G \\ d\alpha X_{0}^{d-1} & 0 \\ \frac{\partial F}{\partial X_{1}} & \frac{\partial G}{\partial X_{1}}\\ \vdots & \vdots \\ \frac{\partial F}{\partial X_{n+1}} & \frac{\partial G}{\partial X_{n+1}}
\end{pmatrix}
\] 
we conclude that $P$ is a singular point if $P=[1:0:...:0]$ or $P\in V(X_{0})$. Now $[1:0:...:0]\notin V(F-\alpha X_{0}^{d})$ because $\alpha\neq 0$. Also the other case is impossible because $V(F-\alpha X_{0}^{d})\cap V(G)\cap V(X_{0})$ is smooth by $(ii)$.

\end{itemize}
Hence the sheaves $\mathcal{G}_{\alpha,e}$ are geometrically irreducible, not geometrically trivial, and they are either ramified at some $\lambda\in\overline{\mathbb{A}}_{\mathbb{F}_{q}}^{1}$ or wild ramified at $\infty$ (thanks to Remark $\ref{oss : listam}$). Then we can apply Theorem $\ref{thm : 1}$ getting
\[
\begin{split}
\sum_{\mathbf{x} : F(\mathbf{x})=\alpha}t_{\mathcal{G}_{\alpha,e}}(G(\mathbf{x}))&=q^{n-2}\sum_{b\in\mathbb{F}_{q}}t_{\mathcal{G}_{\alpha,e}}(b)+O_{d,e,n,c(\mathcal{F})}(q^{\frac{n-1}{2}})\\&=q^{n-2}\sum_{b\in\mathbb{F}_{q}^{\times}}t_{\mathcal{G}_{\alpha,e}}(b)+q^{n-2}t_{\mathcal{G}_{\alpha,e}}(0)+O_{d,e,n,c(\mathcal{F})}(q^{\frac{n-1}{2}}).
\end{split}
\]
Hence, we get
\[
\begin{split}
M&=\frac{\sqrt{q}\cdot q^{n-2}}{(d,q-1)}\sum_{\alpha\in\mathbb{F}_{q}^{\times}/\mathbb{F}_{q}^{\times d}}\sum_{b\in\mathbb{F}_{q}^{\times}}t_{\mathcal{G}_{\alpha,e}}(b)+O_{d,e,n,c(\mathcal{F})}(q^{\frac{n}{2}})\\&=\frac{ q^{n-2}}{(d,q-1)}\sum_{\alpha\in\mathbb{F}_{q}^{\times}/\mathbb{F}_{q}^{\times d}}\sum_{b\in\mathbb{F}_{q}^{\times}}\sum_{\eta\in\mathbb{F}_{q}}t_{\mathcal{F}} (\alpha\eta^{d})\psi (b\eta^{e})\\&+O_{d,e,n,c(\mathcal{F})}(q^{\frac{n}{2}})\\&=\frac{ q^{n-2}}{(d,q-1)}\sum_{\alpha\in\mathbb{F}_{q}^{\times}/\mathbb{F}_{q}^{\times d}}\sum_{b\in\mathbb{F}_{q}^{\times}}\sum_{\eta\in\mathbb{F}_{q}^{\times}}t_{\mathcal{F}} (\alpha\eta^{d})\psi (b\eta^{e})\\&+\frac{ q^{n-2}}{(d,q-1)}\sum_{\alpha\in\mathbb{F}_{q}^{\times}/\mathbb{F}_{q}^{\times d}}\sum_{b\in\mathbb{F}_{q}^{\times}}t_{\mathcal{F}} (0)+O_{d,e,n,c(\mathcal{F})}(q^{\frac{n}{2}})\\&=q^{n-2}\sum_{a\in\mathbb{F}_{q}^{\times}}\sum_{b\in\mathbb{F}_{q}^{\times}}t_{\mathcal{F}} (a)\psi (b)+(q-1)q^{n-2}t_{\mathcal{F}}(0)\\&+O_{d,e,n,c(\mathcal{F})}(q^{\frac{n}{2}}).
\end{split}
\]
For the first term of $(\ref{eq : id})$ we can argue as follows
\[
\begin{split}
\sum_{a\in\mathbb{F}_{q}}t_{\mathcal{F}}(a)|N(a,0,F,G)|&=\sum_{\mathbf{x}:G(\mathbf{x})=0}t_{\mathcal{F}} (F(\mathbf{x}))\\&= q^{n-2}\sum_{a\in\mathbb{F}_{q}}t_{\mathcal{F}}(a)+O_{d,e,n,c(\mathcal{F})}(q^{\frac{n-1}{2}}),
\end{split}
\]
again using Theorem $\ref{thm : 1}$, and similarly for the second term $\sum_{b\in\mathbb{F}_{q}}t_{\mathcal{F}}(0)\psi (b)N(0,b)$. So $(\ref{eq : id})$ becomes
\[
\begin{split}
\sum_{\mathbf{x}\in\mathbb{F}_{q}^{n}}t_{\mathcal{F}}(F(\mathbf{x}))\psi(G(\mathbf{x}))&=q^{n-2}\sum_{a\in\mathbb{F}_{q}}t_{\mathcal{F}}(a)+q^{n-2}\sum_{b\in\mathbb{F}_{q}}t_{\mathcal{F}}(0)\psi (b)\\&+ q^{n-2}\sum_{a\in\mathbb{F}_{q}^{\times}}\sum_{b\in\mathbb{F}_{q}^{\times}}t_{\mathcal{F}} (a)\psi (b)\\&+(q-1)q^{n-2}t_{\mathcal{F}}(0)+E(q)+O_{d,e,n,c(\mathcal{F})}(q^{\frac{n}{2}})
\end{split}
\]
Using the fact that the term $q^{n-2}t_{\mathcal{F}}(0)$ is counted twice in the left hand side and recalling the definition of $E(q)=-q^{n-1}t_{\mathcal{F}}(0)+O_{d,e,n,c(\mathcal{F})}(q^{\frac{n}{2}})$ we get
\[
\begin{split}
\sum_{\mathbf{x}\in\mathbb{F}_{q}^{n}}t_{\mathcal{F}}(F(\mathbf{x}))\psi(G(\mathbf{x}))&=q^{n-2}\sum_{(a,b)\in\mathbb{F}_{q}^{2}}t_{\mathcal{F}}(a)\psi(b)+O_{d,e,n,c(\mathcal{F})}(q^{\frac{n}{2}})\\&=O_{d,e,n,c(\mathcal{F})}(q^{\frac{n}{2}}),
\end{split}
\]
since $\sum_{b\in\mathbb{F}_{q}}\psi(b)=0$.

\subsubsection{Some examples.}
In the following we will denote $\mathfrak{F}$ the family of geometrically irreducible middle-extension $\ell$-adic sheaf on $\overline{\mathbb{A}}_{\mathbb{F}_{q}}^{1}$ pure of weight $0$.
\begin{lem} 
Let $\mathcal{F}\in\mathfrak{F}$ then:
\begin{itemize}
\item[$(i)$] If $e=1$ and $\mathcal{F}\in\mathfrak{F}$ is a Fourier sheaf, then $T_{1}(\mathcal{F})=\FT_{\psi}(\mathcal{F})\in\mathfrak{F}$.
\item[$(ii)$] If $e>1$ and $[\times\eta]^{*}\mathcal{F}\neq_{\geom}\mathcal{F}$ for every non trivial $e$-root of unity $\eta$, then $T_{e}(\mathcal{F})\in\mathfrak{F}$.
\item[$(iii)$] if $T_{e}(\mathcal{F})\in\mathfrak{F}$, for any $\alpha\neq 0$, $T_{e}([\times\alpha]^{*}\mathcal{F})\in\mathfrak{F}$.
\end{itemize}
\label{lem : irr}
\end{lem}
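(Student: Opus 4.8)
The plan is to reduce parts $(ii)$ and $(iii)$ to part $(i)$, which records the standard fact that the normalised Fourier transform $\FT_\psi$ preserves the class of Fourier sheaves inside $\mathfrak{F}$. For $(i)$ I would appeal to Laumon's theory of the $\ell$-adic Fourier transform (see \cite{Kat88} and the summary in \cite{FKM15}): $\FT_\psi$ is an auto\-equivalence of the category of perverse sheaves on $\overline{\mathbb{A}}^1_{\mathbb{F}_q}$, it commutes with extension of scalars to $\overline{\mathbb{F}}_q$ (hence preserves geometric irreducibility), and with the Tate half-twist built into Definition~\ref{defn : power} it sends pure perverse sheaves of weight $0$ to pure perverse sheaves of weight $0$. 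If $\mathcal{F}\in\mathfrak{F}$ is a Fourier sheaf, then $\mathcal{F}[1]$ is perverse, geometrically irreducible and pure, it is not punctual (it has positive generic rank) and not geometrically isomorphic to $\mathcal{L}_{\psi(ax)}[1]$ for any $a$. Therefore $\FT_\psi(\mathcal{F}[1])$ is again perverse, geometrically irreducible and pure of weight $0$; it is not punctual, since $\FT_\psi\circ\FT_\psi\cong[x\mapsto -x]^*$ would then force $\mathcal{F}[1]$ punctual, and for the same reason it is not an Artin--Schreier shift. Thus $\FT_\psi(\mathcal{F}[1])=\mathcal{G}[1]$ with $\mathcal{G}=T_1(\mathcal{F})$ a Fourier sheaf in $\mathfrak{F}$.

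For $(ii)$, write $T_e(\mathcal{F})=\FT_\psi([e]_*\mathcal{F})$ with $[e]\colon x\mapsto x^e$, and proceed in four steps. First, $[e]$ is finite and flat of degree $e$, tamely ramified at $0$ (since $p\nmid e$) and a connected Galois $\mu_e$-cover over $\mathbb{G}_m$. Second, since $[e]$ is finite it is exact for the perverse $t$-structure and preserves purity, and a local computation at $0$ — identifying the inertia at $0$ upstairs with the unique index-$e$ subgroup of the tame inertia downstairs — shows that $[e]_*$ carries the middle-extension sheaf $\mathcal{F}$ to a middle-extension sheaf pure of weight $0$. Third, for geometric irreducibility one has $[e]^*[e]_*\mathcal{F}\cong\bigoplus_{\eta^e=1}[\times\eta]^*\mathcal{F}$ on a dense open of $\mathbb{G}_m$, so adjunction gives
\[
\mathrm{Hom}_{\geom}\big([e]_*\mathcal{F},[e]_*\mathcal{F}\big)\cong\bigoplus_{\eta^e=1}\mathrm{Hom}_{\geom}\big(\mathcal{F},[\times\eta]^*\mathcal{F}\big),
\]
which is one-dimensional by Schur's lemma together with the hypothesis $[\times\eta]^*\mathcal{F}\not\cong_{\geom}\mathcal{F}$ for $\eta\neq 1$; as $[e]_*\mathcal{F}$ is a middle extension, this forces it to be geometrically irreducible. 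Fourth, $[e]_*\mathcal{F}$ is a Fourier sheaf: a subquotient geometrically isomorphic to some $\mathcal{L}_{\psi(ax)}$ (the case $a=0$ included) would, by adjunction, produce a non-zero geometric morphism between $\mathcal{F}$ and $[e]^*\mathcal{L}_{\psi(ax)}=\mathcal{L}_{\psi(ax^e)}$, hence $\mathcal{F}\cong_{\geom}\mathcal{L}_{\psi(ax^e)}$ (since $\mathcal{F}$ is geometrically irreducible), and then $[\times\eta]^*\mathcal{F}\cong_{\geom}\mathcal{L}_{\psi(a\eta^e x^e)}=\mathcal{L}_{\psi(ax^e)}\cong_{\geom}\mathcal{F}$ for every $\eta\in\mu_e$, contradicting the hypothesis because $e>1$. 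Now $[e]_*\mathcal{F}$ is a Fourier sheaf lying in $\mathfrak{F}$, so part $(i)$ gives $T_e(\mathcal{F})\in\mathfrak{F}$.

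For $(iii)$, the square with vertical maps $[e]$ and horizontal maps $[\times\alpha]$ and $[\times\alpha^e]$ commutes (because $(\alpha x)^e=\alpha^e x^e$) and is Cartesian, so proper base change yields $[e]_*([\times\alpha]^*\mathcal{F})\cong[\times\alpha^e]^*([e]_*\mathcal{F})$; combining this with the standard identity $\FT_\psi([\times\gamma]^*\mathcal{G})\cong[\times\gamma^{-1}]^*\FT_\psi(\mathcal{G})$ (valid up to a harmless Tate twist) gives
\[
T_e\big([\times\alpha]^*\mathcal{F}\big)\cong[\times\alpha^{-e}]^*\,T_e(\mathcal{F})
\]
up to a Tate twist. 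Since $\alpha\in\mathbb{F}_q^\times$, the map $[\times\alpha^{-e}]$ is an $\mathbb{F}_q$-automorphism of $\overline{\mathbb{A}}^1_{\mathbb{F}_q}$, and pullback along it together with a Tate twist preserve geometric irreducibility, the middle-extension property and purity of weight $0$; hence $T_e([\times\alpha]^*\mathcal{F})\in\mathfrak{F}$ whenever $T_e(\mathcal{F})\in\mathfrak{F}$. The main obstacle I anticipate is the verification in the second step of $(ii)$ that $[e]_*\mathcal{F}$ is genuinely a middle-extension \emph{sheaf}, rather than a perverse complex with a punctual summand, at the ramification point $x=0$ — this is where the tameness hypothesis $p\nmid e$ enters; keeping careful track of the Tate-twist and normalisation conventions for $\FT_\psi$ in $(i)$ and $(iii)$ is the only other delicate point, everything else being formal from Laumon's theory and the biadjunction of finite pushforward.
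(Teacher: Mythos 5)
Your proof is correct in outline, but it takes a genuinely different route from the paper's. Both arguments reduce to a criterion for geometric irreducibility, but the criteria are different. The paper uses the \emph{diophantine} criterion of \cite[Lemma 7.0.3]{Kat96}: for $(ii)$ it computes the second moment $\frac{1}{q}\sum_x |t_{T_e(\mathcal{F})}(x)|^2$ directly by opening the Fourier transform, collapses the additive character sum to the diagonal $y_1^e=y_2^e$, and then uses the hypothesis $[\times\eta]^*\mathcal{F}\neq_{\geom}\mathcal{F}$ (together with Deligne's bound for correlations of non-isomorphic irreducible trace functions) to show the moment equals $1+O(q^{-1/2})$; for $(iii)$ it exhibits the elementary identity $t_{T_e([\times\alpha]^*\mathcal{F})}(x)=t_{T_e(\mathcal{F})}(\overline{\alpha}^{\,e}x)$ and concludes the two moments are equal. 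Your version replaces this analytic computation by a structural, purely sheaf-theoretic one: Laumon's perverse Fourier transform for $(i)$, the adjunction formula
\[
\mathrm{Hom}_{\geom}\big([e]_*\mathcal{F},[e]_*\mathcal{F}\big)\cong\bigoplus_{\eta^e=1}\mathrm{Hom}_{\geom}\big(\mathcal{F},[\times\eta]^*\mathcal{F}\big)
\]
plus Schur for $(ii)$, and the base-change isomorphism $T_e([\times\alpha]^*\mathcal{F})\cong[\times\alpha^{-e}]^*T_e(\mathcal{F})$ for $(iii)$. The tradeoff: the paper's argument is shorter and essentially self-contained once one quotes \cite[Lemma 7.0.3]{Kat96}; yours is more conceptual but carries more technical debt — in particular you must justify that $[e]_*\mathcal{F}$ is a middle-extension \emph{sheaf} rather than merely a perverse object, that it remains pure of weight $0$ (so that $\dim\mathrm{End}_{\geom}=1$ actually implies irreducibility via geometric semisimplicity), and that it is a Fourier sheaf, before part $(i)$ can be invoked. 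You flag the first of these yourself, and your step 4 handles the third; step 2 should also record purity explicitly, since that is what converts the $\mathrm{End}$-dimension count into irreducibility. One minor simplification you may make: in $(iii)$ the identity $\FT_\psi([\times\gamma]^*\mathcal{G})\cong[\times\gamma^{-1}]^*\FT_\psi(\mathcal{G})$ holds on the nose with the paper's normalisation (no Tate twist arises for pullback by multiplication by a unit), matching the trace-function identity the paper writes down.
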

\begin{proof}[Proof of Lemma $\ref{lem : irr}$]
One argues as follows:
\begin{itemize}
\item[$(i)$] Is just an application of \cite[Theorem $8.4.1$]{Kat88}. 
\item[$(ii)$] First observe that we may assume that $\mathcal{F}$ is not geometrically trivial otherwise the result is straightforward. Let us start writing
\[
\begin{split}
\frac{1}{q}\sum_{x\in\mathbb{F}_{q}}|t_{T_{e}(\mathcal{F})}(x)|^{2}&=\frac{1}{q^{2}}\sum_{x\in\mathbb{F}_{q}}\Big|\sum_{y\in\mathbb{F}_{q}}t_{\mathcal{F}} (y)\psi(xy^{e})\Big|^{2}\\&=\frac{1}{q^{2}}\sum_{x\in\mathbb{F}_{q}}\sum_{(y_{1},y_{2})\in\mathbb{F}_{q}^{2}}t_{\mathcal{F}} (y_{1})\overline{t_{\mathcal{F}} (y_{2})}\psi(x(y_{1}^{e}-y_{2}^{e}))\\&=\frac{1}{q^{2}}\sum_{(y_{1},y_{2})\in\mathbb{F}_{q}^{2}}t_{\mathcal{F}} (y_{1})\overline{t_{\mathcal{F}} (y_{2})}\sum_{x\in\mathbb{F}_{q}}\psi(x(y_{1}^{e}-y_{2}^{e}))
\end{split}
\]
If $e>1$ and $[\times\eta]^{*}\mathcal{F}\neq_{\geom}\mathcal{F}$ for every non trivial $e$-root of unity $\eta$, we obtain 
\[
\frac{1}{q}\sum_{x\in\mathbb{F}_{q}}|t_{\mathcal{G}}(x)|^{2}=\frac{1}{q}\sum_{\eta^{e}=1}\sum_{y_{1}\in\mathbb{F}_{q}}t_{\mathcal{F}} (y_{1})\overline{t_{\mathcal{F}} (\eta y_{1})}=1+O(q^{1/2}).
\]
Applying \cite[Lemma $7.0.3$]{Kat96} we get the result.
\item[$(iii)$] Observing that 
\[
\begin{split}
t_{T_{e}([\times\alpha]^{*}\mathcal{F})}(x)&=-\frac{1}{\sqrt{q}}\sum_{z\in\mathbb{F}}\psi (z^{e}x)t_{\mathcal{F}}(\alpha z)\\&=-\frac{1}{\sqrt{q}}\sum_{w\in\mathbb{F}}\psi ((\overline{\alpha}w)^{e} x)t_{\mathcal{F}}(w)\\&= t_{T_{e}(\mathcal{F})}(\overline{\alpha}^{e}x)
\end{split}
\]
for any $x\in\mathbb{F}_{q}$, we get
\[
\frac{1}{q}\sum_{x\in\mathbb{F}_{q}}|t_{T_{e}([\times\alpha]^{*}\mathcal{F})}(x)|^{2}=\frac{1}{q}\sum_{x\in\mathbb{F}_{q}}|t_{T_{e}(\mathcal{F})}(x)|^{2}=1 +O(q^{1/2}),
\]
by the hypothesis on $T_{e}(\mathcal{F})$. Thus applying \cite[Lemma $7.0.3$]{Kat96} the result follows.
\end{itemize}
\end{proof}

Then we can prove
\begin{cor}
Let $h\in\mathbb{F}[T]$ be a polynomial and $t$ a trace function appearing in the decomposition of $1_{h(\mathbb{F}_{p})}$ in Proposition $\ref{prop : 1}$. Then
\[
\sum_{\mathbf{x}\in\mathbb{F}_{p}^{n}}t(F(\mathbf{x});p)e\Big(\frac{\langle\mathbf{x},\mathbf{u}\rangle)}{p}\Big)\ll_{d,n}p^{\frac{n}{2}},
\]
for any $F\in\mathbb{F}_{p}[X_{1},...,X_{n}]$  irreducible homogeneous polynomial of degree $d\geq 1$ such that $V(F)\subset \mathbb{P}_{\mathbb{F}_{p}}^{n-1}$ is smooth and for any $\mathbf{u}\in\mathbb{F}_{p}^{n}$ such that $V(\langle\mathbf{x},\mathbf{u}\rangle)$ is not tangent to $V(F)$. 
\label{cor : sieve}
\end{cor}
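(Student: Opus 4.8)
The plan is to deduce the estimate from Theorem \ref{cor : mix}, taking for $G$ the linear form $\langle\mathbf{X},\mathbf{u}\rangle$ (so $e:=\deg G=1$), for $\psi$ the additive character $x\mapsto e(x/p)$, and for $\mathcal{F}$ the middle‑extension sheaf of Proposition \ref{thm : cite} whose trace function is $t$; a few degenerate sheaves will have to be peeled off and dealt with by Katz's earlier estimates. First I would normalise the situation. The non‑tangency hypothesis forces $\mathbf{u}\neq 0$: if $\mathbf{u}=0$ then $V(\langle\mathbf{X},\mathbf{u}\rangle)=\overline{\mathbb{P}}^{n-1}_{\mathbb{F}_p}$ and $V(F)\cap V(\langle\mathbf{X},\mathbf{u}\rangle)=V(F)$ has codimension $1$, not $2$. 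Writing $t=t_{i,p}$ and using that the inner sum is linear in the trace function, I may replace $\mathcal{F}_{i,p}$ by any one of its geometrically irreducible middle‑extension constituents; a geometrically trivial constituent (in particular $t_{1,p}$, coming from the trivial sheaf) contributes $c\sum_{\mathbf{x}\in\mathbb{F}_p^n}e(\langle\mathbf{x},\mathbf{u}\rangle/p)=0$. It therefore remains to bound $\sum_{\mathbf{x}\in\mathbb{F}_p^n}t_{\mathcal{F}}(F(\mathbf{x}))\psi(\langle\mathbf{x},\mathbf{u}\rangle)$ for $\mathcal{F}$ geometrically irreducible, a middle extension, pointwise pure of weight $0$, tame everywhere, not geometrically trivial, with $c(\mathcal{F})$ bounded in terms of $\deg h$ (Proposition \ref{thm : cite}).

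In the main case I would verify the hypotheses of Theorem \ref{cor : mix} with this $\mathcal{F}$, with $d=\deg F$ and $e=1$ (we may assume $p\nmid d$, since the finitely many primes $p\mid d$ are absorbed into the implied constant via the trivial bound $p^n\ll_{d,n}p^{n/2}$). The geometric conditions are immediate: $V(F)$ is smooth by assumption, $V(G)$ is a hyperplane, and $V(F)\cap V(G)$ smooth of codimension $2$ is exactly the non‑tangency. The one substantive point is that $T_e([x\mapsto x^d]^{*}\mathcal{F})=\FT_{\psi}([x\mapsto x^d]^{*}\mathcal{F})$ be geometrically irreducible and not geometrically trivial. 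Since $p\nmid d$ the covering $x\mapsto x^d$ is tame, so $\mathcal{H}:=[x\mapsto x^d]^{*}\mathcal{F}$ (or its middle extension) is tame at every point of $\overline{\mathbb{P}}^1_{\mathbb{F}_p}$; in particular it has no Artin–Schreier subquotient, hence is a Fourier sheaf, pure of weight $0$, with $c(\mathcal{H})\ll_{d,\deg h}1$. Provided $\mathcal{F}$ carries no Kummer twisting symmetry, i.e. $\mathcal{F}\not\cong_{\geom}\mathcal{L}_{\chi}\otimes\mathcal{F}$ and $\mathcal{F}\not\cong_{\geom}\mathcal{L}_{\chi}$ for every non‑trivial $\chi$ of order dividing $d$, Clifford theory for the $\mu_d$‑Galois cover $x\mapsto x^d$ shows $\mathcal{H}\in\mathfrak{F}$, whence Lemma \ref{lem : irr}$(i)$ gives $\FT_{\psi}(\mathcal{H})\in\mathfrak{F}$; and $\FT_{\psi}(\mathcal{H})$ is not geometrically trivial, for otherwise Fourier inversion would force $\mathcal{H}$ to be punctual, which it is not. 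Theorem \ref{cor : mix} then yields $\ll_{d,n,c(\mathcal{F})}p^{n/2}$, and $c(\mathcal{F})\ll_{\deg h}1$, so summing over the $O_{\deg h}(1)$ constituents gives the bound.

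The residual case is when $[x\mapsto x^d]^{*}\mathcal{F}$ fails to be a geometrically irreducible, geometrically non‑trivial sheaf; this happens precisely when $\mathcal{F}$ is geometrically a Kummer sheaf $\mathcal{L}_{\chi}$ with $\chi^d=1$ (necessarily $\chi\neq 1$ since $\mathcal{F}$ is not geometrically trivial), or, more generally, is geometrically induced from the isogeny $x\mapsto x^{s}$ for some $s\mid d$, $s>1$. In the Kummer case the sum equals, up to a bounded factor, $\sum_{\mathbf{x}}\chi(F(\mathbf{x}))\psi(\langle\mathbf{x},\mathbf{u}\rangle)+t_{\mathcal{F}}(0)\sum_{\mathbf{x}\in V(F)(\mathbb{F}_p)}\psi(\langle\mathbf{x},\mathbf{u}\rangle)$, and both terms are $\ll_{d,n}p^{n/2}$: the first by the multiplicative‑character version of the mixed sum (Katz, \cite{Kat02a}, cited after Theorem \ref{cor : mix}), the second by Deligne's bound, namely Theorem \ref{thm : 1} applied with $\mathcal{F}=\mathcal{L}_{\psi}$, again using that $V(\langle\mathbf{X},\mathbf{u}\rangle)$ is not tangent to $V(F)$; the induced cases $1<s<d$ are reduced to the Kummer case by writing $\mathcal{F}$ as the pushforward along $x\mapsto x^{s}$ and splitting according to the $s$‑th power classes. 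I expect the main obstacle to be exactly this bookkeeping of the geometric monodromy of $[x\mapsto x^d]^{*}\mathcal{F}$ — identifying the twist‑periodic and Kummer sheaves and routing them to \cite{Kat02a} — together with the technical point that $[x\mapsto x^d]^{*}$ need not preserve the middle‑extension property, which must be handled before invoking Lemma \ref{lem : irr}$(i)$; in the generic (twist‑free) case the corollary is an immediate consequence of Theorem \ref{cor : mix} and Lemma \ref{lem : irr}$(i)$.
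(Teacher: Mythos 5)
Your proposal follows essentially the same route as the paper: reduce to Theorem \ref{cor : mix} with $G=\langle\mathbf{X},\mathbf{u}\rangle$ and $e=1$, and divert the case where $\mathcal{F}$ is geometrically a Kummer sheaf of order dividing $d$ to Katz's multiplicative-character estimates (the paper cites \cite{Kat07} rather than \cite{Kat02a}). You are in fact more careful than the paper on one point: the paper only verifies that $[x\mapsto x^{d}]^{*}\mathcal{F}$ is not geometrically trivial (by computing $\sum_{x}t(x^{d})$ via orthogonality against characters of order dividing $d$), whereas Theorem \ref{cor : mix} as stated also requires geometric irreducibility of $T_{1}([x\mapsto x^{d}]^{*}\mathcal{F})$ --- precisely the twist-symmetry/induced-sheaf bookkeeping that you flag and sketch in your residual case.
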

\begin{proof}
Let us denote $\mathcal{F}$ the $\ell$-adic sheaf attached to $t$. If $\mathcal{F}=_{\geom}\mathcal{K}_{\chi (T)}$ a Kummer sheaf attached to a character $\chi$ of order dividing $d$ then the result is a special case of \cite[Theorem $1$]{Kat07}. So we may assume $\mathcal{F}\neq_{\geom}\mathcal{K}_{\chi (T)}$ for any Kummer sheaf attached to a character $\chi$ of order dividing $d$. We only need to prove that $[x\mapsto x^{d}]^{*}\mathcal{F}$ is not geometrically trivial. We start writing
\[
\begin{split}
\sum_{x\in\mathbb{F}_{p}}t_{[x\mapsto x^{d}]^{*}\mathcal{F}}(x)&=\sum_{x\in\mathbb{F}_{p}}t(x^{d})\\&=\sum_{z\in\mathbb{F}_{p}}t(z)\sum_{\chi :\chi^{d}=1}\chi (z)\\&=\sum_{\chi :\chi^{d}=1}\sum_{z\in\mathbb{F}_{p}}t(z)\chi (z)\\&\ll_{c(\mathcal{F}),d}\sqrt{p},
\end{split}
\]
because $\mathcal{F}\neq_{\geom}\mathcal{K}_{\chi}$ for any character $\chi$ of order dividing $d$. Thus $[x\mapsto x^{d}]^{*}\mathcal{F}$ is not geometrically trivial. So we can apply Theorem $\ref{cor : mix}$.
\end{proof}

\begin{cor}
Let $p$ be a prime number and $m\geq 2$, and $F\in\mathbb{F}_{p}[X_{1},...,X_{n}]$ an irreducible homogeneous polynomial of degree $d>1$ such that the projective hypersurface $V(F)\subset \mathbb{P}_{\mathbb{F}_{p}}^{n-1}$ is smooth. For any $\mathbf{u}\in\mathbb{F}_{p}^{n}$ such that $V(\langle\mathbf{x},\mathbf{u}\rangle)$ is not tangent to $V(F)$ (i.e. with $V(F)\cap V(\langle\mathbf{x},\mathbf{u}\rangle)$ is smooth of codimention $2$ in $\mathbb{P}_{\mathbb{F}_{p}}^{n-1}$) one has
\[
\sum_{\mathbf{x}\in\mathbb{F}_{p}^{n}}\Kl_{m}(F(\mathbf{x});p)e\Big(\frac{\langle\mathbf{x},\mathbf{u}\rangle}{p}\Big)\ll_{d,n}p^{\frac{n}{2}},
\]
\end{cor}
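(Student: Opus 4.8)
The plan is to obtain this as a consequence of Theorem~\ref{cor : mix} applied to $\mathcal{F}=\mathcal{K}\ell_{m}$. Recall (see \cite{Kat88}) that $\mathcal{K}\ell_{m}$ is a geometrically irreducible middle-extension $\ell$-adic sheaf on $\overline{\mathbb{A}}^{1}_{\mathbb{F}_{p}}$, pure of weight $0$, lisse of rank $m$ on $\mathbb{G}_{m}$, tamely ramified at $0$ with unipotent local monodromy consisting of a single Jordan block of size $m$, and wildly ramified at $\infty$ with $\Swan_{\infty}=1$; its trace function is $x\mapsto\Kl_{m}(x;p)$, so in particular $c(\mathcal{K}\ell_{m})$ is bounded in terms of $m$ only. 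I would apply Theorem~\ref{cor : mix} with this $\mathcal{F}$, with the given $F$ of degree $d$ (assuming, as the Deligne-type framework requires, that $p\nmid d$), with $G=\langle\mathbf{X},\mathbf{u}\rangle$ homogeneous of degree $e=1$ (so that $T_{e}=T_{1}=\FT_{\psi}$), and with $\psi(\cdot)=e(\cdot/p)$, $q=p$. We may assume $\mathbf{u}\neq 0$, since otherwise the non-tangency hypothesis already fails. Then $V(G)$ is a hyperplane, hence smooth; $V(F)$ is smooth by assumption; and $V(F)\cap V(G)$ is smooth of codimension $2$ in $\overline{\mathbb{P}}^{n-1}_{\mathbb{F}_{p}}$ precisely by the non-tangency assumption, so the geometric hypotheses of Theorem~\ref{cor : mix} hold.

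The substantive point is to verify that $T_{1}([x\mapsto x^{d}]^{*}\mathcal{K}\ell_{m})=\FT_{\psi}([x\mapsto x^{d}]^{*}\mathcal{K}\ell_{m})$ is geometrically irreducible and not geometrically trivial. First, $[x\mapsto x^{d}]^{*}\mathcal{K}\ell_{m}$ is lisse of rank $m$ on $\mathbb{G}_{m}$; since $p\nmid d$ the $d$-power map is tame, and pulling back the single unipotent Jordan block at $0$ along it yields again a single unipotent Jordan block of size $m$ (over $\overline{\mathbb{Q}}_{\ell}$ one has $U=\exp(N)$ with $N$ of maximal rank, so $U^{d}=\exp(dN)$ has the same rank). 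Hence the pulled-back sheaf is geometrically indecomposable; being pure of weight $0$ it is geometrically semisimple (Weil~II, \cite{Del80}), so it is in fact geometrically irreducible, and it is not geometrically trivial because its local monodromy at $0$ (and also at $\infty$) is nontrivial. It is therefore a geometrically irreducible middle-extension sheaf pure of weight $0$, i.e.\ lies in $\mathfrak{F}$, and having rank $m\geq 2$ it is not geometrically isomorphic to any Artin-Schreier sheaf, hence it is a Fourier sheaf. Lemma~\ref{lem : irr}$(i)$ then gives $\FT_{\psi}([x\mapsto x^{d}]^{*}\mathcal{K}\ell_{m})\in\mathfrak{F}$, so it is geometrically irreducible; and it is not geometrically trivial, since otherwise Fourier inversion would force $[x\mapsto x^{d}]^{*}\mathcal{K}\ell_{m}$ to be punctual, contradicting that it is lisse of positive rank on a dense open set.

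Theorem~\ref{cor : mix} now yields $\big|\sum_{\mathbf{x}\in\mathbb{F}_{p}^{n}}t_{\mathcal{K}\ell_{m}}(F(\mathbf{x}))\,e(\langle\mathbf{x},\mathbf{u}\rangle/p)\big|\ll_{d,m,n}p^{n/2}$, which is the claimed bound once we pass from $t_{\mathcal{K}\ell_{m}}$ to $\Kl_{m}(\cdot\,;p)$. Since $t_{\mathcal{K}\ell_{m}}(a)=\Kl_{m}(a;p)$ for $a\neq 0$, while $\Kl_{m}(0;p)=0$ and $|t_{\mathcal{K}\ell_{m}}(0)|\leq 1$, the two associated sums differ by $t_{\mathcal{K}\ell_{m}}(0)\sum_{\mathbf{x}:F(\mathbf{x})=0}e(\langle\mathbf{x},\mathbf{u}\rangle/p)$; writing $F(\mathbf{x})=0$ as $\tfrac{1}{p}\sum_{t\in\mathbb{F}_{p}}e(tF(\mathbf{x})/p)$ and using Deligne's bound $\sum_{\mathbf{x}}e\big((tF(\mathbf{x})+\langle\mathbf{x},\mathbf{u}\rangle)/p\big)\ll_{d,n}p^{n/2}$ for $t\neq 0$ (the case $t_{\mathcal{F}}=\psi$ of Theorem~\ref{thm : 1}, going back to \cite{Del74}; the term $t=0$ vanishes as $\mathbf{u}\neq 0$), this correction is also $O_{d,n}(p^{n/2})$, and the corollary follows.

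The only genuinely non-formal input is the geometric irreducibility of $[x\mapsto x^{d}]^{*}\mathcal{K}\ell_{m}$ together with the non-triviality of its Fourier transform: the irreducibility I would handle via the principle ``geometrically indecomposable plus pure of weight $0$ implies geometrically irreducible'' as above (one could instead invoke Katz's determination of the geometric monodromy group of $\mathcal{K}\ell_{m}$, which is always connected), and the non-triviality of the Fourier transform follows from Fourier inversion. Everything else is bookkeeping with the results already recalled in this paper.
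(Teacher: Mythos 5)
Your proof is correct, and it reaches the conclusion by a genuinely different route than the paper at the two key verification steps. For the geometric irreducibility of $[x\mapsto x^{d}]^{*}\mathcal{K}\ell_{m}$, the paper uses the diophantine criterion of \cite[Lemma $7.0.3$]{Kat96}: it computes the second moment $\frac{1}{q}\sum_{x}|\Kl_{m}(x^{d})|^{2}$ by expanding the condition ``$x$ is a $d$-th power'' into multiplicative characters, and rules out the off-diagonal terms because $\mathcal{K}\ell_{m}\otimes\mathcal{K}\ell_{m}$ is wild at $\infty$ while Kummer sheaves are tame; you instead argue purely locally, observing that the single unipotent Jordan block of $\mathcal{K}\ell_{m}$ at $0$ survives tame pullback, so the pullback is geometrically indecomposable, and then purity of weight $0$ gives geometric semisimplicity and hence irreducibility. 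For the non-triviality of $T_{1}([x\mapsto x^{d}]^{*}\mathcal{K}\ell_{m})$, the paper runs a three-way case analysis on $d$ versus $m$ through the break decomposition at $\infty$ and Katz--Laumon local Fourier theory, locating an actual singularity or wild point of the Fourier transform in each case; your Fourier-inversion argument (a geometrically constant Fourier transform would force the original sheaf to be punctual, contradicting positive generic rank) dispenses with that case analysis entirely. Your route is shorter and arguably more robust, at the cost of invoking the semisimplicity consequence of Weil II and the standard local description of $\mathcal{K}\ell_{m}$ at $0$; the paper's route stays closer to trace-function computations and produces the extra information of \emph{where} the Fourier transform is ramified (which is what its Theorem \ref{thm : 1} machinery ultimately consumes, though Theorem \ref{cor : mix} as stated only asks for irreducibility and non-triviality, which is all you verify and all you need). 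One point where you are more careful than the paper: you account for the mismatch $t_{\mathcal{K}\ell_{m}}(0)\neq\Kl_{m}(0;p)=0$ on the locus $F(\mathbf{x})=0$ and absorb it into the error term via Deligne's bound; the paper's proof passes over this silently.
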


\begin{proof}
Let start proving that $[x\mapsto x^{d}]^{*}\mathcal{K}\ell_{m}$ is irreducible. Thanks to \cite[Lemma $7.0.3$]{Kat96} it is enough to show that
\[
\frac{1}{q}\sum_{x\in\mathbb{F}_{q}}|\Kl_{m}(x^{d})|^{2}=1+O(p^{-1/2})
\]
Using the same argument as in Corollary $\ref{cor : sieve}$, one gets
\[
\begin{split}
\frac{1}{q}\sum_{x\in\mathbb{F}_{q}}|\Kl_{m}(x^{d})|^{2}&=\sum_{\chi:\chi^{d}=1}\sum_{z\in\mathbb{F}_{q}}\Kl_{m}(z)^{2}\chi(z)\\&=1+\sum_{\substack{\chi\neq 1:\\ \chi^{d}=1}}\sum_{z\in\mathbb{F}_{q}}\Kl_{m}(z)^{2}\chi(z)+O(p^{-1/2}).
\end{split}
\]
On the other hand, one has that
\[
\frac{1}{q}\sum_{z\in\mathbb{F}_{q}}\Kl_{m}(z)^{2}\chi(z)=1+O(p^{-1/2})
\]
if and only if $\mathcal{K}\ell_{m}\otimes\mathcal{K}\ell_{m}=_{\geom}\mathcal{K}_{\chi (T)}$ but this is not the case since $\mathcal{K}\ell_{m}\otimes\mathcal{K}\ell_{m}$ is wildly ramified at $\infty$ (\cite[Proposition $10.4.1$]{Kat88}) while $\mathcal{K}_{\chi(T)}$ is tame everywhere. Thus $[x\mapsto x^{d}]^{*}\mathcal{K}\ell_{m}$ is irreducible. Moreover $\mathcal{K}\ell_{m}$ is a Fourier sheaf, thus $T_{1}([x\mapsto x^{d}]^{*}\mathcal{K}\ell_{m})$ is irreducible (Lemma $\ref{lem : irr}$ part $(i)$). Now we have to distinguish three cases:
\begin{itemize}
\item[$(i)$] $d< m$, in this case $[x\mapsto x^{d}]^{*}\mathcal{K}\ell_{m}(\infty)$ has only one break at $\frac{d}{n}< 1$, thus $T_{1}([x\mapsto x^{d}]^{*}\mathcal{K}\ell_{m})$ is ramified at $0$ (\cite[Theorem $7.5.4$]{Kat90}),
\item[$(ii)$] $d=m$, in this case $[x\mapsto x^{d}]^{*}\mathcal{K}\ell_{m}(\infty)$ has only one break at $1$, then $T_{1}([x\mapsto x^{d}]^{*}\mathcal{K}\ell_{m})$ is singular at some $\lambda\in\overline{\mathbb{A}}_{\mathbb{F}_{q}}^{1}$ (\cite[Corollary $8.5.8$]{Kat88}),
\item[$(iii)$] $d> m$, in this case $[x\mapsto x^{d}]^{*}\mathcal{K}\ell_{m}(\infty)$ has only one break at $\frac{d}{n}> 1$, thus $T_{1}([x\mapsto x^{d}]^{*}\mathcal{K}\ell_{m})$ is wildly ramified at $\infty$ (\cite[Theorem $7.5.4$]{Kat90}).
\end{itemize}
In any case $T_{1}([x\mapsto x^{d}]^{*}\mathcal{K}\ell_{m})$ is not geometrically trivial. Thus, we can apply Theorem $\ref{cor : mix}$ and we get the result.
\end{proof}


\section*{Proof of Theorem $\ref{thm : box}$}
\label{sec : proof}
The strategy of the proof of Theorem $\ref{thm : box}$ is similar to the one presented by Munshi in \cite{Mun09} with some modification. Let $W:\mathbb{R}^{n+1}\longrightarrow\mathbb{R}$ be a non-negative $C^{\infty}$ function with support in the box $[-B,B]^{n+1}$ satisfying
\[
\Big|\frac{\partial^{i_{0}+...+i_{n}}W(X_{0},...,X_{n})}{\partial X^{i_{0}}...\partial X^{i_{n}}}\Big|\ll B^{-(i_{0}+...+i_{n})},
\]
for all $i_{0},...,i_{n}\geq 0$. This property leads to the following bound for the Fourier transform
\[
\hat{W}(\mathbf{u})\ll B^{n+1}\prod_{i=0}^{n}(1+|u_{i}|B)^{-\kappa}
\]
for any $\kappa>0$ and where $\mathbf{u}=(u_{0},...,u_{n})$. Let $\mathcal{P}\subset\mathcal{P}_{f}$ be a finite subset of $\mathcal{P}_{f}$ to be chosen later and define
\[
a(k):=
\begin{cases}
0 & \text{ if $k\in S$},\\
\sum_{\mathbf{x}\in\mathbb{Z}^{n+1}, F(\mathbf{x})=k}W(\mathbf{x}) & \text{ otherwise},
\end{cases}
\]
where $S:=\{k: |\{p\in\mathcal{P}:k\in S_{f,p}\mod p)\}|\geq \frac{P}{2d}\}$ and $P=|\mathcal{P}|$ . By definition $(a(k))_{k\in\mathbb{N}}$ satisfies the hypothesis of polynomial sieve, then we get
\[
\mathcal{V}_{f}(\mathcal{A})\ll_{d} P^{-1}\sum_{k}a(k)+
P^{-2}\sum_{p\neq q\in\mathcal{P}}\sum_{i,j\neq 1}\Big|\sum_{k}a(k)t_{i,p}(k)\overline{t}_{j,q}(k)\Big|.
\]
We estimate the first term in the right hand side trivially as $\sum_{k}a(k)\ll B^{n+1}$. Thus we have to bound
\[
\sum_{k}a(k)t_{i,p}(k)\overline{t}_{j,q}(k)=S(p,q,i,j)-E(p,q,i,j)
\]
for any $p\neq q$, where
\[
S(p,q,i,j):=\sum_{\mathbf{x}\in\mathbb{Z}^{n+1}}W(\mathbf{x})t_{i,p}(F(\mathbf{x}))\overline{t}_{j,q}(F(\mathbf{x}))
\]
and 
\[
E(p,q,i,j):=\sum_{\substack{k\in S\\ \mathbf{x}\in [-B,B]^{n+1} :F(\mathbf{x})=k}}W(\mathbf{x})t_{i,p}(k)\overline{t}_{j,q}(k).
\]
Using the Poisson summation formula we can rewrite $S(p,q,i,j)$ as
\[
S(p,q,i,j)=(pq)^{-(n+1)}\sum_{\mathbf{u}\in\mathbb{Z}^{n+1}}g(\mathbf{u},p,q,i,j)\hat{W}\Big(\frac{\mathbf{u}}{pq}\Big),
\] 
where
\[
g(\mathbf{u},p,q,i,j):=\sum_{\mathbf{a} \mod pq}t_{i,p}(F(\mathbf{a}))\overline{t}_{j,q}(F(\mathbf{a}))e\Big(\frac{\langle\mathbf{a},\mathbf{u}\rangle}{pq}\Big).
\]
Using the Chinese Remainder Theorem we can split the above sum as
\[
g(\mathbf{u},p,q,i,j)=g(\overline{q}\mathbf{u},t_{i,p})g(\overline{p}\mathbf{u},\overline{t}_{j,q}),
\]
with
\begin{equation}
g(\mathbf{u},t_{i,p}):=\sum_{\mathbf{a} \in\mathbb{F}_{p}^{n+1}}t_{i,p}(F(\mathbf{a}))e\Big(\frac{\langle\mathbf{a},\mathbf{u}\rangle}{p}\Big).
\label{eq : esti}
\end{equation}
The problem now is to give a good bound for $g(\mathbf{u},t_{i,p})$. Assume that the smooth variety $V(F)$ is still smooth modulo $p$ and denote by $V_{p}(F)$ its reduction modulo $p$. Moreover we denote by $V(F)^{*}$ its dual variety. Recall that the dual variety of a hypersurface is still a hypersurface \cite{ED16}[Proposition $2.9$]; we denote by $G$ the homogeneous polynomial of degree $e'$ such that $V(G)=V(F)^{*}\subset\overline{\mathbb{P}}^{n}$. We can distinguish three situations:
\begin{itemize}
\item[$i)$] $\mathbf{u}\equiv \mathbf{0}\mod p$. In this case we say that $\mathbf{u}$ is of \textit{$0$-type},
\item[$ii)$] $\mathbf{u}$ is non-zero modulo $p$ and the associate hyperplane $\langle\mathbf{a},\mathbf{u}\rangle =0$ is not tangent to $V_{p}(F)$. In this case we say that $\mathbf{u}$ is \textit{good},
\item[$iii)$] $\mathbf{u}$ is non-zero modulo $p$ and the associate hyperplane $\langle\mathbf{a},\mathbf{u}\rangle =0$ is tangent to $V_{p}(F)$. In this case we say that $\mathbf{u}$ is \textit{bad}.
\end{itemize}
Let us discuss cases $(i)$ and $(ii)$. Recall that the $t_{i}$s are trace functions attached to middle-extension sheaves of weight $0$ which are tame and geometrically irreducible. Moreover $F$ is a polynomial of Deligne type and if $u$ is good we have that $V_{p}(F)\cap V_{p}(\langle\mathbf{a},\mathbf{u}\rangle)$ is smooth of codimension $2$ in $\overline{\mathbb{P}}_{\mathbb{F}_{p}}^{n}$. Then we have
\[
\sum_{\mathbf{a} \in\mathbb{F}_{p}^{n+1}}t_{i,p}(F(\mathbf{a}))e\Big(\frac{\langle\mathbf{a},\mathbf{u}\rangle}{p}\Big)= \delta_{\mathbf{u}=\mathbf{0},p}(\mathbf{u})p^{n}\sum_{a\in\mathbb{F}_{p}}t_{i,p}(a)+O_{d,e,n,c(\mathcal{F}_{i})}(p^{\frac{n+1}{2}}),
\]
where $\delta_{\mathbf{u}=\mathbf{0},p}(\mathbf{u})=1$ if $\mathbf{u}=\mathbf{0}\mod p$ and $0$ otherwise. Observing that $c(\mathcal{F}_{i})\ll_{d} 1$ we get
\[
g(\mathbf{0},t_{i,p})\ll_{d,e,n} p^{n+\frac{1}{2}},
\]
and for $\mathbf{u}$ good
\[
g(\mathbf{u},t_{i,p})\ll_{d,e,n} p^{\frac{n+1}{2}}.
\]
For $\mathbf{u}$ bad, we have instead the following
\begin{lem}
If $\mathbf{u}$ is bad
\[
g(\mathbf{u},t_{i,p})\ll_{d,e,n}\sqrt{p}p^{\frac{n+1}{2}}.
\]
\label{lem : bad}
\end{lem}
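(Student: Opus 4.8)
The plan is to \emph{not} go through the sharp mixed‑sum estimate here: Theorem~\ref{cor : mix} genuinely breaks down, because ``$\mathbf{u}$ bad'' says precisely that the smoothness hypothesis on the hyperplane section of $V(F)$ cut out by $\langle\mathbf{x},\mathbf{u}\rangle=0$ fails. Instead I would deduce the (weaker, by a factor $p^{1/2}$) bound from Deligne's classical estimate for a one‑parameter family of polynomials of Deligne type, by completion and Plancherel. The first step is to recall that each $t_{i,p}$ is the trace function of a middle‑extension $\ell$-adic sheaf which is pointwise pure of weight $0$ and has conductor $\ll_d 1$ (Proposition~\ref{thm : cite}$(i)$), so $|t_{i,p}(z)|\ll_d 1$ for all $z\in\mathbb{F}_p$. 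Writing $\psi(x):=e(x/p)$ and grouping the sum $(\ref{eq : esti})$ according to the value $c=F(\mathbf{a})$, this reduces the Lemma to
\[
|g(\mathbf{u},t_{i,p})|\ll_{d}\sum_{c\in\mathbb{F}_p}|T(c)|,\qquad T(c):=\sum_{\substack{\mathbf{a}\in\mathbb{F}_p^{n+1}\\ F(\mathbf{a})=c}}\psi(\langle\mathbf{a},\mathbf{u}\rangle),
\]
and hence, by Cauchy--Schwarz, to the second moment estimate $\sum_{c\in\mathbb{F}_p}|T(c)|^{2}\ll_{d,e,n}p^{\,n+1}$ (which then gives $\sum_c|T(c)|\ll_{d,e,n}p^{(n+2)/2}=\sqrt p\,p^{(n+1)/2}$).

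For the second moment I would detect the condition $F(\mathbf{a})=c$ with additive characters, obtaining $T(c)=\frac1p\sum_{\theta\in\mathbb{F}_p}\psi(-\theta c)\,E(\theta)$ where
\[
E(\theta):=\sum_{\mathbf{a}\in\mathbb{F}_p^{n+1}}\psi\bigl(\theta F(\mathbf{a})+\langle\mathbf{a},\mathbf{u}\rangle\bigr),
\]
so that by orthogonality on $\mathbb{F}_p$ one has $\sum_{c\in\mathbb{F}_p}|T(c)|^{2}=\frac1p\sum_{\theta\in\mathbb{F}_p}|E(\theta)|^{2}$. For $\theta=0$ one has $E(0)=\sum_{\mathbf{a}}\psi(\langle\mathbf{a},\mathbf{u}\rangle)=0$ since $\mathbf{u}$ is non‑zero modulo $p$. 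For $\theta\ne 0$ the polynomial $\theta F+\langle\mathbf{X},\mathbf{u}\rangle\in\mathbb{F}_p[X_0,\dots,X_n]$ has degree $e>1$, its leading form is $\theta F$, and by the running assumption $V_p(F)$ is a smooth projective hypersurface with $p\nmid e$; hence it is a polynomial of Deligne type in the sense of Definition~\ref{def : delpol}, and Deligne's bound for exponential sums of such polynomials (\cite{Del74}, i.e.\ the case $t_{\mathcal F}=\psi$ of Theorem~\ref{thm : 1}) yields $|E(\theta)|\ll_{e,n}p^{(n+1)/2}$. Summing over the $p-1$ non‑zero $\theta$ gives $\sum_{c}|T(c)|^{2}\ll_{e,n}p^{\,n+1}$, which is exactly what is needed.

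I do not expect a genuine obstacle here; the only thing to watch is that the perturbing term $\langle\mathbf{X},\mathbf{u}\rangle$ has degree $1<e$ and therefore does not affect the leading form $\theta F$, so the Deligne‑type hypothesis holds \emph{uniformly} in $\mathbf{u}$ and in $\theta\ne 0$. In particular this argument never uses that $\mathbf{u}$ is bad: it bounds $g(\mathbf{u},t_{i,p})$ by $\sqrt p\,p^{(n+1)/2}$ for \emph{every} non‑zero $\mathbf{u}$ modulo $p$, and it is simply superseded by Theorem~\ref{cor : mix} (which saves the extra $p^{1/2}$) whenever $\mathbf{u}$ is good. The standing hypotheses invoked in the last step—$V_p(F)$ smooth and $p\nmid e$—hold for all $p$ in the sieving set once $O_{e,\|F\|,n}(1)$ primes are discarded, which is already in force in the proof of Theorem~\ref{thm : box}.
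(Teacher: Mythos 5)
Your argument is correct, and it lands on the same core input as the paper, namely Deligne's bound $\bigl|\sum_{\mathbf{a}\in\mathbb{F}_p^{n+1}}e\bigl((\theta F(\mathbf{a})+\langle\mathbf{a},\mathbf{u}\rangle)/p\bigr)\bigr|\ll_{e,n}p^{(n+1)/2}$ for $\theta\neq 0$, together with the vanishing of the $\theta=0$ term because $\mathbf{u}\not\equiv\mathbf{0}\bmod p$. The difference is in how the weight $t_{i,p}$ is stripped off. The paper Fourier-inverts $t_{i,p}$ itself: since $\mathcal{F}_{i}$ is tame it is a Fourier sheaf, its normalized Fourier transform $K_{i,p}$ is again a trace function with $\|K_{i,p}\|_{\infty}\ll c(\mathcal{F}_i)^2\ll_d 1$, and then $g(\mathbf{u},t_{i,p})=-p^{-1/2}\sum_b K_{i,p}(b)E(b)$ is bounded termwise ($L^1$ on the Fourier side times $L^\infty$ of $K_{i,p}$), giving the same $\sqrt{p}\,p^{(n+1)/2}$. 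You instead keep $t_{i,p}$ on the physical side, use only $\|t_{i,p}\|_\infty\ll_d 1$, and transfer the loss into a Cauchy--Schwarz plus Plancherel second-moment computation on the fiber counts $T(c)$. Your route is slightly more elementary and more robust -- it needs nothing about $t_{i,p}$ beyond pointwise boundedness, whereas the paper needs the sup-norm control of the Fourier transform of a tame middle-extension sheaf -- and both lose exactly the factor $p^{1/2}$. Your closing observations (that badness of $\mathbf{u}$ is never used, only $\mathbf{u}\not\equiv\mathbf{0}$, and that $p\nmid e$ and good reduction of $V(F)$ are already enforced by the choice of $\mathcal{P}$) are accurate and apply equally to the paper's proof.
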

\begin{proof} 
We denote be $K_{i,p}$ the normalized Fourier transform of $t_{i,p}$ (which is as well a trace function because $\mathcal{F}_{i}$ is tame and then $\mathcal{F}_{i}$ is a Fourier sheaf)
\[
K_{i,p}(y)=-\frac{1}{\sqrt{p}}\sum_{b\in\mathbb{F}_{p}} t_{i,p}(b)e\Big(\frac{by}{p}\Big).
\]
Starting from the Fourier inversion formula of $t_{i,p}$ one has
\[
t_{i,p}(F(\mathbf{a}))=-\frac{1}{\sqrt{p}}\sum_{b\in\mathbb{F}_{p}}K_{i,p}(b)e\Big(\frac{bF(\mathbf{a})}{p}\Big),
\]
and this leads to
\[
\begin{split}
g(\mathbf{u},t_{i,p})&=\sum_{\mathbf{a}\in\mathbb{F}_{p}^{n+1}}\Big(-\frac{1}{\sqrt{p}}\sum_{b\in\mathbb{F}_{p}}K_{i,p}(b)e\Big(\frac{bF(\mathbf{a})}{p}\Big)\Big)e\Big(\frac{\langle\mathbf{a},\mathbf{u}\rangle}{p}\Big)\\&=-\frac{1}{\sqrt{p}}\sum_{b\in\mathbb{F}_{p}}K_{i,p}(b)\sum_{\mathbf{a}\in\mathbb{F}_{p}^{n+1}}e\Big(\frac{bF(\mathbf{a})+\langle\mathbf{a},\mathbf{u}\rangle}{p}\Big)\\&\ll_{d,e,n}\sqrt{p}p^{\frac{n+1}{2}}
\end{split}
\]
where in the final bound we used the fact that $\|K_{i,p}\|_{\infty}\ll c(\mathcal{F}_{i})^{2}\ll_{d}1$ (\cite[Page $7$ combined with Paragraph $3.4$]{FKM14b}) and the Deligne's bound for additive character sums (\cite[Theorem $8.4$]{Del74}).
\end{proof}

At this point we choose the set $\mathcal{P}$ as
\[
\mathcal{P}:=\{B^{\frac{n+1}{n+2}}(\log B)^{\frac{1}{n+2}}\leq p\leq 2B^{\frac{n+1}{n+2}}(\log B)^{\frac{1}{n+2}}:f(\mathbb{F}_{p})\neq \mathbb{F}_{p},\text{ $p$ is of good reduction for } V(F)\}.
\]
Notice that since $F\in\mathbb{Z}[X_{0},...,X_{n}]$ defines a smooth projective hypersurface $V(F)$, $V_{p}(F)$ is smooth for all but finitelly many $p$. To see this we consider the resultant $r:=\text{Res}(F,\frac{\partial F}{\partial X_{0}},...,\frac{\partial F}{\partial X_{n}})$. Since $F,\frac{\partial F}{\partial X_{0}},...,\frac{\partial F}{\partial X_{n}}$ do not have any non trivial common solutions ($V(F)$ is smooth), $r\in\mathbb{Z}\setminus\{0\}$. On the other hand, $V_{p}(F)$ is singular if and only if $p|r$ (\cite{Char92}[Section IV]). Thus $V_{p}(F)$ is singular for $\omega (r)$ prime numbers. Then, $P=|\mathcal{P}|\sim \Big(\frac{B}{\log B}\Big)^{\frac{n+1}{n+2}}$ (the set of prime $p$ such that $f(\mathbb{F}_{p})\neq\mathbb{F}_{p}$ has positive density in the set of the prime numbers). Thus, we obtain
\[
\mathcal{V}_{f}(\mathcal{A})\ll_{d,e,n}B^{n+\frac{1}{n+2}}(\log B)^{\frac{n+1}{n+2}} +\frac{1}{P^{2}}\sum_{p\neq q\in\mathcal{P}}S(p,q)+ \frac{1}{P^{2}}\sum_{p\neq q\in\mathcal{P}}E(p,q).
\]
where $S(p,q):=\sum_{i,j\neq 1}|S(p,q,i,j)|$ and $E(p,q):=\sum_{i,j\neq 1}|E(p,q,i,j)|$. To conclude now it is enough to analyze the contribution of the $S(p,q)$s and of the $E(p,q)$s. We start studying the contribution of the $S(p,q)$s:
\begin{lem}
We have that
\[
\frac{1}{P^{2}}\sum_{p\neq q\in\mathcal{P}}S(p,q)\ll_{d,e,n}B^{n+\frac{1}{n+2}}(\log B)^{\frac{n+1}{n+2}}.
\]
\label{lem : avesieve}
\end{lem}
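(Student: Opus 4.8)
The plan is to expand $S(p,q,i,j)$ by the Poisson identity already recorded, $S(p,q,i,j)=(pq)^{-(n+1)}\sum_{\mathbf{u}\in\mathbb{Z}^{n+1}}g(\overline{q}\mathbf{u},t_{i,p})\,g(\overline{p}\mathbf{u},\overline{t}_{j,q})\,\hat{W}(\mathbf{u}/(pq))$, to move absolute values inside, and to split the $\mathbf{u}$-sum according to the type ($0$-type, good, bad) of $\mathbf{u}$ modulo $p$ and modulo $q$, using $g(\mathbf{0},t_{i,p})\ll p^{n+1/2}$, $g(\mathbf{u},t_{i,p})\ll p^{(n+1)/2}$ for $\mathbf{u}$ good, and $g(\mathbf{u},t_{i,p})\ll p^{(n+2)/2}$ for $\mathbf{u}$ bad (Lemma \ref{lem : bad}), together with the rapid decay of $\hat{W}$. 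With $Q:=(B^{n+1}\log B)^{1/(n+2)}$ every $p\in\mathcal{P}$ satisfies $p\asymp Q$, so $pq/B\asymp B^{n/(n+2)}(\log B)^{2/(n+2)}=o(\min(p,q))$; hence a nonzero $\mathbf{u}$ in the effective support of $\hat{W}(\cdot/(pq))$ is automatically nonzero modulo $p$ and modulo $q$, so I only have to distinguish ``good'' from ``bad'' at each of $p,q$, and the full sum $\sum_{\mathbf{u}}|\hat{W}(\mathbf{u}/(pq))|$ costs a factor $\asymp (pq/B)^{n+1}$.

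First I would dispose of the easy contributions. The term $\mathbf{u}=\mathbf{0}$ gives, using $\sum_{a}t_{i,p}(a)\ll_{d}p^{1/2}$ for $i\neq 1$, a bound $\ll(pq)^{-(n+1)}p^{n+1/2}q^{n+1/2}B^{n+1}\ll B^{n+1}/Q=B^{n+\frac1{n+2}}(\log B)^{-\frac1{n+2}}$, well below the target. For $\mathbf{u}\neq\mathbf{0}$ good at both $p$ and $q$ I would bound the $g$'s by $(pq)^{(n+1)/2}$ and the $\mathbf{u}$-sum by $B^{n+1}(pq/B)^{n+1}$, getting $\ll(pq)^{(n+1)/2}\asymp Q^{n+1}$; after averaging over $p\neq q$ this is $\ll Q^{n+1}=B^{n+\frac1{n+2}}(\log B)^{\frac{n+1}{n+2}}$, exactly of the target size. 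This good--good term is dominant, and the choice of $\mathcal{P}$ is made precisely so that it, together with the term $P^{-1}\sum_{k}a(k)\ll B^{n+1}/P$ in the sieve inequality, is of this size.

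The delicate part is $\mathbf{u}\neq\mathbf{0}$ which is \emph{bad} at $p$ or at $q$, where $g$ can reach $(pq)^{(n+2)/2}$, so I must show such $\mathbf{u}$ are sparse. Here I would introduce, as in the text, the defining form $G$ of the dual hypersurface $V(G)=V(F)^{*}\subset\overline{\mathbb{P}}^{n}$, of degree $e'=e(e-1)^{n-1}\geq 2$, so that for primes of good reduction ``$\mathbf{u}$ bad modulo $p$'' means exactly $p\mid G(\mathbf{u})$ with $\mathbf{u}\not\equiv\mathbf{0}$. I then interchange summation, fixing $\mathbf{u}$ first and splitting on whether $G(\mathbf{u})=0$. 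If $G(\mathbf{u})\neq 0$, then since $|G(\mathbf{u})|\ll_{d,e,n,\|F\|}(pq/B)^{e'}$ on the effective support and every $p\in\mathcal{P}$ is $\gg Q$, the number of $p\in\mathcal{P}$ making $\mathbf{u}$ bad is $\ll\log|G(\mathbf{u})|/\log Q\ll_{d,e,n}1$; one of the two prime-sums is thus essentially free, and a short computation gives a contribution $\ll(\log B)\,Q^{(2n+1)/2}$, which has a strictly smaller power of $B$ than the target (the ``bad at $q$'' and ``bad at both'' sub-cases are even smaller). If instead $G(\mathbf{u})=0$, i.e. $\mathbf{u}$ lies on the affine cone over $V(F)^{*}$, then averaging over $p,q$ gains nothing, and the crucial input is a determinant-method / dimension-growth point count for the geometrically integral degree-$e'$ hypersurface $V(F)^{*}$, which yields $\#\{\mathbf{u}\in\mathbb{Z}^{n+1}:G(\mathbf{u})=0,\ \|\mathbf{u}\|_{\infty}\leq L\}\ll_{d,e,n,\epsilon}L^{\,n-1+\epsilon}$; hence $\sum_{G(\mathbf{u})=0}|\hat{W}(\mathbf{u}/(pq))|\ll_{\epsilon}B^{2}(pq/B)^{\epsilon}(pq)^{n-1}$ and a final contribution $\ll_{\epsilon}B^{2+\epsilon'}Q^{\,n-2}$, again with a strictly smaller power of $B$ (for $n\geq 2$). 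Summing the four contributions yields the claimed bound.

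The step I expect to be the main obstacle is exactly this last one: the $\mathbf{u}$ lying on the dual variety are ``bad for every modulus'', so the averaging over $p$ and $q$ is useless on them, and one is forced to replace the trivial count $L^{n}$ of integer points of height $\leq L$ on the $n$-dimensional affine cone by the sharper $L^{n-1+\epsilon}$ coming from the theory of rational points on hypersurfaces of degree $\geq 2$. Everything else—the good--good main term, the sparse bad vectors with $G(\mathbf{u})\neq 0$, and the balancing of $\mathcal{P}$—is bookkeeping organised around this estimate.
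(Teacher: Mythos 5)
Your argument follows the same overall strategy as the paper: Poisson summation, a case split of $\mathbf{u}$ by type modulo $p$ and modulo $q$, and for the bad $\mathbf{u}$'s a passage to the dual hypersurface $V(F)^{*}=V(G)$. Two of your observations streamline the bookkeeping relative to the paper. First, since all $p,q\in\mathcal{P}$ satisfy $p,q\asymp Q$ and $pq/B\asymp B^{n/(n+2)+o(1)}\ll Q$, any nonzero $\mathbf{u}$ in the effective support of $\hat W(\cdot/(pq))$ is automatically nonzero modulo $p$ and $q$, so the mixed types $(0,g)$, $(0,b)$, etc.\ only involve $\mathbf{u}=\mathbf{0}$; the paper keeps all nine types formally. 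Second, because every prime in $\mathcal{P}$ exceeds $Q$, a nonzero resultant of size polynomial in $\|\mathbf{u}\|\ll Q^{2}/B$ can have at most $O_{d,e,n}(1)$ prime factors in $\mathcal{P}$, so each $\mathbf{u}$ with $G(\mathbf{u})\neq 0$ is bad for at most $O(1)$ moduli in $\mathcal{P}$; the paper instead invokes the weaker $\omega(\mathrm{Res})\ll\log\|\mathbf{u}\|$ bound and absorbs the log into a $Q^{n+\varepsilon}$. Both lead to the same conclusion.

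The one genuine departure is in the $G(\mathbf{u})=0$ case. You invoke the dimension-growth bound $\#\{\mathbf{u}\in\mathbb{Z}^{n+1}:G(\mathbf{u})=0,\ \|\mathbf{u}\|_{\infty}\le L\}\ll_{\varepsilon}L^{n-1+\varepsilon}$ for the geometrically integral degree-$e'$ hypersurface $V(F)^{*}\subset\mathbb{P}^{n}$, which immediately yields a contribution $\ll B^{2+\varepsilon'}Q^{n-2}$, dominated by the target $Q^{n+1}=B^{n+\frac{1}{n+2}}(\log B)^{\frac{n+1}{n+2}}$ once $n\ge 2$. The paper uses only the weaker Pila bound $\ll L^{n-1+1/e'+\varepsilon}$ (and Heath-Brown's Theorem~3 for the special case $(e',n)=(2,2)$), which is not strong enough on its own; it compensates by introducing a cutoff box $\mathcal{C}_{\alpha}$ of side $Q^{2+2\alpha}/B^{1+\alpha}$, bounding the points inside $\mathcal{C}_{\alpha}$ with Pila, bounding the tail outside $\mathcal{C}_{\alpha}$ using the rapid decay of $\hat W$, and then optimising $\alpha$ and $\kappa$. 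Your route is cleaner and avoids the entire balancing argument, at the cost of relying on the full dimension-growth theorem for hypersurfaces of degree $\ge 2$ (Salberger, Browning--Heath-Brown--Salberger, etc.), which is a deeper input than Pila 1995. Both approaches close, and both recover the same dominant good--good contribution $\asymp Q^{n+1}$ that ultimately determines the choice of $Q$.

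One point you should make explicit to be fully rigorous: the identification ``$\mathbf{u}$ bad modulo $p$ $\Longleftrightarrow$ $p\mid G(\mathbf{u})$'' requires that the reduction of $V(F)^{*}$ modulo $p$ agrees with the dual of $V_{p}(F)$, which holds for all but $O_{F}(1)$ primes; these can be removed from $\mathcal{P}$ at the outset. The paper sidesteps this by working with the resultant $\mathrm{Res}(H_{0,\mathbf{u}},\dots,H_{n+1,\mathbf{u}})$ of the Jacobian minors directly rather than with $G$, which has the same effect.
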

\begin{proof}
To simplify the notation, in the following we denote $Q:=B^{\frac{n+1}{n+2}}(\log B)^{\frac{1}{n+2}}$: then we have that for any $p,q\in\mathcal{P}$, $Q\leq p,q\leq 2Q$. Moreover, let us denote $S_{g,g}(p,q)$ (resp. $S_{0,0}(p,q)$, $S_{0,g}(p,q)$, $S_{g,0}(p,q)$, $S_{b,b}(p,q)$, $S_{b,g}(p,q)$, $S_{g,b}(p,q)$, $S_{b,0}(p,q)$, $S_{0,b}(p,q)$), the contribution to $S(p,q)$ of the $\mathbf{u}$'s which are good for both $p$ and $q$ (resp. the contribution to $S(p,q)$ of the $\mathbf{u}$'s which are of type $0$ for both $p$ and $q$, and so on). Let us start with the contribution of the $\mathbf{u}\in\mathbb{Z}^{n+1}$ which are good for both $p$ and $q$:
\[
S_{g,g}(p,q)\ll B^{n+1}\frac{(pq)^{\frac{n+1}{2}}}{(pq)^{n+1}}\sum_{\mathbf{u}\in\mathbb{Z}^{n+1}}\prod_{i=0}^{n}\Big(1+\frac{|u_{i}|B}{pq}\Big)^{-2}\ll (pq)^{\frac{n+1}{2}}
\]
where the last step follows from the fact that $pq\geq B$. Thus we conclude that
\[
\frac{1}{P^{2}}\sum_{p\neq q\in\mathcal{P}}S_{g,g}(p,q)\ll Q^{n+1}=B^{n+\frac{1}{n+2}}(\log B)^{\frac{n+1}{n+2}}.
\]
Let us now discuss the contribution of $S_{0,0}(p,q)$s
\[
\begin{split}
S_{0,0}(p,q)&\ll \frac{B^{n+1}\cdot (pq)^{n+\frac{1}{2}}}{(pq)^{n+1}}\sum_{\substack{\mathbf{u}\in\mathbb{Z}^{n+1},\\ \mathbf{u}\equiv 0\text{ } (pq)}}\prod_{i=0}^{n}\Big(1+\frac{|u_{i}|B}{pq}\Big)^{-2}\\&\ll \frac{B^{n+1}}{(pq)^{\frac{1}{2}}}.
\end{split}
\]
Thus
\[
\frac{1}{P^{2}}\sum_{p\neq q\in\mathcal{P}}S_{0,0}(p,q)\ll \frac{B^{n+1}}{Q}\ll B^{n+\frac{1}{n+2}}
\]
since $Q=B^{\frac{n+1}{n+2}}(\log B)^{\frac{1}{n+2}}$. Let us continue with the contribution of the $S_{b,b}(p,q)$, i.e.
\[
\frac{1}{P^{2}}\sum_{p\neq q\in\mathcal{P}}S_{b,b}(p,q).
\]
We have that, if $\mathbf{u}$ is bad for $p$ and $q$ then 
\[
S_{b,b}(p,q)\ll\frac{B^{n+1}\cdot (pq)^{\frac{n+2}{2}}}{(pq)^{n+1}}\sum_{\substack{\mathbf{u}\in\mathbb{Z}^{n+1}\\ \mathbf{u}\text{ is bad for }p\\ \mathbf{u}\text{ is bad for }q}}\prod_{i=0}^{n}\Big(1+\frac{|u_{i}|B}{pq}\Big)^{-\kappa}
\]
thanks to Lemma $\ref{lem : bad}$. Thus
\[
\frac{1}{P^{2}}\sum_{\substack{p,q\in\mathcal{P}\\p\neq q}}S_{b,b}(p,q)\ll \frac{Q^{n+2}\cdot B^{n+1}}{P^{2}\cdot Q^{2(n+1)}}\sum_{\substack{p,q\in\mathcal{P}\\p\neq q}}\sum_{\substack{\mathbf{u}\in\mathbb{Z}^{n+1}\\ \mathbf{u}\text{ is bad for }p\\ \mathbf{u}\text{ is bad for }q}}\prod_{i=0}^{n}\Big(1+\frac{|u_{i}|B}{pq}\Big)^{-\kappa}.
\]
Now, we recall that $G$ is the polynomial associated to the dual variety $V(F)^{*}\subset\overline{\mathbb{P}}^{n}$ and that $\deg G=e'$. We rewrite the above sum as
\begin{equation}
\frac{B^{n+1}}{P^{2}Q^{n}}\sum_{\substack{p,q\in\mathcal{P}\\p\neq q}}\sum_{\substack{\mathbf{u}\in\mathbb{Z}^{n+1}\\ \mathbf{u}\text{ is bad for }p\\ \mathbf{u}\text{ is bad for }q}}\prod_{i=0}^{n}\Big(1+\frac{|u_{i}|B}{pq}\Big)^{-\kappa}=\frac{B^{n+1}}{P^{2}Q^{n}}\sum_{\mathbf{u}\in\mathbb{Z}^{n+1}}\sum_{\substack{p,q\in\mathcal{P}\\p\neq q\\\\ \mathbf{u}\text{ is bad for }p\\ \mathbf{u}\text{ is bad for }q}}\prod_{i=0}^{n}\Big(1+\frac{|u_{i}|B}{pq}\Big)^{-\kappa}.
\label{eq : Type II}
\end{equation}
Now we split the sum over $\mathbf{u}$ in the right hand side of $(\ref{eq : Type II})$ in two
\[
\sum_{\mathbf{u}\in\mathbb{Z}^{n+1}}=\sum_{\substack{\mathbf{u}\in\mathbb{Z}^{n+1}\\G(\mathbf{u})\neq 0}}+\sum_{\substack{\mathbf{u}\in\mathbb{Z}^{n+1}\\G(\mathbf{u})= 0}}.
\]
Let us bound the contribution of the $\mathbf{u}$s which satisfy $G(\mathbf{u})\neq 0$. We need to estimate the numbers of $p$ prime numbers such that $\mathbf{u}$ is bad modulo $p$. Since $G(\mathbf{u})\neq 0$, the variety $V(F)\cap V(\langle \mathbf{X},\mathbf{u}\rangle)$\footnote{We denote by $\langle \mathbf{X},\mathbf{u}\rangle$ the linear equation $ u_{0}X_{0}+...+u_{n}X_{n}$} is smooth, i.e. the Jacobian
\[
\begin{pmatrix}
F & \langle \mathbf{X},\mathbf{u}\rangle\\ \frac{\partial F}{\partial{X_{0}}} & u_{0}\\ \vdots & \vdots\\ \frac{\partial F}{\partial{X_{n}}} & u_{n}
\end{pmatrix}
\]
has maximal rank for any $x\in V(F)\cap V(\langle \mathbf{X},\mathbf{u}\rangle)$. Then if we define $H_{0,\mathbf{u}}=F$, $H_{n+1,\mathbf{u}}=\langle \mathbf{X},\mathbf{u}\rangle$ and
\[
H_{i,\mathbf{u}}:=\det
\begin{pmatrix}
\frac{\partial F}{\partial{X_{i-1}}} & u_{i-1}\\\frac{\partial F}{\partial{X_{i}}} & u_{i}
\end{pmatrix}
\]
for any $i=1,...,n$, we get that the resultant of the $H_{i,\mathbf{u}}$s, $\text{Res}(H_{0,\mathbf{u}},...,H_{n+1,\mathbf{u}})$, is a non zero integer. In the following for any polynomial $H\in\mathbb{Z}[X_{0},...,X_{n}]$ we denote by $\overline{H}:= H\mod p$. If $\mathbf{u}$ is bad modulo $p$, then $V_{p}(F)\cap V_{p}(\langle \mathbf{X},\mathbf{u}\rangle)=V(\overline{F}), \cap V(\overline{\langle \mathbf{X},\mathbf{u}\rangle)})$ is singular and this is true if and only if $\overline{H}_{0,\mathbf{u}},...,\overline{H}_{n+1,\mathbf{u}}$ have a common non trivial root. On the other hand, $\overline{H}_{0,\mathbf{u}},...,\overline{H}_{n+1,\mathbf{u}}$ have a common root if and only if $p|\text{Res}(H_{0,\mathbf{u}},...,H_{n+1,\mathbf{u}})$ (\cite{Char92}[Section IV]). Then we conclude that 
\[
|\{p:\mathbf{u}\text{ is bad}\mod p\}|=\omega (\text{Res}(H_{0,\mathbf{u}},...,H_{n+1,\mathbf{u}})),
\]
where for any $n$, $\omega (n)$ denotes the number of distinct prime factors of $n$.  Using \cite{GKZ08}[Proposition $1.1$, Chapter $13$], we conclude that $\omega (\text{Res}(H_{0,\mathbf{u}},...,H_{n+1,\mathbf{u}}))\ll_{d,\|F\|}\log \|\mathbf{u}\|$
Thus
\[
\begin{split}
\frac{B^{n+1}}{P^{2}Q^{n}}\sum_{\substack{\mathbf{u}\in\mathbb{Z}^{n+1}\\G(\mathbf{u})\neq 0}}\sum_{\substack{p,q\in\mathcal{P}\\p\neq q\\\\ \mathbf{u}\text{ is bad for }p\\ \mathbf{u}\text{ is bad for }q}}\prod_{i=0}^{n}\Big(1+\frac{|u_{i}|B}{pq}\Big)^{-\kappa}&\ll \frac{B^{n+1}}{P^{2}Q^{n}}\sum_{\substack{\mathbf{u}\in\mathbb{Z}^{n+1}\\G(\mathbf{u})\neq 0}}\sum_{\substack{p,q\in\mathcal{P}\\p\neq q\\\\ \mathbf{u}\text{ is bad for }p\\ \mathbf{u}\text{ is bad for }q}}\prod_{i=0}^{n}\Big(1+\frac{|u_{i}|B}{4Q^{2}}\Big)^{-\kappa}\\&\ll\frac{B^{n+1}}{P^{2}Q^{n}}\sum_{\substack{\mathbf{u}\in\mathbb{Z}^{n+1}\\ \mathbf{u}\neq \mathbf{0}}}\Big(\prod_{i=0}^{n}\Big(1+\frac{|u_{i}|B}{4Q^{2}}\Big)^{-\kappa}\Big)(\log \|\mathbf{u}\|)^{2}\\&\ll\frac{B^{n+1}}{P^{2}Q^{n}}\cdot\frac{Q^{2(n+1)+\varepsilon}}{B^{n+1+\varepsilon}}\\&\ll Q^{n+\varepsilon},
\end{split}
\]
for some $\varepsilon> 0$. Let us bound the contribution of the $\mathbf{u}$s which satisfy $G(\mathbf{u})=0$. We start writing
\[
\begin{split}
\frac{B^{n+1}}{P^{2}Q^{n}}\sum_{\substack{\mathbf{u}\in\mathbb{Z}^{n+1}\\G(\mathbf{u})= 0}}\sum_{\substack{p,q\in\mathcal{P}\\p\neq q}}\prod_{i=0}^{n}\Big(1+\frac{|u_{i}|B}{pq}\Big)^{-\kappa}&\leq\frac{B^{n+1}}{P^{2}Q^{n}}\sum_{\substack{\mathbf{u}\in\mathbb{Z}^{n+1}\\G(\mathbf{u})= 0}}\sum_{\substack{p,q\in\mathcal{P}\\p\neq q}}\prod_{i=0}^{n}\Big(1+\frac{|u_{i}|B}{4Q^{2}}\Big)^{-\kappa}\\&\leq\frac{B^{n+1}}{Q^{n}}\sum_{\substack{\mathbf{u}\in\mathbb{Z}^{n+1}\\G(\mathbf{u})= 0}}\prod_{i=0}^{n}\Big(1+\frac{|u_{i}|B}{4Q^{2}}\Big)^{-\kappa}.
\end{split}
\]
Let $\alpha >0$ be a parameter to be chosen later. We denote by
\[
\mathcal{C}_{\alpha}:=\Big[-\frac{Q^{2+2\alpha}}{B^{1+\alpha}},\frac{Q^{2+2\alpha}}{B^{1+\alpha}}\Big]\times\cdots\times\Big[-\frac{Q^{2+2\alpha}}{B^{1+\alpha}},\frac{Q^{2+2\alpha}}{B^{1+\alpha}}\Big].
\]
Then
\begin{equation}
\begin{split}
\frac{B^{n+1}}{Q^{n}}\sum_{\substack{\mathbf{u}\in\mathbb{Z}^{n+1}\\G(\mathbf{u})= 0}}\prod_{i=0}^{n}\Big(1+\frac{|u_{i}|B}{4Q^{2}}\Big)^{-\kappa}=\frac{B^{n+1}}{Q^{n}}\Big(\sum_{\substack{\mathbf{u}\in\mathcal{C}_{\alpha}\\G(\mathbf{u})= 0}}\prod_{i=0}^{n}\Big(1+\frac{|u_{i}|B}{4Q^{2}}\Big)^{-\kappa}+\sum_{\substack{\mathbf{u}\in\mathbb{Z}^{n+1}\setminus\mathcal{C}_{\alpha}\\G(\mathbf{u})= 0}}\prod_{i=0}^{n}\Big(1+\frac{|u_{i}|B}{4Q^{2}}\Big)^{-\kappa}\Big)
\end{split}
\label{eq : avabad}
\end{equation}
Let us start with the first term in the right hand side of $(\ref{eq : avabad})$. First, we have
\[
\frac{B^{n+1}}{Q^{n}}\sum_{\substack{\mathbf{u}\in\mathcal{C}_{\alpha}\\G(\mathbf{u})= 0}}\prod_{i=0}^{n}\Big(1+\frac{|u_{i}|B}{4Q^{2}}\Big)^{-\kappa}\ll\frac{B^{n+1}}{Q^{n}}\sum_{\substack{\mathbf{u}\in\mathcal{C}_{\alpha}\\G(\mathbf{u})= 0}}1.
\]
We distinguish two cases:
\begin{itemize}
\item[$i)$] $(e',n)\neq (2,2)$. We have that
\[
\sum_{\substack{\mathbf{u}\in\mathcal{C}_{\alpha}\\G(\mathbf{u})= 0}}1\leq \Big(\frac{Q^{2+2\alpha}}{B^{1+\alpha}}\Big)^{n-1+\frac{1}{e'}+\varepsilon},
\]
for any $\varepsilon > 0$ (\cite{Pila95}). Thus
\[
\frac{B^{n+1}}{Q^{n}}\sum_{\substack{\mathbf{u}\in\mathcal{C}_{\alpha}\\G(\mathbf{u})= 0}}\prod_{i=0}^{n}\Big(1+\frac{|u_{i}|B}{4Q^{2}}\Big)^{-\kappa}\ll B^{2-g(\alpha)}Q^{n-2+2g(\alpha)+\varepsilon '},
\]
where $g(\alpha):=1/e'+\alpha (n-1+ 1/e')$. The other term in the right hand side of $(\ref{eq : avabad})$ is bounded by
\[
\begin{split}
\frac{B^{n+1}}{Q^{n}}\sum_{\substack{\mathbf{u}\in\mathbb{Z}^{n+1}\setminus\mathcal{C}_{\alpha}\\G(\mathbf{u})= 0}}\prod_{i=0}^{n}\Big(1+\frac{|u_{i}|B}{4Q^{2}}\Big)^{-\kappa}&\ll \frac{B^{n+1}}{Q^{n}}\sum_{i=0}^{n}\sum_{\substack{\mathbf{u}\in\mathbb{Z}^{n+1}\\ |u_{i}|\geq \frac{Q^{2+2\alpha}}{B^{1+\alpha}} \\G(\mathbf{u})= 0}}\prod_{i=0}^{n}\Big(1+\frac{|u_{i}|B}{4Q^{2}}\Big)^{-\kappa}\\&\ll\frac{B^{n+1}}{Q^{n}}\cdot \Big(\frac{B^{\alpha}}{Q^{2\alpha}}\Big)^{\kappa -1}\cdot \Big(\frac{Q^{2}}{B}\Big)^{n}\\&\ll Q^{n-2\alpha(\kappa -1)}B^{1+\alpha(\kappa -1)}.
\end{split}
\]
Thus, we conclude that
\[
P^{-2}\sum_{\substack{p,q\in\mathcal{P}\\p\neq q}}S_{b,b}(p,q)\ll Q^{n-2\alpha(\kappa -1)}B^{1+\alpha(\kappa -1)}+B^{2-g(\alpha)}Q^{n-2+2g(\alpha)+\varepsilon}+ Q^{n+\varepsilon},
\]
for any $\varepsilon >0$. Our goal is two show that
\[
B^{2-g(\alpha)}Q^{n-2+2g(\alpha)+\varepsilon},Q^{n-2\alpha(\kappa -1)}B^{1+\alpha(\kappa -1)}\ll Q^{n+1}.
\]
To do so, we need to choose suitable values for $\alpha,\kappa$. Using the fact that $Q=B^{\frac{n+1}{n+2}}(\log B)^{\frac{1}{n+2}}=B^{1-\frac{1}{n+2}}(\log B)^{\frac{1}{n+2}}$, we get
\[
B^{2-g(\alpha)}B^{(1-1/(n+2))(n-2+2g(\alpha)+\varepsilon)}\ll B^{n+1/(n+2)}(\log B)^{\frac{n+1}{n+2}}
\]
if and only if
\[
\alpha < \Big(1-\frac{3}{n+2}-\frac{1}{e'}+\frac{2}{e'(n+2)}\Big)\beta (e',n)^{-1},
\]
where $\beta (e',n):=(n-1+1/e')(1-2/(n+2))>0$ for any $e',n\geq 1$. On the other hand choosing $\kappa\geq \alpha^{-1}+1$, we get
\[
Q^{n-2\alpha(\kappa -1)}B^{1+\alpha(\kappa -1)}\leq Q^{n-2}B^{2}=B^{n-(n-2)/(n+2)}\log B,
\]
as we wanted.
\item[$ii)$] $(e',n)=(2,2)$. In this case one use \cite{HB02}[Theorem $3$], getting
\[
\sum_{\substack{\mathbf{u}\in\mathcal{C}_{\alpha}\\G(\mathbf{u})= 0}}1\leq \Big(\frac{Q^{2+2\alpha}}{B^{1+\alpha}}\Big)^{1+\varepsilon},
\]
for any $\varepsilon > 0$ instead of \cite{Pila95}. Then the argument is the same.

\end{itemize}
The bounds for the contribution of $S_{g,0}(p,q)$, $S_{g,b}(p,q)$ and $S_{0,b}(p,q)$ are similar.
\end{proof}

Using this Lemma we obtain that
\[
\mathcal{V}_{f}(\mathcal{A})\ll_{d,e,n} B^{n+\frac{1}{n+2}}(\log B)^{\frac{n+1}{n+2}} +\frac{1}{P^{2}}\sum_{p\neq q\in\mathcal{P}}E(p,q).
\]
\begin{lem}
For any $p,q\in\mathcal{P}$, one has
\[
E(p,q,i,j)\ll_{d,e,n,\|f\|,\|F\|} B^{n}.
\]
\label{lem : rem}
\end{lem}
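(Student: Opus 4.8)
The plan is to discard all the arithmetic content of $E(p,q,i,j)$, reducing the bound to a lattice-point count, and then to exploit the fact that the exceptional set $S$ is extremely sparse on the range of values actually taken by $F$ on the box. First I would record two trivial facts: each $t_{i,p}$ is the trace function of a middle-extension $\ell$-adic sheaf $\mathcal{F}_{i,p}$ pure of weight $0$ with $c(\mathcal{F}_{i,p})\ll_{d}1$ (Proposition~\ref{thm : cite}), so $\|t_{i,p}\|_{\infty}\ll_{d}1$, and $\|W\|_{\infty}\ll 1$. Since $W$ is supported in $[-B,B]^{n+1}$, the triangle inequality gives, uniformly in $p,q\in\mathcal{P}$ and in $i,j$,
\[
|E(p,q,i,j)|\ \ll_{d}\ \#\{\mathbf{x}\in\mathbb{Z}^{n+1}\cap[-B,B]^{n+1}:\ F(\mathbf{x})\in S\}\ =:\ N,
\]
so it suffices to prove $N\ll_{d,e,n,\|f\|,\|F\|}B^{n}$.

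Next I would describe $S$ through the resultant $g(Y):=\mathrm{Res}_{T}\big(f(T)-Y,\,f'(T)\big)\in\mathbb{Z}[Y]$. As $d\geq 2$, $f(T)-Y$ is irreducible over $\mathbb{Q}(Y)$ and has $T$-degree strictly larger than that of $f'$, so $g$ is a nonzero polynomial of degree $\leq d-1$. For every prime $p$ outside a finite set depending only on $d$ and $\|f\|$ — namely those dividing $d\cdot\mathrm{lead}(f)$ or the content of $g$ — the resultant is compatible with reduction modulo $p$, whence $S_{f,p}\subseteq\{s\in\mathbb{F}_{p}:g(s)\equiv 0\bmod p\}$, and therefore $k\bmod p\in S_{f,p}\Rightarrow p\mid g(k)$. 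Consequently, if $k\in S$ then $g(k)$ has at least $\tfrac{P}{2d}-O_{d,\|f\|}(1)$, hence at least $\tfrac{P}{4d}$ for $B$ large, distinct prime divisors lying in $\mathcal{P}$, each of size $\geq Q:=B^{\frac{n+1}{n+2}}(\log B)^{\frac{1}{n+2}}$; so $g(k)=0$ or $|g(k)|\geq Q^{P/(4d)}$.

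To conclude I would simply compare sizes. For $\mathbf{x}\in[-B,B]^{n+1}$ one has $|F(\mathbf{x})|\ll_{e,n,\|F\|}B^{e}$, hence $|g(F(\mathbf{x}))|\ll_{d,e,n,\|f\|,\|F\|}B^{e(d-1)}$, a fixed power of $B$. On the other hand $P\gg(B/\log B)^{\frac{n+1}{n+2}}$, so $Q^{P/(4d)}=\exp\!\big(\tfrac{P}{4d}\log Q\big)$ grows faster than any fixed power of $B$; thus for $B\geq B_{0}(d,e,n,\|f\|,\|F\|)$ one gets $|g(F(\mathbf{x}))|<Q^{P/(4d)}$, forcing $g(F(\mathbf{x}))=0$, i.e. $F(\mathbf{x})$ is one of the at most $d-1$ integer roots $k_{1},\dots,k_{r}$ of $g$. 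Therefore $N\leq\sum_{j=1}^{r}\#\{\mathbf{x}\in\mathbb{Z}^{n+1}\cap[-B,B]^{n+1}:\ F(\mathbf{x})=k_{j}\}\ll_{d,e,n}B^{n}$, the last step being the standard bound for the number of integer points of an $(n{+}1)$-dimensional box lying on the hypersurface $\{F=k_{j}\}$ (peel off one variable at a time and induct, using that $F-k_{j}$ is a nonzero polynomial of degree $e$). For $B<B_{0}$ the inequality $|E(p,q,i,j)|\ll_{d}B^{n+1}\leq B_{0}B^{n}$ handles the bounded range, which finishes the lemma.

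The single non-routine point is the inclusion $S_{f,p}\subseteq\{s:g(s)\equiv 0\bmod p\}$ for all but finitely many $p$: one must check that modulo such $p$ the critical values of $f$ remain roots of the reduced resultant, i.e. that passing to $\mathbb{F}_p$ does not destroy the resultant relation (this is exactly the requirement that the degrees of $f(T)-s$ and $f'(T)$ in $T$ do not drop mod $p$). Everything after that is soft: $\prod_{p\in\mathcal{P}}p$ is super-polynomially large in $B$, so a polynomial-size integer such as $g(F(\mathbf{x}))$ cannot be a nonzero multiple of that many primes from $\mathcal{P}$, and the problem collapses to counting integer points on finitely many fibres of $F$.
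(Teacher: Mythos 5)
Your proposal is correct and follows essentially the same route as the paper: both reduce the bound to counting $\mathbf{x}$ with $F(\mathbf{x})$ in the sparse set $S$, then show that any $k\in S$ of polynomial size must be a critical value of $f$ because otherwise the discriminant of $f(T)-k$ (your $g(k)=\mathrm{Res}_{T}(f(T)-k,f'(T))$, the same quantity up to normalization) would be a nonzero integer of polynomial size divisible by $\gg P$ primes of size $\geq Q$, and finally count the $O_{d}(1)$ remaining fibres by a standard hypersurface point count. Your explicit treatment of the finitely many primes where reduction mod $p$ spoils the resultant relation is a point the paper glosses over, but the argument is the same.
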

\begin{proof}
Thanks to \cite[Theorem 1]{HB02} one has
\[
E(p,q,i,j)=\sum_{\substack{k\in S\\ \mathbf{x}\in [-B,B]^{n+1} :F(\mathbf{x})=k}}W(\mathbf{x})t_{i,p}(k)\overline{t}_{j,q}(k)\ll_{d,e,n,\|F\|} B^{n}\sum_{k\in S, k\leq M}1,
\]
where $M:=\max_{\mathbf{x}\in [-B,B]^{n+1}}|F(\mathbf{x})|\ll_{\|F\|} B^{e}$. We recall that $S=\{k: |\{p\in\mathcal{P}:(k \mod p\in S_{f,p})\}|\geq \frac{P}{2d\log P}\}$ and that $S_{f,p}=\{f(x):x\in\overline{\mathbb{F}}_{p}, f'(x)=0\}\cap\mathbb{F}_{p}$. Moreover, we denote by $V:=\{z\in\mathbb{C}:f'(z)=0\}$. Suppose that $k\in S$ and $k\notin f(V)\cap\mathbb{Z}$. This implies that the polynomial $f(T)-k$ is separable, i.e. it has $d$ distinct roots. Thus, the discriminant of $f(T)-k$, $\text{Dicr}_{T}(f(T)-k)$, is non zero. On the other hand, if $k\in S_{f,p}$ then $f(T)-k\mod p$ has multiple roots and then $p|\text{Dicr}_{T}(f(T)-k)$. Now let us denote $m:=\prod_{p:k\in S_{f,p}}p$. Since $m|\text{Dicr}_{T}(f(T)-k)$ and $\text{Dicr}_{T}(f(T)-k)\neq 0$ we conclude that $m\leq |\text{Dicr}_{T}(f(T)-k)|$. Moreover, the discriminant of a generic polynomial can be seen as an homogeneous polynomial in the coefficients of degree $2d-2$. Hence, $\text{Dicr}_{T}(f(T)-k)|\ll_{d,\|f\|}k^{2d-2}$ and this implies that $m^{\frac{1}{2d-2}}\ll_{d,\|f\|}k$. By definition $m\geq (B^{\frac{n+1}{n+2}})^{\frac{P}{2d\log P}}$, then $k\gg (B^{\frac{n+1}{n+2}})^{\frac{P}{(4d^{2}-4d)\log P}}$. Using the fact that $P\sim\Big(\frac{B}{\log B}\Big)^{\frac{n+1}{n+2}}$ we conclude that $k\gg B^{2e}$ for $B$ large enough. Hence,
\[
\sum_{k\in S, k\leq M}1=\sum_{k\in f(V)\cap\mathbb{Z}}1\leq d-1
\]
\end{proof}
Thus 
\[
\mathcal{V}_{f}(\mathcal{A})\ll_{d,e,n,\|f\|,\|F\|} B^{n+\frac{1}{n+1}}(\log B)^{\frac{n+1}{n+2}}, 
\]
as we wanted.
\newline\newline

\textit{Acknowledgments:} I am most thankful to my advisor, Emmanuel Kowalski, for suggesting this problem and for his guidance during these years. I thank Lillian Pierce for her suggestions, comments and for her careful reading of this paper, in particular for Lemma $\ref{lem : avesieve}$. Finally, I thank Ritabrata Munshi for suggesting the averaging argument in Lemma $\ref{lem : avesieve}$.

\bibliography{bibsivdel.bib}
\end{document}